\theoremstyle{plain}
\newtheorem{Them}{Theorem}[section]
\newtheorem{Lem}[Them]{Lemma}
\newtheorem{Cor}[Them]{Corollary}
\newtheorem{Que}[Them]{Question}
\theoremstyle{definition}
\newtheorem{Defi}[Them]{Definition}
\theoremstyle{remark}
\newtheorem{rem}[Them]{Remark}
\numberwithin{equation}{section}
\newcommand{\Ac}{\mathcal{A}}
\newcommand{\Bc}{\mathcal{B}}
\newcommand{\Cc}{\mathcal{C}}
\newcommand{\Dc}{\mathcal{D}}
\newcommand{\ZZ}{\mathbb{Z}}
\title{Characteristic quasi-polynomials of deletions of Shi arrangements of type C and type D}
\author{Akihiro Higashitani \thanks{Graduate School of Information Science and Techonology, Osaka University, Suita 565-0871, Japan. \\
Email: higashitani@ist.osaka-u.ac.jp} \and Masato Konoike \thanks{Graduate School of Information Science and Techonology, Osaka University, Suita 565-0871, Japan. \\ Email: kounoike-m@ist.osaka-u.ac.jp} \and Norihiro Nakashima \thanks{Department of Mathematics, Nagoya Institute of Technology, Aichi 466-8555, Japan. \\ Email: nakashima@nitech.ac.jp} \and Satoshi Ono \thanks{Department of Mathematics, Nagoya Institute of Technology, Aichi 466-8555, Japan. \\ Email: s.ono.341@nitech.jp}}
\date{}
\begin{document}

\maketitle

\begin{abstract}
Characteristic quasi-polynomials enumerate the number of points in the complement of hyperplane arrangements modulo positive integers. 
In this paper, we compute the characteristic quasi-polynomials of the restrictions of the Shi arrangements of type C and type D by one given hyperplane, respectively. 
The case of type C is established by extending the method developed in our previous work on type B (\cite{HN2024}), 
while the case of type D is deduced through a direct connection with the results on type B. 
As a corollary, we determine whether period collapse occurs in the characteristic quasi-polynomials of the deletions of the Shi arrangements of type C and type D. 
\end{abstract}

\tableofcontents

\section{Introduction}

In the study of hyperplane arrangements, the characteristic polynomial of a hyperplane arrangement $\mathcal{A}$ is a fundamental invariant that encodes various algebraic and combinatorial aspects of $\mathcal{A}$. 
Kamiya, Takemura, and Terao \cite{KTT08, KTT11} introduced its natural generalization, now known as the characteristic quasi-polynomial. 
They showed that, for a given arrangement $\mathcal{A}$, the number of points in its complement when considered over $(\mathbb{Z}/q\mathbb{Z})^m$ is given by a quasi-polynomial in $q$. The precise definition will be recalled below. 


Let $q\in\mathbb{Z}_{>0}$ and define $\mathbb{Z}_q:=\mathbb{Z}/q\mathbb{Z}$. 
For $a\in\mathbb{Z}$, let $[a]_q:=a+q\mathbb{Z}\in\mathbb{Z}_q$ be the $q$ reduction of $a$.
Note that as usual, we choose representatives $[a]_q \in \ZZ_q$ as integers with $0 \leq a \leq q-1$, and we often identify elements of $\ZZ_q$ with $\{a \in \ZZ \mid 0 \leq a \leq q-1\}$ in the sequel.   
An $m\times n$ integer matrix $A=(a_{ij})\in\mathrm{Mat}_{m\times n}(\mathbb{Z})$ and an integral vector $\bm{b}=(b_1,\dots,b_n)\in\mathbb{Z}^n$ define a hyperplane arrangement $\Ac_q=\{H_1[q],\dots,H_n[q]\}$ over $\mathbb{Z}_q$, where $H_i[q]=\{(x_1,\dots,x_m)\in\mathbb{Z}_q\mid [a_{1,i}]_q x_1+\cdots+[a_{m,i}]_q x_m=[b_i]_q\}$ for $i=1,\ldots,n$. 
The set $M(\mathcal{A}_q)$ is defined as the complement of $\mathcal{A}_q$, i.e., 
\begin{align*}
M(\mathcal{A}_q)&:=\mathbb{Z}_q^m\setminus\bigcup_{i=1}^n H_i[q] \\
&=\{(x_1,\dots,x_m)\in\mathbb{Z}_q^m\mid [a_{1,i}]_q x_1+\cdots+[a_{m,i}]_q x_m\neq [b_i]_q\ \text{for}\ i=1,\ldots,n\}.
\end{align*}
Kamiya, Takemura, and Terao proved in \cite{KTT08, KTT11} that the function $|M(\mathcal{A}_q)|$ becomes a monic quasi-polynomial, which we call the \textit{characteristic quasi-polynomial} of a hyperplane arrangement $\Ac$, with a period $\rho_A$, called the \textit{lcm period} of $\Ac$, where $\rho_A$ is a certain positive integer defined from the integer matrix $A$. 
(For the details of quasi-polynomials and their lcm periods, consult, e.g., \cite[Subsection 2.1]{HN2024}.)

Since the introduction of characteristic quasi-polynomials of hyperplane arrangements, those have been computed for several important classes (e.g. \cite{MN2024, Yoshinaga2018}) . 
We focus on the characteristic quasi-polynomials of Shi arrangements, which are the central object of this paper. 
\begin{Defi}[Shi arrangements of type B, type C, type D]
Given a positive integer $m$, let $[m]=\{1,2,\ldots,m\}$. 

The Shi arrangement $\mathcal{B}_m$ of type B is defined by
\begin{align*}
\mathcal{B}_m:=\{\{x_i=0\},\{x_i=1\}\mid i\in [m]\}\cup\{\{x_i\pm x_j=0\},\{x_i\pm x_j=1\}\mid 1\leq i<j\leq m\}, 
\end{align*}

The Shi arrangement $\mathcal{C}_m$ of type C is defined by
\begin{align*}
\mathcal{C}_m:=\{\{2x_i=0\},\{2x_i=1\}\mid i\in [m]\}\cup\{\{x_i\pm x_j=0\},\{x_i\pm x_j=1\}\mid 1\leq i<j\leq m\}.
\end{align*}

The Shi arrangement $\Dc_m$ of type D is defined by
\begin{align*}
\Dc_m:=\{\{x_i\pm x_j=0\},\{x_i\pm x_j=1\}\mid 1\leq i<j\leq m\}.
\end{align*}
\end{Defi}

\medskip

In \cite{HN2024}, the first author and the third author compute the characteristic quasi-polynomial of $\Bc_m$ as follows: 
\begin{align}\label{eq:typeB_Shi}
\left| M((\Bc_m)_q)\right| = (q-2m)^m. 
\end{align}
Note that this was essentially obtained in \cite[Theorem 5.1]{Yoshinaga2018}. (See Remark~\ref{rem-basis-transform} below.) 
On the other hand, $\left| M((\Cc_m)_q)\right|$ and $\left| M((\Dc_m)_q)\right|$ are not obtained there, so we compute them in this paper. 
The following is the first main result of this paper: 
\begin{Them}[See Theorems~\ref{thm-chara-quasi-TypeC} and \ref{thm-chara-quasi-TypeD}]\label{thm:Shi} We have 
\begin{equation}\label{eq:typeC_Shi} 
    |M((\Cc_m)_q)|=(q-2m)^m 
\end{equation}
and 
\begin{equation}\label{eq:typeD_Shi}
    |M((\Dc_m)_q)|=(q-2m+2)^m. 
\end{equation}
\end{Them}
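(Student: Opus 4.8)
The plan is to compute the two complement counts directly as the numbers of points of $\ZZ_q^m$ avoiding the listed congruences, exploiting that $\Cc_m$ and $\Dc_m$ share with $\Bc_m$ exactly the same system of root hyperplanes $\{x_i\pm x_j=0\},\{x_i\pm x_j=1\}$ and differ from it only in their short-root (coordinate) hyperplanes. In both cases the idea is to isolate the contribution of the short roots and feed the modified local data into the type B bookkeeping that already produced \eqref{eq:typeB_Shi}.

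For $\Cc_m$, to prove \eqref{eq:typeC_Shi}, I would rerun the argument of \cite{HN2024} unchanged except at the coordinate hyperplanes, where the type B condition $x_i\notin\{0,1\}$ is replaced by $[2]_q x_i\notin\{[0]_q,[1]_q\}$. The first step is to solve these congruences in $\ZZ_q$: if $q$ is odd then $2$ is invertible and exactly the two residues $x_i\in\{0,2^{-1}\}$ are excluded, while if $q$ is even then $2x_i\equiv 1$ is infeasible and $2x_i\equiv 0$ excludes the two residues $x_i\in\{0,q/2\}$. Thus in either parity precisely two residues per coordinate are removed, agreeing \emph{numerically} with type B, and the second step is to verify that, with these (parity-dependent) excluded sets substituted for $\{0,1\}$, their interaction with the unchanged conditions $x_i\pm x_j\neq 0,1$ still yields the product $(q-2m)^m$. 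I expect the main obstacle of the whole theorem to be exactly this even/odd dichotomy: the coefficient $2$ forces the lcm period $\rho_A$ to be even, so a priori $|M((\Cc_m)_q)|$ is a quasi-polynomial of period $2$, and one must show that its even and odd constituents both equal the single polynomial $(q-2m)^m$. Proving this coincidence is precisely the assertion that period collapse occurs, and it is the delicate point of the type C case.

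For $\Dc_m$, to prove \eqref{eq:typeD_Shi}, I would use that $\Bc_m=\Dc_m\cup\{\{x_i=0\},\{x_i=1\}\mid i\in[m]\}$ realizes $\Dc_m$ as the deletion of the coordinate hyperplanes from $\Bc_m$, and relate the complements by inclusion--exclusion over the coordinate conditions: $|M((\Bc_m)_q)|=\sum_{S\subseteq[m]}(-1)^{|S|}N_S$, where $N_S$ counts the points meeting all type D congruences together with $x_i\in\{0,1\}$ for each $i\in S$. The clean structural input is that $N_S=0$ whenever $|S|\geq 2$, since two coordinates in $\{0,1\}$ force one of $x_i\pm x_j$ into $\{0,1\}$ and thereby meet a type D hyperplane; the expansion therefore collapses to $|M((\Dc_m)_q)|=|M((\Bc_m)_q)|+\sum_{i=1}^m N_{\{i\}}$. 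Each $N_{\{i\}}$ is then evaluated by fixing the chosen coordinate at $0$ or $1$: taking the largest index and $x_m=0$ turns every remaining condition into a type $B_{m-1}$ condition, contributing $(q-2m+2)^{m-1}$ through \eqref{eq:typeB_Shi}, whereas $x_m=1$ yields a rank-$(m-1)$ count with an enlarged coordinate restriction to which the same vanishing principle applies once more. Feeding \eqref{eq:typeB_Shi} through these reductions and summing should assemble to $(q-2m+2)^m$. Here I expect the residual shifted counts (the $x_i=1$ contributions, together with the reindexing needed to treat every $i$ on the same footing, since the convention $i<j$ breaks naive coordinate symmetry) to be the main technical burden, as it is there that the reduction to the single type B formula must be forced through.
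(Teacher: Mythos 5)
Your type C plan is essentially the paper's own proof of Theorem~\ref{thm-chara-quasi-TypeC}: the paper reruns the box-and-circle counting of \cite{HN2024} split by the parity of $q$, with exactly the residue analysis you give (for odd $q$ the excluded residues are $0$ and $2^{-1}=(q+1)/2$, implemented by forbidding the lower right box; for even $q$ the equation $2x_s=1$ is infeasible and $2x_s=0$ excludes $\{0,q/2\}$, implemented by an extra unlabeled circle); the verification you defer is carried out in Sections~\ref{sec-TypeC-odd} and \ref{sec-TypeC-even}.

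Your type D route is genuinely different from the paper's, which proves Theorem~\ref{thm-chara-quasi-TypeD} by an explicit bijection $\phi\colon M((\Dc_m)_q)\to M((\Bc_m)_{q+2})$, $\bm{x}\mapsto \bm{x}+\bm{1}$ (with a coordinate equal to $0$, if any, sent to $q+1$), so that \eqref{eq:typeD_Shi} follows from \eqref{eq:typeB_Shi} at modulus $q+2$ with no case analysis. Your structural skeleton is correct and attractive: the identity $|M((\Dc_m)_q)|=|M((\Bc_m)_q)|+\sum_i N_{\{i\}}$ holds, and $N_S=0$ for $|S|\ge 2$ is right, since two coordinates in $\{0,1\}$ always force $x_i-x_j\in\{0,1\}$ or $x_i+x_j\in\{0,1\}$. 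The gap is in the evaluation of the boundary terms. Only the case $i=m$, $x_m=0$ reduces to \eqref{eq:typeB_Shi}. Because of the convention $i<j$, fixing $x_i=0$ with $i<m$ imposes $x_t\notin\{-1,0,1\}$ for all $t>i$, and fixing $x_i=1$ imposes $x_s\notin\{-1,0,1,2\}$ for $s<i$ and $x_t\notin\{-1,0,1\}$ for $t>i$. Already for $m=2$ the four boundary counts are $q-3$ (from $x_1=0$), $q-3$ (from $x_1=1$), $q-2$ (from $x_2=0$) and $q-4$ (from $x_2=1$): only one of them is a type B value, and the formula $(q-2)^2=(q-4)^2+4q-12$ emerges only after summation. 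A second application of your vanishing principle does hold for the extra excluded values $\{-1,2\}$ (any two coordinates with values there violate a type D condition), and it reduces these counts to type B counts with one coordinate pinned at $-1$ or at $2$; but such pinned counts are restrictions by hyperplanes that do not belong to $\Bc_{m-1}$, so they are given neither by \eqref{eq:typeB_Shi} nor by Theorem~\ref{typeB}, and a third round of the same trick fails outright: the pair $(x_u,x_{u'})=(-2,-1)$ with $u<u'$ violates no type D condition, so the higher inclusion-exclusion terms no longer vanish. Consequently these residual counts must be computed independently (e.g.\ by the box-counting method), which is exactly the kind of work the paper does in Lemmas~\ref{equality-xi=xj} and \ref{equality-xi=-xj+1} for its restriction theorems. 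Your strategy can very likely be completed along these lines, but not with \eqref{eq:typeB_Shi} as the sole input, whereas the paper's shift bijection avoids all of this bookkeeping.
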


\begin{rem}\label{rem-basis-transform}
Yoshinaga \cite[Theorem 5.1]{Yoshinaga2018} determined the characteristic quasi-polynomials for extended Shi arrangements as follows: 
\[\left|M((\mathcal{A}_\Phi^{[1-k,k]})_q)\right| = (q-kh)^m,\]
where $\mathcal{A}_\Phi^{[1-k,k]}$ denotes the extended Shi arrangement and $h$ denotes the Coxeter number. 
In particular, we know that 
\begin{align}\label{eq:Shi}
    \left|M((\mathcal{B}_\Phi^{[0,1]})_q)\right| = \left|M((\mathcal{C}_\Phi^{[0,1]})_q)\right| = (q-2m)^m \;\text{ and }\; \left|M((\mathcal{D}_\Phi^{[0,1]})_q)\right|=(q-2m+2)^m.
\end{align}
In \cite{Yoshinaga2018}, the Shi arrangements are defined as linear combinations of \textit{simple root bases}. 
On the other hand, in this paper, the Shi arrangements are defined as linear combinations of \textit{standard bases}. 
In the case of real arrangements, where the basis transformations are linear isomorphisms, or in the case of arrangements over $\mathbb{Z}_q$, 
where the basis transformations are unimodular, hyperplane arrangements that are transformed by the basis transformations are isomorphic. 
However, since the determinant of the transformation matrices of the simple roots and the standard basis is $2$ for the Shi arrangements of type C and type D, they are distinct as arrangements over $\mathbb{Z}_q$. 
Therefore, the characteristic quasi-polynomials considered in this paper may not necessarily coincide with that of the Shi arrangements represented by the basis of simple roots. 
More concretely, Theorem~\ref{thm:Shi} is essentially different from \eqref{eq:Shi}, but we eventually notice that those are the same. 
\end{rem}

We say that \textit{period collapse} occurs for a hyperplane arrangement $\mathcal{A}$ if the minimum period of its characteristic quasi-polynomial is strictly smaller than its lcm period. 
In \cite{HTY2023}, 
they claim that “any possible period collapse” can be realized as a non-central arrangement. 
For the (extended) Shi arrangements of type B, type C and type D, the equalities in Theorem~\ref{thm:Shi} imply that the characteristic quasi-polynomial reduces to an ordinary polynomial (minimum period $1$), whereas the lcm period is known to be $2$. 
Therefore, period collapse occurs in each of these arrangements.

\medskip

Motivated by these, we investigate the following question:
\begin{Que}[{See \cite[Question 1.2]{HN2024}}]\label{q}
Let $\mathcal{A}$ be a hyperplane arrangement and fix $H \in \mathcal{A}$. Under what conditions does period collapse occur in the deletion $\mathcal{A} \setminus {H}$?
\end{Que}
We completely answered this question for the Shi arrangement of type B in \cite{HN2024} as shown in Theorem~\ref{typeB}. 
Our goal of this paper is to answer this question completely for the Shi arrangements of type C or type D. 
For this purpose, we compute the characteristic quasi-polynomials of the restrictions $\mathcal{C}_m^H$ and $\mathcal{D}_m^H$ obtained by fixing a hyperplane $H$ in the corresponding arrangement. In the case of type C, the proof parallels the approach used in our previous work on type B, whereas for type D we exploit a close relationship with the type B result. 
Note that for a given hyperplane arrangement $\Ac$ defined by an integer matrix and an integral vector as above, if the greatest common divisor of the coefficients of the variables in the defining polynomial of $H$ is 1, then the following formula holds: 
\begin{align}\label{eq:del-res}
    |M(\mathcal{A}_q)|=\left|M\left(\mathcal{A}_q\setminus \{H_{q}\}\right)\right| - \left|M\left(\mathcal{A}_q^{H_{q}}\right)\right|, 
\end{align}
where $\Ac \setminus \{H\}$ (resp. $\Ac^H$) is a deletion (resp. a restriction) of $\Ac$ with respect to $H$ (see, e.g.,\cite[Corollary 4.2]{MN2024}). 
Namely, the computation of the characteristic quasi-polynomial of the deletion $\Ac \setminus \{H\}$ is equivalent to that of the restriction $\Ac^H$ if we know the characteristic quasi-polynomial of $\Ac$. 
In particular, by checking whether the characteristic quasi-polynomial of a restriction is a polynomial, we can check whether the characteristic quasi-polynomial of a deletion is a polynomial. 

We recall the following previous result, proved by the first and third authors. 
\begin{Them}[{\cite[Theorem 1.3]{HN2024}}]\label{typeB}
Fix $i$ and $j$ with $1\leq i<j \leq m$. Then 
\begin{align*}
\left|M((\Bc_m^{\{x_i=0\}})_q)\right|&= (T+1)^{m-i}(T+2)^{i-1}; \\
\left|M((\Bc_m^{\{x_i=1\}})_q)\right|&= T^{i-1}(T+1)^{m-i}; \\
\left|M((\Bc_m^{\{x_i-x_j=0\}})_q)\right|&=
\begin{cases}
(T+1)^{j-i-1}(T+2)^{m-j}((T+2)^i-(T+1)^{i-1}) \;\;&\text{if $q$ is odd}, \\
(T+1)^{j-i-1}(T+2)^{i-1}((T+2)^{m-j+1}-(T+1)^{m-j}) &\text{if $q$ is even}; 
\end{cases} \\
\left|M((\Bc_m^{\{x_i-x_j=1\}})_q)\right|&=T^{m+i-j}(T+1)^{j-i-1}; \\
\left|M((\Bc_m^{\{x_i+x_j=0\}})_q)\right|&=
\begin{cases}
T^{m-j}(T+1)^{j-i}((T+2)^{i-1}-(T+1)^{i-2}) \;\;&\text{if $q$ is odd}, \\
T^{m-j+1}(T+1)^{j-i-1}(T+2)^{i-1} &\text{if $q$ is even}; 
\end{cases} \\
\left|M((\Bc_m^{\{x_i+x_j=1\}})_q)\right|&=
\begin{cases}
T^{i-1}(T+1)^{j-i}(T+2)^{m-j} \;\;&\text{if $q$ is odd}, \\
T^{i-1}(T+1)^{j-i-1}((T+2)^{m-j+1}-(T+1)^{m-j}) &\text{if $q$ is even}; 
\end{cases}
\end{align*}
where $T:=q-2m$. 
\end{Them}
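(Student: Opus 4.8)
The plan is to realize each $|M((\mathcal{B}_m^H)_q)|$ as the number of $\mathbb{Z}_q$-points of $H$ that lie off every remaining hyperplane of $\mathcal{B}_m$. Because the coefficient gcd of each listed $H$ is $1$, the reduction $H(\mathbb{Z}_q)$ is a copy of $\mathbb{Z}_q^{m-1}$, and this point count is by definition the value of the characteristic quasi-polynomial of the restriction (consistent with \eqref{eq:del-res} and \eqref{eq:typeB_Shi}). Concretely, for each of the six shapes of $H$ I would solve its defining equation for one variable—$x_i=0$, $x_i=1$, $x_i=\pm x_j$, or $x_i=x_j\pm1$—and substitute into the defining forms of the remaining hyperplanes, turning the restricted complement into a point count in the $m-1$ surviving coordinates.

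First I would read off the induced arrangement. After substitution each surviving form is either one of the original type B Shi relations among the untouched coordinates (the bulk) or a relation that has collapsed onto a single surviving coordinate. The key bookkeeping is that $\mathcal{B}_m$ is not invariant under permuting or negating coordinates—the hyperplanes $\{x_i-x_j=1\}$ break this symmetry—so the collapsed relations pin extra forbidden residues onto specific coordinates according to their index relative to $i$ (and $j$). For instance, eliminating $x_i$ via $x_i=0$ turns $\{x_i-x_k=1\}$ into $\{x_k=-1\}$ exactly for $k>i$, so precisely the coordinates of index $>i$ gain one extra forbidden residue. Thus every restriction is a type B Shi arrangement on $m-1$ variables decorated with a few extra single-coordinate constraints whose positions are dictated by $i,j$, and this index dependence is what makes the individual formulas sensitive to $i$ and $j$.

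The count I would then evaluate by inclusion–exclusion over the extra single-coordinate constraints, equivalently by iterating deletion–restriction inside the decorated arrangement. The mechanism is that forcing a decorated coordinate to equal one of its extra forbidden residues again produces a type B Shi arrangement in fewer variables, whose complement is counted by \eqref{eq:typeB_Shi}; carried out in increasing index order, each undecorated coordinate contributes the baseline factor $T+2=q-2(m-1)$, while a coordinate carrying one or two extra residues contributes $T+1$ or $T$. This already yields the pure product formulas for the single-coordinate hyperplanes $\{x_i=0\},\{x_i=1\}$ and for the difference hyperplane $\{x_i-x_j=1\}$, the exponents of $T+2,T+1,T$ simply tallying the coordinates that inherit $0,1,2$ extra constraints. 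For the sum hyperplane $\{x_i+x_j=0\}$ the two families $\{x_i\pm x_k\}$ and $\{x_j\pm x_k\}$ impose distinct rather than coinciding constraints on each bulk coordinate, which is the source of the genuine $T$-factors there.

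The delicate cases are the four in which a sum or difference form collapses to a relation $2y=c$, namely $\{x_i\pm x_j=0\}$ and $\{x_i\pm x_j=1\}$. Here one coordinate $y$ absorbs the relations $2y=0$ and $2y=c'$, and the number of residues these kill depends on $\gcd(2,q)$: over odd $q$ each has a single root, whereas over even $q$ the equation $2y=0$ has the two roots $0,q/2$ while $2y=\text{odd}$ has none. I expect this to be the main obstacle. The coordinate $y$ is coupled to every bulk coordinate through the relations $\{y\pm x_k=0,1\}$, so its contribution is not a clean factor but a local inclusion–exclusion, and the single extra residue it carries is the self-negating value $q/2$ for even $q$ versus a non-self-negating value $(q\pm1)/2$ for odd $q$; this is exactly what survives as the difference-of-products corrections in the $\{x_i-x_j=0\}$, the $\{x_i+x_j=0\}$ (odd $q$), and the $\{x_i+x_j=1\}$ (even $q$) formulas, while for $\{x_i-x_j=1\}$ the even-$q$ loss of a root of $2y=-1$ is offset by the doubled root of $2y=0$ and the split cancels. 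Computing this coupled contribution of $y$ exactly—uniformly in the bulk coordinates and consistently across the parity split, so that it reproduces the stated $(T+2)^{a}-(T+1)^{b}$ corrections—is the step I would expect to demand the most care, and it is presumably the part of the method of \cite{HN2024} that the present paper goes on to adapt to type C.
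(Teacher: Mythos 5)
A preliminary remark on what there is to compare against: this paper never proves Theorem~\ref{typeB} at all --- it is imported verbatim from \cite{HN2024} --- and the proofs the paper does give for the analogous statements (Theorems~\ref{thm-char-quasi-TypeC-2xi=0}--\ref{thm-char-quasi-TypeC-xi=-xj+1} for type C, and Lemmas~\ref{equality-xi=xj}, \ref{equality-xi=-xj+1} for the type D reductions) all run through the Athanasiadis-style box-and-circle bijection of Subsection~\ref{sec-TypeC-counting}. Your setup agrees with that template: fix the defining equation of $H$, eliminate one variable, and list the induced constraint system. The genuine gap is in what you do next. Your entire count rests on the assertion that, processing coordinates in index order, each surviving coordinate contributes a clean factor $T+2$, $T+1$ or $T$ according to how many extra single-coordinate residues it has picked up. That multiplicativity \emph{is} the nontrivial content of the theorem: the coordinates remain coupled by $x_s \neq \pm x_t$ and $x_s \neq \pm x_t + 1$, so the set of excluded values for one coordinate depends on the values of the others (forbidden residues collide), and no product formula follows without an argument. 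The box-and-circle bijection is precisely the device that converts ``one choice of box per index'' into a correct global count; you have replaced it by a statement of the answer. Your proposed induction also does not close: forcing a decorated coordinate to equal an extra forbidden residue such as $-1$ does not produce ``a type B Shi arrangement in fewer variables'' counted by \eqref{eq:typeB_Shi} --- it produces further decorations (e.g.\ $x_s \neq 2$, $x_t \neq -2$ from the hyperplanes $x_s+x_k=1$ and $x_k - x_t=1$), so the reduction is circular. And for the cases you yourself flag as delicate ($x_i \pm x_j = 0,1$), you explicitly defer the computation, which is exactly where the parity-split corrections live.

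Moreover, your claim that the mechanism ``already yields'' the stated formula for $\{x_i=0\}$ is false on its face, and checking it would have exposed a real issue. By your own bookkeeping, restricting to $\{x_i=0\}$ decorates exactly the coordinates $x_k$ with $k>i$ (via $x_k \neq -1$), so your mechanism outputs $(T+2)^{i-1}(T+1)^{m-i}$, which is \emph{not} the quoted $(T+2)^{m-1}$ unless $i=m$. A direct check at $m=2$, $i=1$ gives the restriction complement $\{t \in \ZZ_q : t \neq 0,\, 1,\, -1\}$, of size $q-3 = T+1$, agreeing with the $i$-dependent expression and not with the quoted one; so under this paper's definition of $\Bc_m$ the first displayed formula cannot be taken at face value (most plausibly a transcription slip from \cite{HN2024}, and one that is harmless here since that formula is never used in this paper). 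Either way, you asserted agreement where there is a visible disagreement, which confirms the plan was not executed even in its easiest case. To turn the proposal into a proof you would need to supply the actual counting engine --- the bijection of Subsection~\ref{sec-TypeC-counting}, adapted to type B as in \cite{HN2024} --- and then resolve this discrepancy rather than inherit it.
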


The following theorems are our main results. 
\begin{Them}[See Theorems~\ref{thm-char-quasi-TypeC-2xi=0}, \ref{thm-char-quasi-TypeC-2xi=1}, \ref{thm-char-quasi-TypeC-xi=xj}, \ref{thm-char-quasi-TypeC-xi=xj+1}, \ref{thm-char-quasi-TypeC-xi=-xj} and \ref{thm-char-quasi-TypeC-xi=-xj+1}]\label{thm-main-typeC}
Fix $i$ and $j$ with $1\leq i<j \leq m$. Then 
\begin{align*}
\left|M({(\mathcal{C}_m^{\{2x_i=0\}})}_q))\right|&=
\begin{cases}
T^{m-i}(T+1)^{i-1}\;\;&\text{if $q$ is odd},\\
2T^{m-i}(T+1)^{i-1} &\text{if $q$ is even};\\
\end{cases} \\
\left|M({(\mathcal{C}_m^{\{2x_i=1\}})}_q))\right|&=
\begin{cases}
T^{i-1}(T+1)^{m-i}\;\;\;&\text{if $q$ is odd},\\
0 &\text{if $q$ is even};\\
\end{cases} \\
\left|M{((\mathcal{C}_m^{\{x_i - x_j=0\}})}_q)\right|&=(T+1)^{j-i-1}(T+2)^{m-j+i};\\
\left|M({(\mathcal{C}_m^{\{x_i - x_j=1\}})}_q)\right|&=T^{m-j+i}(T+1)^{j-i-1};\\
\left|M({(\mathcal{C}_m^{\{x_i + x_j=0\}})}_q)\right|&=
\begin{cases}
T^{m-j}(T+1)^{j-i}(T+2)^{i-1} \;\; &\text{if $q$ is odd},\\
T^{m-j}(T+1)^{j-i-1}(T+2)^i &\text{if $q$ is even};
\end{cases} \\
\left|M({(\mathcal{C}_m^{\{x_i + x_j=1\}})}_q)\right|&=
\begin{cases}
T^{i-1}(T+1)^{j-i}(T+2)^{m-j} \;\; &\text{if $q$ is odd},\\
T^i(T+1)^{j-i-1}(T+2)^{m-j} &\text{if $q$ is even}; 
\end{cases} 
\end{align*}
where $T:=q-2m$. 
\end{Them}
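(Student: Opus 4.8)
The plan is to treat each of the six hyperplane types in turn: in every case I would restrict the defining equations of $\Cc_m$ to the chosen wall $H$ and count, over $\mathbb{Z}_q$, the points of $H$ that avoid all remaining hyperplanes. This mirrors the strategy carried out for type B in \cite{HN2024}; the one genuinely new feature is the family of doubled walls $\{2x_i=c\}$, whose coefficients are not coprime and which force a systematic split according to the parity of $q$. The guiding principle throughout is that restricting to $H$ either pins down or identifies one coordinate, thereby collapsing many of the remaining equations onto one another; what survives is a lower-rank arrangement of essentially the same shape, whose complement can be enumerated by the same bookkeeping that produced Theorem~\ref{typeB}.

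For the difference-type walls $\{x_i-x_j=0\}$ and $\{x_i-x_j=1\}$, I would eliminate $x_i$ by the substitution $x_i=x_j+c$. Most equations involving $x_i$ then collapse onto equations already present among the $m-1$ surviving coordinates, so the induced arrangement is essentially a copy of $\Cc_{m-1}$ decorated with a few extra Shi-type walls contributed by the indices $k$ with $i<k<j$. Each such extra wall trades a factor $(T+2)$ for a factor $(T+1)$, which produces the exponent $j-i-1$, while the choice $c\in\{0,1\}$ fixes the base of the remaining factors. Because none of the surviving or extra walls involves the element of order two in $\mathbb{Z}_q$, these two counts are parity-independent, matching $(T+1)^{j-i-1}(T+2)^{m-j+i}$ and $T^{m-j+i}(T+1)^{j-i-1}$.

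The sum-type walls $\{x_i+x_j=0\}$ and $\{x_i+x_j=1\}$ are handled the same way, via $x_i=c-x_j$, but now the doubled equation $2x_i=\ast$ is sent to $2x_j=\ast$ with a sign, and whether its image coincides with an existing doubled wall or imposes a genuinely new constraint depends on the parity of $q$; this is exactly what splits the formulas for these two walls into odd and even cases. The doubled walls $\{2x_i=c\}$ themselves need a separate treatment, since the gcd of the coefficients of their defining forms is $2$ and hence \eqref{eq:del-res} does not apply; I would count $|M((\Cc_m^{\{2x_i=c\}})_q)|$ directly from the definition. For odd $q$ the scalar $2$ is a unit, so $\{2x_i=c\}$ pins $x_i$ to a single value and the surviving coordinates range over the complement of a rank-$(m-1)$ subarrangement, giving $T^{m-i}(T+1)^{i-1}$ for $c=0$ and $T^{i-1}(T+1)^{m-i}$ for $c=1$. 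For even $q$, the equation $\{2x_i=1\}$ has no solution in $\mathbb{Z}_q$, forcing the count to vanish, whereas $\{2x_i=0\}$ has exactly the two solutions $x_i\in\{0,q/2\}$.

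The main obstacle I anticipate is precisely the even-$q$ analysis, where the element $q/2$ of order two must be tracked through every surviving equation. For $\{2x_i=0\}$ with $q$ even one must show that the two choices $x_i=0$ and $x_i=q/2$ contribute equally; this should follow from the fact that the translation $x_k\mapsto x_k+q/2$ applied to all surviving coordinates fixes each wall $2x_k=\ast$, $x_k-x_l=\ast$, and $x_k+x_l=\ast$ (because $2\cdot(q/2)\equiv 0$), and carries the set of forbidden values attached to $x_i=0$ bijectively onto the one attached to $x_i=q/2$, thereby producing the factor $2$. For the sum-type walls the analogous task is to confirm that the parity-dependent images of the doubled equations assemble into exactly the shifted Shi pattern whose complement has the claimed size, with no unforeseen collisions among the walls; isolating these coincidences and matching them against the exponents in Theorem~\ref{thm-main-typeC} is the delicate part. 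The odd-$q$ and difference-wall cases, by contrast, should follow the type-B template of \cite{HN2024} with only routine modifications.
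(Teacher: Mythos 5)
Your plan follows essentially the same route as the paper: write out the defining conditions of each restriction over $\mathbb{Z}_q$, split according to the parity of $q$, and enumerate the solutions by the boxes-and-circles bookkeeping of \cite{HN2024}; all six formulas you anticipate agree with Theorems~\ref{thm-char-quasi-TypeC-2xi=0}--\ref{thm-char-quasi-TypeC-xi=-xj+1}, and your identification of the doubled conditions $2x_s\neq 0,1$ as the source of the parity splitting for the sum-type walls is exactly the mechanism in the paper's proofs. One ingredient of yours is genuinely different and cleaner: for $\{2x_i=0\}$ with $q$ even, the involution $\bm{x}\mapsto\bm{x}+\frac{q}{2}\bm{1}$ preserves every condition (all defining forms $2x_s$, $x_s-x_t$, $x_s+x_t$ annihilate the shift $\frac{q}{2}\bm{1}$ modulo $q$) and interchanges the solutions with $x_i=0$ and those with $x_i=q/2$, so the factor $2$ falls out of a symmetry; the paper instead counts the two subcases separately in Subsection~\ref{sec-TypeC-rest-2x_i=0-even} and only observes a posteriori that they contribute equally. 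Your remark that \eqref{eq:del-res} is unavailable for the doubled walls (gcd $=2$), so that a direct count is forced there, is also correct.

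One justification, however, is false as stated and must be replaced by the actual count. For the difference walls you argue that the formulas are parity-independent ``because none of the surviving or extra walls involves the element of order two in $\mathbb{Z}_q$.'' The restricted arrangement retains the conditions $2x_s\neq 0$ and $2x_s\neq 1$ for every coordinate, and when $q$ is even the first of these forbids precisely the order-two element $q/2$. Parity-independence is an outcome of the enumeration, not of the absence of such walls: for odd $q$ each coordinate loses the two values $0$ and $(q+1)/2$ and the lower-right box is forbidden, while for even $q$ each coordinate loses $0$ and $q/2$, there is one fewer box, but the lower-right box becomes available --- and these effects exactly cancel (compare Subsections~\ref{sec-TypeC-rest-x_i=x_j-odd} and \ref{sec-TypeC-rest-x_i=x_j-even}). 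Since your plan defers the enumeration to the type-B template anyway, this does not invalidate the approach, but the quoted sentence cannot stand as the argument for the third and fourth formulas; the same caveat applies to the slogan that each extra wall ``trades a factor $(T+2)$ for a factor $(T+1)$,'' which is only substantiated by carrying out the placement count.
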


\begin{Them}[See Theorems~\ref{chara-quasi-del-by-xi=-xj}, \ref{chara-quasi-del-by-xi=xj+1}, \ref{chara-quasi-del-by-xi=xj} and \ref{chara-quasi-del-by-xi=-xj+1}]\label{thm-main-typeD}
Fix $i$ and $j$ with $1\leq i<j \leq m$. Then 
    \begin{align*}
    \left|M((\Dc_m^{\{x_i-x_j=0\}})_q)\right|&=
    \begin{cases}
        (T+3)^{j-i-1}(T+4)^{m-j}((T+4)^i-(T+3)^{i-1})\\
        -(T+3)^{m-i-1}(T+4)^{i-1}\;\;\;& \text{if $q$ is odd},\\
        (T+3)^{j-i-1}(T+4)^{i-1}((T+4)^{m-j+1}-(T+3)^{m-j}) \\
        -(T+3)^{m-i-1}(T+4)^{i-1} & \text{if $q$ is even}; 
    \end{cases} \\
    \left|M((\Dc_m^{\{x_i-x_j=1\}})_q)\right|&=
    (T+2)^{m+i-j}(T+3)^{j-i-1}; \\
    \left|M((\Dc_m^{\{x_i+x_j=0\}})_q)\right|&=
    \begin{cases}
        (T+2)^{m-j}(T+3)^{j-i}((T+4)^{i-1}-(T+3)^{i-2}) \;\;&\text{if $q$ is odd}, \\
        (T+2)^{m-j+1}(T+3)^{j-i-1}(T+4)^{i-1} & \text{if $q$ is even}; 
    \end{cases} \\
    \left|M((\Dc_m^{\{x_i+x_j=1\}})_q)\right|&=
    \begin{cases}
        (T+2)^{i-1}(T+3)^{j-i}(T+4)^{m-j} - (T+2)^{i-1}(T+3)^{m-i-1} \;\;& \hspace{-0.3cm}\text{if $q$ is odd}, \\
    (T+2)^{i-1}(T+3)^{j-i-1}((T+4)^{m-j+1}-(T+3)^{m-j}) \\
    - (T+2)^{i-1}(T+3)^{m-i-1}& \hspace{-0.3cm} \text{if $q$ is even}; 
    \end{cases}
\end{align*}
where $T:=q-2m$. 
\end{Them}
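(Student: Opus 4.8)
The plan is to derive the type~D restriction formulas from the type~B formulas of Theorem~\ref{typeB} by exploiting the inclusion $\Dc_m\subset\Bc_m$. Since $\Bc_m=\Dc_m\cup\{\{x_k=0\},\{x_k=1\}\mid k\in[m]\}$, for any fixed type~D hyperplane $H$ the two restrictions are related by $\Bc_m^{H}=\Dc_m^{H}\cup\mathcal{E}$, where $\mathcal{E}$ is the collection of the images in $H$ of the $2m$ coordinate hyperplanes. Hence $M((\Bc_m^{H})_q)\subseteq M((\Dc_m^{H})_q)$, and the quantity I want exceeds the already known $|M((\Bc_m^{H})_q)|$ by exactly the number of points of $M((\Dc_m^{H})_q)$ lying on some hyperplane of $\mathcal{E}$. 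First I would fix, for each of the four shapes of $H$ (namely $\{x_i-x_j=0\}$, $\{x_i-x_j=1\}$, $\{x_i+x_j=0\}$ and $\{x_i+x_j=1\}$), an explicit unimodular parametrization of $H$ by $m-1$ coordinates, and write down both $\Dc_m^{H}$ and the images $\mathcal{E}$ of the coordinate hyperplanes in these coordinates.

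Next I would express the surplus by inclusion--exclusion over $\mathcal{E}$, the counting form of the deletion--restriction identity \eqref{eq:del-res}:
\[
|M((\Dc_m^{H})_q)|=|M((\Bc_m^{H})_q)|+\sum_{\emptyset\neq S\subseteq\mathcal{E}}(-1)^{|S|+1}\Bigl|M((\Dc_m^{H})_q)\cap\bigcap_{E\in S}E\Bigr|.
\]
Each intersection is the $\ZZ_q$-point count of $\Dc_m^{H}$ restricted further to the coordinate flat $\bigcap_{E\in S}E$, and hence lives in fewer variables. The decisive point is that, after a unimodular change of coordinates, each of these iterated restrictions reduces to a quantity governed by the formulas already established for types~B and~C; in particular the short-root hyperplanes $\{2x_i=c\}$ produced when the partner of $H$ is restricted tie these contributions to the type~C computation of Theorem~\ref{thm-main-typeC}. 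Summing the inclusion--exclusion and combining with the type~B value $|M((\Bc_m^{H})_q)|$ should reproduce the uniform substitution $T\mapsto T+2$ observed on comparing Theorems~\ref{typeB} and~\ref{thm-main-typeD}, while the finitely many non-cancelling terms (of degree $m-2$) account for the stated corrections $-(T+3)^{m-i-1}(T+4)^{i-1}$ and $-(T+2)^{i-1}(T+3)^{m-i-1}$.

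The four shapes of $H$ restrict the partner hyperplane and the coordinate hyperplanes differently, and this is the source of the case distinctions. For instance, restricting $\Dc_m$ to $\{x_i-x_j=0\}$ identifies $x_i$ with $x_j$, causing coordinate hyperplanes to coincide and turning the partner family $\{x_i+x_j=c\}$ into the short-root family $\{2x_i=c\}$, whereas restricting to $\{x_i-x_j=1\}$ or $\{x_i+x_j=1\}$ only translates the coordinate hyperplanes and creates an empty parallel partner. The resulting differences in the number and multiplicity of the hyperplanes of $\mathcal{E}$ and of the short-root families are exactly what make the correction term appear for $\{x_i-x_j=0\}$ and $\{x_i+x_j=1\}$ but not for the remaining two shapes. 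The parity of $q$ enters only through the short-root hyperplanes $\{2x_i=c\}$, whose number of $\ZZ_q$-points depends on the parity of $q$, and this accounts for the two-line formulas.

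The main obstacle is the combinatorial bookkeeping of the iterated restrictions: I must check that the full inclusion--exclusion collapses to the single substitution $T\mapsto T+2$ plus the explicit low-degree correction, rather than a more complicated expression. Concretely, the work consists of (i) identifying each iterated restriction of $\Dc_m^{H}$ to a coordinate flat, up to the unimodular coordinate changes under which $\ZZ_q$-point counts are invariant, so that Theorems~\ref{typeB} and~\ref{thm-main-typeC} may be applied, and (ii) verifying the extensive near-cancellations, separately for $q$ even and $q$ odd, that leave only the stated corrections --- in particular confirming that they vanish precisely for $\{x_i-x_j=1\}$ and $\{x_i+x_j=0\}$. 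I expect this reconciliation to be the technical heart of the proof; as a global consistency check, the resulting formulas can be tested against \eqref{eq:del-res} together with the value $|M((\Dc_m)_q)|=(q-2m+2)^m$ from Theorem~\ref{thm:Shi}.
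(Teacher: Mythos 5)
Your reduction via the inclusion $\Dc_m\subset\Bc_m$ is sound as far as it goes: indeed $M((\Bc_m^H)_q)\subseteq M((\Dc_m^H)_q)$, and the surplus is the number of points of $M((\Dc_m^H)_q)$ lying on the restricted coordinate hyperplanes; in fact your inclusion--exclusion collapses to a plain sum, since a point of $M((\Dc_m^H)_q)$ can lie on at most one member of $\mathcal{E}$ viewed as a subset of $H$ (two coordinates in $\{0,1\}$ would violate one of the surviving conditions $x_s\pm x_t\neq 0,1$, except when the two coordinate hyperplanes have the same image in $H$). The genuine gap is your claim that each surplus term is ``governed by the formulas already established for types B and C.'' It is not. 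A term $\bigl|M((\Dc_m^H)_q)\cap E\bigr|$ is the point count of a codimension-two restriction of $\Dc_m$; for example, with $H=\{x_i-x_j=1\}$ and $E$ the image of $\{x_k=0\}$, eliminating $x_k$ leaves a restriction of the arrangement $\Bc_{m-1}\cup\{\{x_t=-1\}\mid t>k\}$, i.e.\ a type B Shi arrangement \emph{augmented by translated coordinate hyperplanes}, restricted to a root hyperplane. Neither Theorem~\ref{typeB} nor Theorem~\ref{thm-main-typeC} covers such arrangements: they treat single restrictions of $\Bc_m$ and $\Cc_m$ only. So each of the (up to $2m$) surplus terms, in each of the four cases and for each parity of $q$, is a fresh counting problem, and the asserted collapse to ``the substitution $T\mapsto T+2$ plus the stated low-degree corrections'' is precisely the statement to be proved, not a consequence of anything you have established. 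As written, the proposal defers essentially the entire technical content.

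For comparison, the paper avoids this blow-up with one idea absent from your plan: the shift map $\phi(\bm{x})=\bm{x}+\bm{1}$ (with the convention that a zero coordinate is sent to $q+1$) is a bijection from $M((\Dc_m^H)_q)$ onto the complement \emph{modulo $q+2$} of $\Bc_m^H$ when $H\in\{\{x_i-x_j=1\},\{x_i+x_j=0\}\}$, and onto the complement modulo $q+2$ of $(\Bc_m\cup\{x_j=-1\})^H$ when $H\in\{\{x_i-x_j=0\},\{x_i+x_j=1\}\}$ (the extra hyperplane $\{x_j=-1\}$ records that $x_j=0$ is impossible on such points, so the value $q+1$ never occurs in coordinate $j$ of the image). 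This makes the substitution $T\mapsto T+2$ automatic rather than something to be verified, shows at once why no correction term occurs in the first two cases, and reduces each correction in the other two, via a single application of \eqref{eq:del-res}, to one new count (Lemmas~\ref{equality-xi=xj} and~\ref{equality-xi=-xj+1}), carried out by the box-and-circle method of Section~\ref{sec-TypeC}. Your same-modulus comparison could in principle be pushed through, but only by performing all of the codimension-two counts you have left unverified, which is a strictly larger task than the paper's two lemmas.
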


As corollaries of Theorems~\ref{thm-main-typeC} and \ref{thm-main-typeD}, we obtain the following: 
\begin{Cor}\label{period-collapse-1}
{\em (1)} Fix $H \in \Cc_m$. 
Then the characteristic quasi-polynomial of $\Cc_m \setminus \{H\}$ becomes a polynomial if and only if $H$ is one of the following: 
\begin{itemize}
    \item $H=\{x_i-x_j=0\}$ for $1 \leq i<j \leq m$;
    \item $H=\{x_i-x_j=1\}$ for $1 \leq i<j \leq m$.
\end{itemize}
{\em (2)} Fix $H \in \Dc_m$. 
Then the characteristic quasi-polynomial of $\Dc_m \setminus \{H\}$ becomes a polynomial if and only if $H$ is one of the following: 
\begin{itemize}
    \item $H=\{x_i-x_{m+1-i}=0\}$ for $1 \leq i \leq m$;
    \item $H=\{x_i-x_j=1\}$ for $1 \leq i<j \leq m$; 
    \item $H=\{x_1+x_j=0\}$ for $2 \leq j \leq m$; 
    \item $H=\{x_i+x_m=1\}$ for $1 \leq i \leq m-1$.  
\end{itemize}
\end{Cor}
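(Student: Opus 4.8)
The plan is to reduce the question of whether the deletion's characteristic quasi-polynomial is an honest polynomial to a parity comparison of the explicit restriction formulas in Theorems~\ref{thm-main-typeC} and \ref{thm-main-typeD}. The starting point is the deletion--restriction identity \eqref{eq:del-res}, which I rewrite as
\[
\left|M\left((\Cc_m \setminus \{H\})_q\right)\right| = \left|M((\Cc_m)_q)\right| + \left|M((\Cc_m^H)_q)\right|,
\]
and analogously for $\Dc_m$. By Theorem~\ref{thm:Shi} we have $|M((\Cc_m)_q)| = T^m$ and $|M((\Dc_m)_q)| = (T+2)^m$, which are genuine polynomials in $q$ (equivalently in $T = q-2m$). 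Hence the left-hand side is a polynomial if and only if the restriction term $\left|M((\Cc_m^H)_q)\right|$ (respectively $\left|M((\Dc_m^H)_q)\right|$) is a polynomial, i.e., if and only if its value for odd $q$ and its value for even $q$ coincide as polynomials in $T$. Since the lcm period is $2$, this is precisely the criterion for period collapse, so the whole corollary becomes a finite check, carried out one hyperplane class at a time.

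For type~C I would run through the six formulas of Theorem~\ref{thm-main-typeC}. The restriction formulas for $\{x_i - x_j = 0\}$ and $\{x_i - x_j = 1\}$ carry no parity case split, so they are polynomials outright. For the remaining four classes one compares the odd and even expressions directly: for $\{2x_i = 0\}$ they differ by a factor of $2$; for $\{2x_i = 1\}$ the even value is $0$ while the odd value is nonzero; and for $\{x_i + x_j = 0\}$ and $\{x_i + x_j = 1\}$ the odd and even expressions differ by a single factor of $(T+1)/(T+2)$ and $(T+1)/T$ respectively, so they never agree. This recovers exactly the two families in part~(1).

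The type~D analysis is the heart of the argument and the main obstacle, because three of the four restriction formulas in Theorem~\ref{thm-main-typeD} carry both a genuine parity split \emph{and} an extra subtracted term, so the comparison is no longer a matter of reading off a single ratio. For each of $\{x_i - x_j = 0\}$, $\{x_i + x_j = 0\}$ and $\{x_i + x_j = 1\}$ I would expand both the odd and the even expression, cancel the common polynomial factors, and reduce the equality ``odd $=$ even'' to a single monomial identity in the irreducible factors $T+3$ and $T+4$; matching exponents by unique factorization in $\ZZ[T]$ then pins down the condition. Concretely, $\{x_i - x_j = 0\}$ is a polynomial iff $i + j = m+1$ (the symmetric pairs $\{x_i - x_{m+1-i} = 0\}$), $\{x_i + x_j = 0\}$ iff the surviving identity forces $(T+4)^{i-1} = (T+3)^{i-1}$, i.e.\ $i = 1$, and $\{x_i + x_j = 1\}$ iff the identity reduces to $(T+4)^{m-j} = (T+3)^{m-j}$, i.e.\ $j = m$. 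The class $\{x_i - x_j = 1\}$ again has no parity split and is always a polynomial. Collecting these four outcomes yields precisely the families listed in part~(2).

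The only point requiring care is the bookkeeping of the index ranges at the boundary. For instance, the symmetric-pair condition $j = m+1-i$ must be intersected with the constraint $i < j$ imposed by $\Dc_m$, so that it genuinely describes the hyperplanes $\{x_i - x_{m+1-i} = 0\}$ with $i \leq \lfloor m/2 \rfloor$ (and excludes the degenerate $i = m+1-i$ when $m$ is odd); similarly one checks that the boundary instances $i = 1$ and $j = m$ are compatible with $1 \leq i < j \leq m$. Once the three monomial identities are set up correctly these verifications are routine, so the real effort lies in organizing the type~D expansions cleanly enough that the cancellations---and hence the exponent comparisons---are transparent.
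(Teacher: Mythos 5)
Your proposal is correct, and I verified the key reductions: for type D, the equality of odd and even constituents does collapse to $(T+4)^{m-j}(T+3)^{i-1}=(T+4)^{i-1}(T+3)^{m-j}$ (hence $i+j=m+1$) for $\{x_i-x_j=0\}$, to $(T+4)^{i-1}=(T+3)^{i-1}$ (hence $i=1$) for $\{x_i+x_j=0\}$, and to $(T+4)^{m-j}=(T+3)^{m-j}$ (hence $j=m$) for $\{x_i+x_j=1\}$, while $\{x_i-x_j=1\}$ has no parity split. Part (1) is essentially the paper's own argument: it too passes to restrictions via the deletion--restriction identity and compares the odd and even expressions of Theorem~\ref{thm-main-typeC} hyperplane by hyperplane. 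Part (2), however, is where you genuinely diverge: the paper does not expand the type D formulas at all, but instead observes that the bijections $\phi,\psi$ constructed in the proofs of Theorems~\ref{chara-quasi-del-by-xi=-xj}--\ref{chara-quasi-del-by-xi=-xj+1} show that the quasi-polynomial of $\Dc_m\setminus\{H\}$ is a polynomial if and only if that of $\Bc_m\setminus\{H\}$ is (the extra correction terms coming from the added hyperplane $\{x_j=-1\}$ are parity-independent), and then it imports the type B classification from \cite[Corollary 1.4]{HN2024}. Your route is more self-contained---it needs only the stated formulas of Theorem~\ref{thm-main-typeD} plus unique factorization in $\ZZ[T]$, and it makes the exact exponent-matching conditions transparent---whereas the paper's route avoids all computation at the cost of leaning on the structure of the type D proofs and an external corollary. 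One shared subtlety worth a remark in a final write-up: for $H=\{2x_i=0\}$ or $\{2x_i=1\}$ the gcd hypothesis of \eqref{eq:del-res} fails (the gcd is $2$), so you should justify the identity $\left|M((\Cc_m\setminus\{H\})_q)\right|=\left|M((\Cc_m)_q)\right|+\left|M((\Cc_m^H)_q)\right|$ by the direct set-theoretic partition of $M((\Cc_m\setminus\{H\})_q)$ according to whether a point lies on $H[q]$; this is immediate, and the paper glosses over the same point, so it is not a gap in your argument relative to theirs.
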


\begin{Cor}\label{period-collapse-2}
{\em (1)} Fix $H,H^{\prime} \in \Cc_m$ which are parallel each other. 
Then the characteristic quasi-polynomial of $\Cc_m \setminus \{H,H^{\prime}\}$ becomes a polynomial if and only if one of the following is satisfied: 
\begin{itemize}
    \item $H=\{2x_{(m+1)/2}=0\}$ and $H^{\prime}=\{2x_{(m+1)/2}=1\}$, where $m$ is odd;
    \item $H=\{x_i-x_j=0\}$ and $H^{\prime}=\{x_i-x_j=1\}$ for $1 \leq i <j \leq m$; 
    \item $H=\{x_i+x_{m+1-i}=0\}$ and $H^{\prime}=\{x_i+x_{m+1-i}=1\}$ for $1 \leq i \leq m$. 
\end{itemize}
{\em (2)} Fix $H,H^{\prime} \in \Dc_m$ which are parallel each other. 
Then the characteristic quasi-polynomial of $\Dc_m \setminus \{H,H^{\prime}\}$ becomes a polynomial if and only if one of the following is satisfied: 
\begin{itemize}
    \item $H=\{x_i-x_{m+1-i}=0\}$ and $H^{\prime}=\{x_i-x_{m+1-i}=1\}$ for $1 \leq i \leq m$; 
    \item $H=\{x_i+x_{m+1-i}=0\}$ and $H^{\prime}=\{x_i+x_{m+1-i}=1\}$ for $1 \leq i \leq m$. 
\end{itemize}
\end{Cor}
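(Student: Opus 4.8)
The plan is to reduce the deletion of the two parallel hyperplanes to the single restrictions already computed in Theorems~\ref{thm-main-typeC} and~\ref{thm-main-typeD}, and then to decide a polynomial identity in $T=q-2m$. First I would record a purely set-theoretic reduction valid for any pair $H,H'$ of parallel hyperplanes of $\Ac\in\{\Cc_m,\Dc_m\}$: since $H$ and $H'$ have the same linear part but distinct constant terms, $H_q\cap H'_q=\emptyset$ over $\ZZ_q$ for every $q\geq 2$. Writing $B$ for the union in $\ZZ_q^m$ of all hyperplanes of $\Ac$ other than $H$ and $H'$, the complement $M((\Ac\setminus\{H,H'\})_q)$ is exactly $\ZZ_q^m\setminus B$, and peeling off the two disjoint removed sheets gives
\[
\bigl|M((\Ac\setminus\{H,H'\})_q)\bigr| = \bigl|M(\Ac_q)\bigr| + \bigl|H_q\setminus B\bigr| + \bigl|H'_q\setminus B\bigr|.
\]
Because no point of $H_q$ lies on $H'_q$, we have $|H_q\setminus B|=|M((\Ac^{H})_q)|$ and likewise $|H'_q\setminus B|=|M((\Ac^{H'})_q)|$. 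This argument uses only $H_q\cap H'_q=\emptyset$, not the gcd-one hypothesis behind \eqref{eq:del-res}; this is essential for the pair $\{2x_i=0\},\{2x_i=1\}$, whose coefficients have gcd $2$ and for which $H_q$ is a union of two sheets when $q$ is even. By Theorem~\ref{thm:Shi} the term $|M(\Ac_q)|$ is an honest polynomial in $q$, so $|M((\Ac\setminus\{H,H'\})_q)|$ is a polynomial if and only if the sum $|M((\Ac^{H})_q)|+|M((\Ac^{H'})_q)|$ is. Each summand is an explicit element of $\ZZ[T]$ of lcm period dividing $2$, so the sum is a polynomial precisely when its $q$-odd and $q$-even expressions coincide in $\ZZ[T]$, and the statement reduces to one polynomial identity per parallel class.

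For type C there are three classes. For $H=\{x_i-x_j=0\},H'=\{x_i-x_j=1\}$ both restrictions are parity-independent, so the sum is automatically a polynomial (second bullet). For $H=\{2x_i=0\},H'=\{2x_i=1\}$ the odd-minus-even difference of the sum is $T^{i-1}(T+1)^{m-i}-T^{m-i}(T+1)^{i-1}$; as $T$ and $T+1$ are non-associate irreducibles, unique factorization in $\ZZ[T]$ forces $i-1=m-i$, i.e.\ $i=(m+1)/2$ with $m$ odd (first bullet). For $H=\{x_i+x_j=0\},H'=\{x_i+x_j=1\}$, after factoring out $(T+1)^{j-i-1}$ and telescoping with $(T+2)-(T+1)=(T+1)-T=1$, the difference collapses to $(T+1)^{j-i-1}\bigl(T^{i-1}(T+2)^{m-j}-T^{m-j}(T+2)^{i-1}\bigr)$, which vanishes iff $i-1=m-j$, i.e.\ $i+j=m+1$ (third bullet).

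For type D the two classes are treated the same way, although each restriction is now a difference of products of the three pairwise-distinct linear forms $T+2,T+3,T+4$, so more cancellation must be tracked; in every class the shared corrective term (equal in the odd and even formulas) drops out of the difference. For $H=\{x_i-x_j=0\},H'=\{x_i-x_j=1\}$ the parity-independent restriction of the ``$=1$'' hyperplane does not affect the difference, and telescoping leaves, up to a power of $T+3$, the clean monomial difference $(T+3)^{m-j}(T+4)^{i-1}-(T+3)^{i-1}(T+4)^{m-j}$, vanishing by unique factorization iff $m-j=i-1$, i.e.\ $j=m+1-i$. For $H=\{x_i+x_j=0\},H'=\{x_i+x_j=1\}$ the same procedure leaves, up to a power of $T+3$, the equal-degree difference
\[
(T+2)^{m-j}\bigl((T+4)^{i-1}-(T+3)^{i-1}\bigr)-(T+2)^{i-1}\bigl((T+4)^{m-j}-(T+3)^{m-j}\bigr),
\]
whose top-degree coefficient equals $(i-1)-(m-j)$; hence it vanishes iff $m-j=i-1$, i.e.\ $j=m+1-i$ (the boundary cases $i=1$ and $j=m$ being checked directly). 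These give both bullets of part~(2).

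The ``only if'' directions are immediate: whenever the stated index condition fails, the residual difference displayed above is a nonzero element of $\ZZ[T]$, so the odd and even polynomials differ and the minimum period of the characteristic quasi-polynomial equals its lcm period $2$, hence it is not a polynomial. I expect the main obstacle to be the type D computation for the class $\{x_i+x_j=0\},\{x_i+x_j=1\}$: there the two surviving products have the \emph{same} degree, so unique factorization alone does not settle the vanishing, and one must instead compare leading coefficients (and dispose of the boundary indices $i=1$, $j=m$ separately) to confirm that the difference is nonzero exactly when $i+j\neq m+1$.
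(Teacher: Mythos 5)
Your proposal is correct, and it splits naturally into two comparisons. For part (1) your argument is essentially the paper's own: both reduce polynomiality of $\left|M((\Cc_m\setminus\{H,H'\})_q)\right|$ to polynomiality of $\left|M((\Cc_m^{H})_q)\right|+\left|M((\Cc_m^{H'})_q)\right|$ (using that $\left|M((\Cc_m)_q)\right|$ is itself a polynomial), and then compare odd and even constituents class by class to get $2i=m+1$, the unconditional case $\{x_i-x_j\}$, and $i+j=m+1$; your telescoping identities and unique-factorization justifications match the paper's computations. One point where you are more careful than the paper: the paper runs the reduction through $(\Cc_m\setminus\{H\})^{H'}=\Cc_m^{H'}$ and the deletion--restriction formula \eqref{eq:del-res}, whose stated hypothesis (gcd of the coefficients equal to $1$) fails for the pair $\{2x_i=0\}$, $\{2x_i=1\}$; your purely set-theoretic peeling argument, needing only $H_q\cap H'_q=\emptyset$ and the convention that restriction complements are counted inside $\ZZ_q^m$, covers that class without caveat.

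For part (2) you take a genuinely different route. The paper never touches the type D formulas here: it argues (via the bijections $\phi,\psi$ of Section~\ref{sec-TypeD}, together with the fact that the correction terms in Lemmas~\ref{equality-xi=xj} and \ref{equality-xi=-xj+1} are honest polynomials) that the quasi-polynomial of $\Dc_m\setminus\{H,H'\}$ is a polynomial if and only if that of $\Bc_m\setminus\{H,H'\}$ is, and then quotes \cite[Corollary 1.6]{HN2024} as a black box. You instead compute directly from Theorem~\ref{thm-main-typeD}, and your algebra checks out: for the class $\{x_i-x_j=0\},\{x_i-x_j=1\}$ the odd-minus-even difference is $(T+3)^{j-i-1}\bigl((T+3)^{m-j}(T+4)^{i-1}-(T+3)^{i-1}(T+4)^{m-j}\bigr)$, settled by unique factorization, while for $\{x_i+x_j=0\},\{x_i+x_j=1\}$ it is $(T+3)^{j-i-1}$ times the equal-degree difference you display, whose coefficient in degree $(i-1)+(m-j)-1$ is $(i-1)-(m-j)$; both vanish exactly when $i+j=m+1$. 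Your approach is self-contained within the results stated in this paper, at the cost of heavier polynomial bookkeeping---and you correctly isolate the one genuinely delicate step, namely that in the last class unique factorization alone cannot decide the vanishing and a leading-coefficient comparison is required. The paper's approach buys brevity and avoids that algebra entirely, but imports the type B period-collapse classification from the earlier work rather than deriving the answer from the formulas proved here.
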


It is known that, in general, the characteristic polynomial of a hyperplane arrangement coincides with the first constituent of its characteristic quasi-polynomial (see \cite[Theorem 2.1]{Athanasiadis1999}).
For Shi arrangements of type B and type C, this constituent corresponds to the case where $q$ is odd.
By calculating the characteristic polynomials of restriction arrangements for all hyperplanes in Theorems~\ref{typeB} and \ref{thm-main-typeC}, we can see the following.
\begin{Cor}\label{Cor-latticeBC}
   If $m \geq 3$, then the intersection posets of the Shi arrangements of type B and type C are not isomorphic as posets.
\end{Cor}


\medskip

This paper is organized as follows. 
In Section~\ref{sec-TypeC}, we give a proof of Theorem~\ref{thm-main-typeC} by modifying the counting method, as performed for the Shi arrangement of type B in \cite{HN2024}. 
In Section~\ref{sec-TypeD}, we give a proof of Theorem~\ref{thm-main-typeD} by constructing a certain bijection between $M(\Dc_q^H)$ and $M(\Bc_{q+2}^H)$. 
For both of type C and type D, as an appetizer, we provide a proof of the characteristic quasi-polynomial of $\Cc_m$ and $\Dc_m$ in the beginning of Sections~\ref{sec-TypeC} and \ref{sec-TypeD}, respectively. 
Finally, in Section~\ref{sec:corollaries}, we give proofs of Corollaries~\ref{period-collapse-1}, \ref{period-collapse-2} and \ref{Cor-latticeBC}.

\section*{Acknowledgements}
A.H. is partially supported by KAKENHI JP24K00521 and JP23H00081.

\section{Shi arrangement of type C}\label{sec-TypeC}
The characteristic quasi-polynomials of the Shi arrangement of type C can be computed in the same way as those of type B described in \cite{HN2024}, using a modified version of the method obtained by Athanasiadis \cite{Athanasiadis1996}.

\subsection{The counting method}\label{sec-TypeC-counting}
Let $\Cc=\mathcal{C}_m$ be the Shi arrangement of type C.
In this subsection, we prove that $|M(\Cc_q)|=(q-2m)^m$ for any $q\gg 0$ by dividing the cases where $q$ is odd or even.
The complement $M(\Cc_q)$ is the set of elements $(x_1,\dots,x_m)\in\mathbb{Z}_q^m$ that satisfies the following conditions:
\begin{align*}
&2x_s\neq 0,\ 2x_s\neq 1\ (s\in [m]),\\
&x_s\neq x_t\ (s,t\in [m],\ s\neq t),\\
&x_s\neq x_t+1\ (s,t\in [m],\ s<t),\\
&x_s\neq -x_t,\ x_s\neq -x_t+1\ (s,t\in [m],\ s\neq t).
\end{align*}
In this subsection, we prove the following.
\begin{Them}\label{thm-chara-quasi-TypeC}
The characteristic quasi-polynomial of the Shi arrangement of type C is 
\begin{align*}
\left|M(\Cc_q)\right|=(q-2m)^m
\end{align*}
for any $q\in\mathbb{Z}$ with $q\gg 0$.
\end{Them}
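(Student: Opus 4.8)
The plan is to adapt the finite-field counting method of Athanasiadis \cite{Athanasiadis1996}, in the form already used for type B in \cite{HN2024}, and to split the computation according to the parity of $q$, since the coordinate conditions $2x_s \neq 0$ and $2x_s \neq 1$ behave differently in the two cases. First I would translate these conditions into a list of forbidden positions: for odd $q$ each $x_s$ must avoid $\{0,(q+1)/2\}$, whereas for even $q$ each $x_s$ must avoid $\{0,q/2\}$ (the equation $2x_s=1$ having no solution). The remaining conditions say precisely that the $2m$ values $x_1,-x_1,\dots,x_m,-x_m$ are pairwise distinct, so I would encode a point of $M(\Cc_q)$ as an $\iota$-symmetric placement of $2m$ labelled tokens on the cycle $\mathbb{Z}_q$, where the reflection $\iota\colon x\mapsto -x$ interchanges the token at $x_s$ with the token at $-x_s$. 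The avoided points above are then exactly, for even $q$, the two fixed points of $\iota$, and for odd $q$ the fixed point $0$ together with one additional non-fixed point.

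Next I would reinterpret the affine (``Shi'') conditions $x_s\neq x_t+1$ for $s<t$ and $x_s\neq -x_t+1$ for $s\neq t$ as forbidden adjacencies between consecutive occupied cells of the cycle. Following Athanasiadis, I would group the occupied cells into maximal runs of consecutive occupied positions (``blocks'') and show that, within each block, the constraints force the token labels to increase in the $+1$ direction; the count then reduces to distributing the $2m$ tokens among blocks, placing those blocks among the $q-2m$ empty cells, and choosing the admissible increasing labellings. The reflection $\iota$ pairs each block with its mirror block, so I would pass to a fundamental domain of $\iota$ and enumerate only ``half'' of each configuration, which is precisely the step responsible for producing the exponent $m$ rather than $2m-1$.

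Finally I would carry out the block enumeration in each parity and verify that both evaluate to $(q-2m)^m$ once $q$ is large enough that no spurious wrap-around collisions occur, which is the content of the hypothesis $q\gg 0$. I expect the principal obstacle to be the bookkeeping around the fixed point(s) of $\iota$: the block meeting the fixed point $0$ is self-paired under the reflection and must be handled separately, and in the odd case the extra forbidden point $(q+1)/2$ — which may be occupied by a token of the form $-x_s$ but not by $x_s$ — breaks the left-right symmetry of that central block. Reconciling this asymmetry of the odd case with the cleaner even case, and checking that both nevertheless collapse to the single monic expression $(q-2m)^m$, is the delicate part; everything else is the routine Athanasiadis block count already rehearsed for type B in \cite{HN2024}.
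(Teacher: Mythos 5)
Your proposal follows essentially the same route as the paper: Section~\ref{sec-TypeC-counting} carries out exactly this Athanasiadis-style count with the same parity split, and its two mirrored rows of boxes with unlabeled opposite circles are precisely your $\iota$-symmetric token placement cut down to a fundamental domain, the lower-right box being forbidden in the odd case because $(q+1)/2$ may carry a token $-x_s$ but never an $x_s$, and the extra unlabeled circle at $q/2$ accounting for the second fixed point in the even case. The only claim to tighten in execution is that labels ``increase along a block'': since $x_s = -x_t+1$ is excluded for all $s,t$ (including $s=t$, by $2x_s\neq 1$), a maximal occupied run consists of ascending labeled (positive) tokens followed by unlabeled mirror (negative) tokens rather than a single ascending sequence, which is exactly what the paper's box-and-circle structure encodes, and the enumeration then collapses to one free choice among $q-2m$ boxes per index, giving $(q-2m)^m$.
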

Note that the result of Theorem \ref{thm-chara-quasi-TypeC} coincides with \eqref{eq:Shi}, but a different one is computed in this paper (see Remark~\ref{rem-basis-transform}).

\subsubsection{The case where $q$ is odd}\label{sec-TypeC-odd}
In this subsection, we assume that $q$ is odd and count the number of elements $(x_1,\dots,x_m)\in M(\Cc_q)$ by creating boxes and circles similar to \cite[Section 2..4.1]{HN2024}.
To aid in understanding, we give an example for $m=5$ and $q=15$.

\begin{itemize}
\item[1.] Prepare $q-2m+1$ boxes side by side, $\displaystyle \frac{q+1}{2}-m$ on the upper side and $\displaystyle \frac{q+1}{2}-m$ on the lower side.
Then place each of the numbers $1,\dots,m$ corresponding to the indices of $x_1,\dots,x_m$ in one of $q-2m$ boxes, avoiding the lower right box.
Note that in the case of type B, the upper left box is avoided, but in the case of type C and $q$ is odd, the lower right box is avoided.
This is because the conditions $2x_s\neq 1$ are equivalent to $\displaystyle x_s\neq \frac{q+1}{2}$, respectively.
\begin{center}
\scalebox{0.8}{
\begin{tikzpicture}[main/.style = {draw, circle, very thick}] 
\draw [very thick] (0,0) rectangle (4,1); \draw [very thick] (5,0) rectangle (9,1); \draw [very thick] (10,0) rectangle (14,1);
\draw [very thick] (0,1.5) rectangle (4,2.5); \draw [very thick] (5,1.5) rectangle (9,2.5); \draw [very thick] (10,1.5) rectangle (14,2.5);
\node(1) at (5.5,2) {{\large $1$}}; \node(2) at (10.5,2) {{\large $2$}}; \node(3) at (2.5,0.5) {{\large $5$}}; \node(4) at (5.5,0.5) {{\large $3$}}; \node(5) at (6.5,0.5) {{\large $4$}};
\end{tikzpicture}
}
\end{center}
\item[2.]  Place unlabeled circles at the left edges of all boxes, except the lower left box.
Rewrite each number as a circle labeled with the same number.
The labeled circles in each of the upper boxes are placed in ascending order from left to right, next to the unlabeled circle.
Place the unlabeled circles on the opposite side of the labeled circles arranged in this manner.
After that, the labeled circles in each of the lower boxes are placed in descending order, starting from next of unlabeled circles placed in this way.
Also, place the unlabeled circles on the opposite side.
\item[3.] Arrange the circles clockwisely starting from the circle at the left end of the upper left box.
The left end circle of the upper left box corresponds to $0$, and other circles clockwisely correspond to the elements in $\{1,\dots,q-1\}$ in ascending order.
Create a tuple whose $i$-th entry is the element in $\{1,\dots,q-1\}$ corresponding to the circle labeled with $i$.
Then we obtain an element $(x_1,\dots,x_m)\in\mathbb{Z}_q^m$.
In the case of the example, $(x_1,x_2,x_3,x_4,x_5)=(3,7,10,11,14)$: 
\begin{center}
\scalebox{0.8}{
\begin{tikzpicture}[main/.style = {draw, circle, very thick}] 
\draw [very thick] (0,0) rectangle (4,1); \draw [very thick] (5,0) rectangle (9,1); \draw [very thick] (10,0) rectangle (14,1);
\draw [very thick] (0,1.5) rectangle (4,2.5); \draw [very thick] (5,1.5) rectangle (9,2.5); \draw [very thick] (10,1.5) rectangle (14,2.5);
\node[main](1) at (0.5,2) {{\color{white}\large $0$}}; \node[main](2) at (2.5,2) {{\color{white}\large $0$}};
\node[main](3) at (5.5,2) {{\color{white}\large $0$}}; \node[main](4) at (6.5,2) {{\large $1$}};\node[main](5) at (7.5,2) {{\color{white}\large $0$}}; \node[main](6) at (8.5,2) {{\color{white}\large $0$}};
\node[main](7) at (10.5,2) {{\color{white}\large $0$}}; \node[main](8) at (11.5,2) {{\color{white}\large $0$}};
\node[main](9) at (2.5,0.5) {{\large $5$}};
\node[main](10) at (5.5,0.5) {{\color{white}\large $0$}}; \node[main](11) at (6.5,0.5) {{\color{white}\large $0$}};\node[main](12) at (7.5,0.5) {{\large $4$}}; \node[main](13) at (8.5,0.5) {{\large $3$}};
\node[main](15) at (11.5,2) {{\large $2$}};\node[main](16) at (11.5,0.5) {{\color{white}\large $0$}};\node[main](16) at (10.5,0.5) {{\color{white}\large $0$}};
\node(0) at (0.5,3) {{\color{red}\large $0$}}; \node(1) at (2.5,3) {{\color{red}\large $1$}}; \node(2) at (5.5,3) {{\color{red}\large $2$}}; \node(3) at (6.5,3) {{\color{red}\large $3$}}; \node(4) at (7.5,3) {{\color{red}\large $4$}}; \node(5) at (8.5,3) {{\color{red}\large $5$}}; \node(6) at (10.5,3) {{\color{red}\large $6$}}; \node(7) at (11.5,3) {{\color{red}\large $7$}};
\node(8) at (11.5,-0.5) {{\color{red}\large $8$}}; \node(9) at (10.5,-0.5) {{\color{red}\large $9$}}; \node(10) at (8.5,-0.5) {{\color{red}\large $10$}}; \node(11) at (7.5,-0.5) {{\color{red}\large $11$}}; \node(12) at (6.5,-0.5) {{\color{red}\large $12$}}; \node(13) at (5.5,-0.5) {{\color{red}\large $13$}}; \node(14) at (2.5,-0.5) {{\color{red}\large $14$}};
\end{tikzpicture}
}
\end{center}
\end{itemize}
The tuple created by the above method satisfies the following conditions and is an element of $M(\Cc_q)$.
\begin{itemize}
\item The circle corresponding to the element $0\in\mathbb{Z}_q$ is always unlabeled and this corresponds to the condition $2x_s\neq 0$ ($\Leftrightarrow$ $x_s\neq 0$) for any $s\in [m]$.
The circle corresponding to the element $\displaystyle \frac{q+1}{2}\in\mathbb{Z}_q$ has no label since we avoid putting the numbers $1,\dots,m$ in the lower right box.
This corresponds to the condition $2x_s\neq 1$ for any $s\in [m]$.
\item Each circle is labeled with at most one number.
This corresponds to the condition that $x_s\neq x_t$ for any $s,t\in [m]$ with $s\neq t$.
\item The opposite circle of the labeled circle is always unlabeled.
This corresponds to the condition that $x_s\neq -x_t$ for any $s,t\in [m]$ with $s\neq t$.
\item The circle preceding the circle with the label $s$ in the clockwise direction does not have a label greater than $s$.
This corresponds to the condition that $x_s\neq x_t+1$ for any $s,t\in [m]$ with $s<t$.
\item The clockwise next circle from the opposite circle of the circle with the label $t$ is either an unlabeled circle or the circle with the label $t$ (that is, the same circle as the original).
This corresponds to the condition that $x_t\neq -x_s+1$ for any $s,t\in [m]$ with $s\neq t$.
\end{itemize}
Note that the conditions after the second item are the same as those for Type B.
Following this procedure in reverse, taking an element in $M(\Cc_q)$ corresponds to placing the numbers $1,\dots,m$ in the $q-2m$ boxes, except for the lower right box.
Therefore, we have $|M(\Cc_q)|=(q-2m)^m$.

\subsubsection{The case where $q$ is even}\label{sec-TypeC-even}
In this subsection, we assume that $q$ is even and extend the counting method to the case where $q$ is even.
Prepare $q-2m$ boxes side by side, $\displaystyle \frac{q}{2}-m$ on the upper side and $\displaystyle \frac{q}{2}-m$ on the lower side, and a circle corresponding to the element $\displaystyle \frac{q}{2}\in\mathbb{Z}_q$ on the right side of the boxes.
From the condition $2x_s\neq 0$ for any $s\in [m]$, there exists no index $s$ such that $\displaystyle x_s=\frac{q}{2}$.
In other words, the circle corresponding to $\displaystyle \frac{q}{2}\in\mathbb{Z}_q$ is unlabeled.
In addition, unlike the case where $q$ is odd, there does not exist $x_s\in\mathbb{Z}_q$ such that $2x_s=1$, so the numbers can also be placed in the lower right box.
Then, the numbers $1,\dots,m$ are placed in $q-2m$ boxes to create circles and boxes.
The boxes and circles created in this way correspond one-to-one to the elements of $M(\Cc_q)$ in the same way as in Section \ref{sec-TypeC-odd}.
Therefore, we have $|M(\Cc_q)|=(q-2m)^m$ in this case.
The following is an example of the boxes and circles corresponding to the element $(x_1,x_2,x_3,x_4,x_5)=(3,7,11,12,15)\in M(\Cc_q)$ in the case of $m = 5, q = 16$:
\begin{center}
\scalebox{0.8}{
\begin{tikzpicture}[main/.style = {draw, circle, very thick}] 
\draw [very thick] (0,0) rectangle (4,1); \draw [very thick] (5,0) rectangle (9,1); \draw [very thick] (10,0) rectangle (14,1);
\draw [very thick] (0,1.5) rectangle (4,2.5); \draw [very thick] (5,1.5) rectangle (9,2.5); \draw [very thick] (10,1.5) rectangle (14,2.5);
\node[main](1) at (0.5,2) {{\color{white}\large $0$}}; \node[main](2) at (2.5,2) {{\color{white}\large $0$}};
\node[main](3) at (5.5,2) {{\color{white}\large $0$}}; \node[main](4) at (6.5,2) {{\large $1$}};\node[main](5) at (7.5,2) {{\color{white}\large $0$}}; \node[main](6) at (8.5,2) {{\color{white}\large $0$}};
\node[main](7) at (10.5,2) {{\color{white}\large $0$}}; \node[main](8) at (11.5,2) {{\color{white}\large $0$}};
\node[main](9) at (2.5,0.5) {{\large $5$}};
\node[main](10) at (5.5,0.5) {{\color{white}\large $0$}}; \node[main](11) at (6.5,0.5) {{\color{white}\large $0$}};\node[main](12) at (7.5,0.5) {{\large $4$}}; \node[main](13) at (8.5,0.5) {{\large $3$}};
\node[main](15) at (11.5,2) {{\large $2$}};\node[main](16) at (11.5,0.5) {{\color{white}\large $0$}};\node[main](16) at (10.5,0.5) {{\color{white}\large $0$}};\node[main](17) at (15,1.25) {{\color{white}\large $0$}};
\node(0) at (0.5,3) {{\color{red}\large $0$}}; \node(1) at (2.5,3) {{\color{red}\large $1$}}; \node(2) at (5.5,3) {{\color{red}\large $2$}}; \node(3) at (6.5,3) {{\color{red}\large $3$}}; \node(4) at (7.5,3) {{\color{red}\large $4$}}; \node(5) at (8.5,3) {{\color{red}\large $5$}}; \node(6) at (10.5,3) {{\color{red}\large $6$}}; \node(7) at (11.5,3) {{\color{red}\large $7$}};
\node(8) at (11.5,-0.5) {{\color{red}\large $9$}}; \node(9) at (10.5,-0.5) {{\color{red}\large $10$}}; \node(10) at (8.5,-0.5) {{\color{red}\large $11$}}; \node(11) at (7.5,-0.5) {{\color{red}\large $12$}}; \node(12) at (6.5,-0.5) {{\color{red}\large $13$}}; \node(13) at (5.5,-0.5) {{\color{red}\large $14$}}; \node(14) at (2.5,-0.5) {{\color{red}\large $15$}};\node(17) at (15,2.25) {{\color{red}\large $8$}};
\end{tikzpicture}
}
\end{center}

\subsection{Characteristic quasi-polynomial of restriction on $\{2x_i=0\}$}\label{sec-TypeC-rest-2x_i=0}
Let $\Cc^{(1)}=\mathcal{C}_m^{\{2x_i=0\}}$ for $i\in [m]$, and we prove the first equality of Theorem \ref{thm-main-typeC} as follows.
\begin{Them}\label{thm-char-quasi-TypeC-2xi=0}
We have
\begin{align*}
\left|M(\Cc_q^{(1)})\right|=
\begin{cases}
(q-2m)^{m-i}(q-2m+1)^{i-1}\;\;\;&\text{if $q$ is odd}, \\
2(q-2m)^{m-i}(q-2m+1)^{i-1} &\text{if $q$ is even}
\end{cases}
\end{align*}
for any $q\in\mathbb{Z}$ with $q\gg 0$.
\end{Them}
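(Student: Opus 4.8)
The plan is to reduce the count to a constrained version of the box-and-circle bijection from Subsection~\ref{sec-TypeC-counting}, treating the cases $q$ odd and $q$ even separately. First I would determine the solutions of the defining equation $2x_i=0$ over $\ZZ_q$: if $q$ is odd then $2$ is a unit, so $\{2x_i=0\}$ amounts to the single condition $x_i=0$, whereas if $q$ is even there are exactly the two solutions $x_i\in\{0,q/2\}$. Hence in the even case $M(\Cc_q^{(1)})$ is the disjoint union over these two values of $x_i$, and the expected leading factor $2$ will follow once the two pieces are shown to be equinumerous.

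Next, having fixed the value of $x_i$, I would substitute it into the defining inequalities of $M(\Cc_q)$ and record which conditions on the remaining coordinates $(x_k)_{k\neq i}$ survive. The decisive point is a comparison, for each $k\neq i$, of the set of values of $x_k$ that are forbidden by pairing with the fixed $x_i$. Taking $x_i=0$ and $q$ odd for concreteness: besides the shared constraints $x_k\neq 0$ (from $2x_k\neq 0$), $x_k\neq (q+1)/2$ (from $2x_k\neq 1$), and $x_k\neq 1$ (from $x_k\neq -x_i+1$), the order-sensitive condition $x_s\neq x_t+1$ for $s<t$ contributes one further forbidden value, namely $x_k\neq q-1$, and it does so exactly for the indices $k>i$ (through $x_i\neq x_k+1$) and not for the indices $k<i$. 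This asymmetry between the $i-1$ indices below $i$ and the $m-i$ indices above $i$ is precisely what should produce the factors $(q-2m+1)^{i-1}$ and $(q-2m)^{m-i}$.

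Then I would rerun the encodings of Subsections~\ref{sec-TypeC-odd} and \ref{sec-TypeC-even} with the label $i$ frozen at the circle corresponding to the fixed value of $x_i$, and verify that placing the remaining $m-1$ labels subject to the surviving conditions is in bijection with independent placements in which each label indexed below $i$ has one more admissible box than each label indexed above $i$. Multiplying the per-label choices then gives $(q-2m)^{m-i}(q-2m+1)^{i-1}$ for a single admissible value of $x_i$, and summing over the one (resp.\ two) admissible values of $x_i$ when $q$ is odd (resp.\ even) yields the stated formula.

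I expect the main obstacle to be the rigorous verification of this adapted bijection. One must check that freezing the label $i$ at its circle is consistent with all of the circle conditions listed in Subsection~\ref{sec-TypeC-odd} (at most one label per circle, unlabeled opposite, clockwise-preceding, opposite-successor), and, more delicately, that the single extra forbidden value for the large indices translates into removing exactly one admissible box rather than perturbing the within-box orderings that make the count a clean product. In the even case there is the secondary point of confirming that $x_i=q/2$ is enumerated by the same expression as $x_i=0$, which I would obtain from the reflection symmetry of the circle picture (mapping each value $x$ to $-x$ or $x+q/2$), thereby justifying the factor $2$.
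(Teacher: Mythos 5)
Your proposal is correct and follows essentially the same route as the paper's proof in Subsection~\ref{sec-TypeC-rest-2x_i=0}: fix the admissible value(s) of $x_i$ ($x_i=0$ for $q$ odd, $x_i\in\{0,q/2\}$ for $q$ even), freeze the label $i$ at the corresponding circle, and rerun the box-and-circle count, where the asymmetry you identify (the extra forbidden value $q-1$ for indices $k>i$, coming from $x_i\neq x_k+1$) is exactly the paper's exclusion of one extra box for the labels $i+1,\dots,m$ (the lower left box when $x_i=0$, the upper right box when $x_i=q/2$). The only divergence is that for the sub-case $x_i=q/2$ the paper re-counts directly while you invoke a symmetry; this does work, but only via the translation $\bm{x}\mapsto \bm{x}+(q/2)\bm{1}$, which preserves all defining conditions of $\Cc_m$ over $\ZZ_q$, whereas the negation $x\mapsto -x$ you offer as an alternative does not (it violates the inhomogeneous conditions $x_s\neq x_t+1$, $s<t$, and in any case fixes rather than swaps the slice $x_i=0$).
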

The complement $M(\Cc^{(1)}_q)$ is the set of $(x_1,\dots,x_m)\in\mathbb{Z}_q^m$ that satisfies the following conditions:
\begin{align*}
&2x_i=0,\\
&2x_s\neq 0,\ 2x_s\neq 1\ (s\in [m],\ s\neq i),\\
&x_s\neq x_t\ (s,t\in [m],\ s\neq t),\\
&x_s\neq x_t+1\ (s,t\in [m],\ s<t),\\
&x_s\neq -x_t,\ x_s\neq -x_t+1\ (s,t\in [m],\ s\neq t).
\end{align*}
We fix the condition $2x_i=0$ and count the number of elements in $(x_1,\dots,x_m)\in M(\Cc^{(1)}_q)$ using a modified version of the counting method described in Section \ref{sec-TypeC-counting}.
We have $2x_i=0\ \Leftrightarrow\ x_i=0$ if $q$ is odd and $\displaystyle 2x_i=0\ \Leftrightarrow\ x_i\in \left\{0,\frac{q}{2}\right\}$ if $q$ is even.
Therefore, the condition $2x_i=0$ means that the leftmost circle in the upper left box has the label $i$ if $q$ is odd, and that either the leftmost circle in the upper left box or the circle on the right side of the boxes has the label $i$ if $q$ is even. 

\subsubsection{The case where $q$ is odd}\label{sec-TypeC-rest-2x_i=0-odd}
Let $q$ be odd.
Then we create boxes and circles corresponding to the element $(x_1,\dots,x_m)\in M(\Cc^{(1)}_q)$.
Prepare $q-2m+3$ boxes side by side, $\displaystyle \frac{q+1}{2}-(m-1)$ on the upper side and $\displaystyle \frac{q+1}{2}-(m-1)$ on the lower side.
Place the circle with the label $i$ at the left edge of the upper left box.
Then place each of the numbers $1,\dots,i-1,i+1,\dots,m$ in one of the $q-2m+3$ boxes, where each of the numbers cannot be placed in the lower right box, since $2x_s\neq 1$ for any $s\in [m]$.
In addition, each of the numbers cannot be placed in the upper left box, since $x_s\neq -x_i+1$ for $s\neq i$. 
(Note that $-x_i=x_i$ since $x_i=0$.) 
\begin{itemize}
\item There are $q-2m+1$ boxes that can contain the numbers $1,\dots,i-1$, except the upper left and lower right boxes.

\item Each of the numbers $i+1,\dots,m$ cannot be placed in the lower left box, since $x_i\neq x_s+1$ for $i<s$.
There are $q-2m$ boxes that can contain the numbers $i+1,\dots,m$, except the upper left, lower left, and lower right boxes.
\end{itemize}
Therefore, we have $|M(\Cc^{(1)}_q)|=(q-2m)^{m-i}(q-2m+1)^{i-1}$ in this case.
For example, the following boxes and circles correspond to the element $(x_1,x_2,x_3,x_4,x_5)=(5,10,0,11,6)\in M(\Cc^{(1)}_q)$ in the case of $m=5,q=13,i=3$:
\begin{center}
\scalebox{0.8}{
\begin{tikzpicture}[main/.style = {draw, circle, very thick}] 
\draw [very thick] (0,0) rectangle (4,1); \draw [very thick] (5,0) rectangle (9,1); \draw [very thick] (10,0) rectangle (14,1);
\draw [very thick] (0,1.5) rectangle (4,2.5); \draw [very thick] (5,1.5) rectangle (9,2.5); \draw [very thick] (10,1.5) rectangle (14,2.5);
\node[main](0) at (0.5,2) {{\large $3$}};  \node[main](3) at (12.5,0.5) {{\color{white}\large $0$}};
\node[main](4) at (5.5,2) {{\color{white}\large $0$}}; \node[main](5) at (6.5,2) {{\color{white}\large $0$}};
\node[main](6) at (10.5,2) {{\color{white}\large $0$}}; \node[main](7) at (11.5,2) {{\large $1$}};\node[main](1) at (7.5,2) {{\color{white}\large $0$}};  \node[main](2) at (7.5,0.5) {{\large $2$}};
\node[main](12) at (12.5,2) {{\large $5$}};
\node[main](11) at (5.5,0.5) {{\color{white}\large $0$}}; \node[main](10) at (6.5,0.5) {{\large $4$}};
\node[main](9) at (10.5,0.5) {{\color{white}\large $0$}}; \node[main](8) at (11.5,0.5) {{\color{white}\large $0$}};
\end{tikzpicture}
}
\end{center}

\subsubsection{The case where $q$ is even}\label{sec-TypeC-rest-2x_i=0-even}
Let $q$ be even.
We create boxes and circles corresponding to the element $(x_1,\dots,x_m)\in M(\Cc^{(1)}_q)$.
Since $\displaystyle 2x_i=0\ \Leftrightarrow\ x_i\in \left\{0,\frac{q}{2}\right\}$, we separate the cases $x_i=0$ and $\displaystyle x_i=\frac{q}{2}$.

(i) Let $x_i=0$.
Prepare $q-2m+2$ boxes side by side, $\displaystyle \frac{q}{2}-(m-1)$ on the upper side and $\displaystyle \frac{q}{2}-(m-1)$ on the lower side, and a circle corresponding to the element $\displaystyle \frac{q}{2}\in\mathbb{Z}_q$ on the right side of the boxes.
In this case, the circle at the left edge of the upper left box is the circle with the label $i$.
The circle on the right side of the boxes has no labels, since $2x_s\neq 0$ for $s\neq i$.
Place each of the numbers $1,\dots,i-1,i+1,\dots,m$ in one of the $q-2m+2$ boxes.
\begin{itemize}
\item There are $q-2m+1$ boxes that can contain the numbers $1,\dots,i-1$, except the upper left box.

\item Similarly to the case where $q$ is odd, each of the numbers $i+1,\dots,m$ cannot be placed in the upper left and lower left boxes.
There are $q-2m$ boxes that can contain the numbers $i+1,\dots,m$.
\end{itemize}
The number of elements of $M(\Cc^{(1)}_q)$ in this case is $(q-2m)^{m-i}(q-2m+1)^{i-1}$.

(i\hspace{-0.5mm}i) Let $\displaystyle x_i=\frac{q}{2}$.
Prepare $q-2m+2$ boxes side by side, $\displaystyle \frac{q}{2}-(m-1)$ on the upper side and $\displaystyle \frac{q}{2}-(m-1)$ on the lower side, and a circle with the label $i$ on the right side of the boxes.
If there exists $s\in[m]\setminus\{i\}$ such that $\displaystyle x_s=\frac{q}{2}+1$, then we have $-x_s+1=x_i$, a contradiction. 
Thus, each of the numbers $1,\dots,i-1,i+1,\dots,m$ cannot be placed in the lower right box.
\begin{center}
\scalebox{0.8}{
\begin{tikzpicture}[main/.style = {draw, circle, very thick}] 
\draw [very thick] (0,0) rectangle (3,1); \draw [very thick] (0,1.5) rectangle (3,2.5);
\node[main](0) at (4,1.25) {{\large $i$}};
\node[main](1) at (2.5,2) {{\color{white}\large $0$}}; \node[main](2) at (2.5,0.5) {{\large $s$}};
\end{tikzpicture}
}
\end{center}
\begin{itemize}
\item There are $q-2m+1$ boxes that can contain the numbers $1,\dots,i-1$, except the lower right box.

\item Each of the numbers $i+1,\dots,m$ cannot be placed in the upper right box, since $x_i\neq x_s+1$ for $i<s$.
There are $q-2m$ boxes that can contain the numbers $i+1,\dots,m$, except the upper right and lower right boxes.
\end{itemize}
The number of elements of $M(\Cc^{(1)}_q)$ in this case is also $(q-2m)^{m-i}(q-2m+1)^{i-1}$.
For example, the following boxes and circles correspond to the element $(x_1,x_2,x_3,x_4,x_5)=(1,9,10,7,2)\in M(\Cc^{(1)}_q)$ in the case of $m=5,q=14,i=3$:
\begin{center}
\scalebox{0.8}{
\begin{tikzpicture}[main/.style = {draw, circle, very thick}] 
\draw [very thick] (0,0) rectangle (4,1); \draw [very thick] (5,0) rectangle (9,1); \draw [very thick] (10,0) rectangle (14,1);
\draw [very thick] (0,1.5) rectangle (4,2.5); \draw [very thick] (5,1.5) rectangle (9,2.5); \draw [very thick] (10,1.5) rectangle (14,2.5);
\node[main](0) at (0.5,2) {{\color{white}\large $0$}};  \node[main](1) at (2.5,2) {{\large $1$}};
\node[main](2) at (3.5,2) {{\large $5$}};
\node[main](3) at (5.5,2) {{\color{white}\large $0$}}; \node[main](3) at (6.5,2) {{\color{white}\large $0$}};
\node[main](5) at (7.5,2) {{\color{white}\large $0$}};\node[main](6) at (10.5,2) {{\color{white}\large $0$}}; 
\node[main](7) at (15,1.25) {{\large $4$}};
\node[main](8) at (10.5,0.5) {{\color{white}\large $0$}};
\node[main](9) at (7.5,0.5) {{\large $2$}};\node[main](10) at (6.5,0.5) {{\large $3$}};
\node[main](11) at (5.5,0.5) {{\color{white}\large $0$}};
\node[main](12) at (3.5,0.5) {{\color{white}\large $0$}};\node[main](13) at (2.5,0.5) {{\color{white}\large $0$}};
\end{tikzpicture}
}
\end{center}
Combining (i) and (i\hspace{-0.5mm}i), we have $|M(\Cc^{(1)}_q)|=2(q-2m)^{m-i}(q-2m+1)^{i-1}$ if $q$ is even.

\subsection{Characteristic quasi-polynomial of restriction on $\{2x_i=1\}$}\label{sec-TypeC-rest-2x_i=1}
Let $\Cc^{(2)}=\mathcal{C}_m^{\{2x_i=1\}}$ for some $i\in [m]$, and we prove the second equality of Theorem \ref{thm-main-typeC} as follows.
\begin{Them}\label{thm-char-quasi-TypeC-2xi=1}
We have
\begin{align*}
\left|M(\Cc_q^{(2)})\right|=
\begin{cases}
(q-2m)^{i-1}(q-2m+1)^{m-i}\;\;\;&\text{if $q$ is odd},\\
0 &\text{if $q$ is even}
\end{cases}
\end{align*}
for any $q\in\mathbb{Z}$ with $q\gg 0$.
\end{Them}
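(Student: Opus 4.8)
The plan is to split on the parity of $q$, handling the even case by an emptiness argument and the odd case by reusing the box-and-circle count developed for $\{2x_i=0\}$ in Section~\ref{sec-TypeC-rest-2x_i=0}, with the forced cell relocated to the lower-right corner.

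\textbf{Even case.} First I would observe that the doubling map $x\mapsto 2x$ on $\mathbb{Z}_q$ has image $2\mathbb{Z}_q$, i.e.\ the even residues, which does not contain $1$ when $q$ is even. Hence the equation $2x_i=1$ defining the restricting hyperplane has no solution in $\mathbb{Z}_q$, so $M(\Cc^{(2)}_q)=\emptyset$ and $|M(\Cc^{(2)}_q)|=0$, as claimed.

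\textbf{Odd case.} Since $q$ is odd, $2$ is a unit in $\mathbb{Z}_q$, so $2x_i=1$ has the unique solution $x_i=\frac{q+1}{2}$. In the encoding of Section~\ref{sec-TypeC-odd}, the position $\frac{q+1}{2}$ is exactly that of the lower-right box, which was \emph{forbidden} in the unrestricted count (because $2x_s\neq 1 \Leftrightarrow x_s\neq\frac{q+1}{2}$). Thus imposing $2x_i=1$ forces the label $i$ into that previously-forbidden cell, and I would count the placements of the remaining labels $\{1,\dots,m\}\setminus\{i\}$ by the modified diagram, mirroring the treatment of $\{2x_i=0\}$. The heart of the matter is to record which cells become unavailable to the other labels once $x_i=\frac{q+1}{2}$ is fixed: substituting this value into the defining conditions of $M(\Cc^{(2)}_q)$ yields, besides the generic exclusions $x_s\neq 0$ and $x_s\neq\frac{q+1}{2}$, the new exclusion $x_s\neq -x_i=\frac{q-1}{2}$ for every $s\neq i$, together with the single further exclusion $x_s\neq x_i+1=\frac{q+3}{2}$ \emph{only} for the labels $s<i$. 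This last exclusion comes from the asymmetric relation $x_s\neq x_t+1$ with $t=i$, which requires $s<i$, and has no counterpart for $s>i$. Consequently the labels $1,\dots,i-1$ may each be placed into $q-2m$ boxes, while the labels $i+1,\dots,m$ may each be placed into $q-2m+1$ boxes; taking the product over the $i-1$ labels below $i$ and the $m-i$ labels above $i$ gives $(q-2m)^{i-1}(q-2m+1)^{m-i}$.

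The step I expect to be the main obstacle is verifying rigorously that the clockwise circle construction, after the forced placement of $i$ at $\frac{q+1}{2}$, still yields a bijection onto the points of $M(\Cc^{(2)}_q)$. In particular, one must confirm that the asymmetric bookkeeping of forbidden cells (the mirror image of the $\{2x_i=0\}$ situation) is exactly right and that, for $q\gg 0$, the special positions $0,\frac{q-1}{2},\frac{q+1}{2},\frac{q+3}{2}$ remain distinct so that no forbidden cells are double-counted. Once this bijection is established, the count is immediate.
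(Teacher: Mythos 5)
Your proposal is correct and follows essentially the same route as the paper: the even case is dispatched by the emptiness of $\{2x_i=1\}$ in $\mathbb{Z}_q$, and the odd case forces the label $i$ onto the position $\frac{q+1}{2}$ (right edge of the lower right box) and counts placements of the remaining labels, with the lower right box forbidden exactly for the labels $s<i$ (via $x_s\neq x_i+1$) and available for $s>i$, yielding $(q-2m)^{i-1}(q-2m+1)^{m-i}$. The bookkeeping of excluded positions ($0$, $\frac{q-1}{2}$, $\frac{q+1}{2}$, and $\frac{q+3}{2}$ for $s<i$ only) matches the paper's box-and-circle argument.
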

The complement $M(\Cc^{(2)}_q)$ is the set of $(x_1,\dots,x_m)\in\mathbb{Z}_q^m$ that satisfies the following conditions:
\begin{align*}
&2x_i=1,\\
&2x_s\neq 0,\ 2x_s\neq 1\ (s\in [m],\ s\neq i),\\
&x_s\neq x_t\ (s,t\in [m],\ s\neq t),\\
&x_s\neq x_t+1\ (s,t\in [m],\ s<t),\\
&x_s\neq -x_t,\ x_s\neq -x_t+1\ (s,t\in [m],\ s\neq t).
\end{align*}
We fix the condition $2x_i=1$ and count the number of elements in $(x_1,\dots,x_m)\in M(\Cc^{(2)}_q)$ similarly to the previous subsection. 
If $q$ is even, since $\{\bm{x}\in\mathbb{Z}_q^m\mid 2x_i=1\} = \emptyset$, we immediately obtain $\left|M(\Cc^{(2)})\right|=0$.

\medskip
In what follows, let $q$ be odd. 
Since $\displaystyle 2x_i=1\ \Leftrightarrow\ x_i=\frac{q+1}{2}$, the condition $2x_i=1$ means that the circle at the right edge of the lower right box has the label $i$.

Prepare $q-2m+1$ boxes side by side, $\displaystyle \frac{q+1}{2}-m$ on the upper side and $\displaystyle \frac{q+1}{2}-m$ on the lower side.
Place the circle with the label $i$ at the right edge of the lower right box and the unlabeled circles on the opposite side of the circle with the label $i$.
Then, according to the following procedure, place each of the numbers $1,\dots,i-1,i+1,\dots,m$ in one of the $q-2m+1$ boxes.
\begin{itemize}
\item Each of the numbers $1,\dots,i-1$ cannot be placed in the lower right box, since $x_s\neq x_i+1$ for $s<i$.
There are $q-2m$ boxes that can contain the numbers $1,\dots,i-1$, except the lower right boxes.

\item Since the numbers $i+1,\dots,m$ can be placed in all boxes, there are $q-2m+1$ boxes that can contain the numbers $i+1,\dots,m$.
\end{itemize}
Therefore, we have $|M(\Cc^{(2)}_q)|=(q-2m)^{i-1}(q-2m+1)^{m-i}$ in this case.
For example, the following boxes and circles correspond to the element $(x_1,x_2,x_3,x_4,x_5)=(1,3,4,8,9)\in M(\Cc^{(2)}_q)$ in the case of $m=5,q=15,i=4$:
\begin{center}
\scalebox{0.8}{
\begin{tikzpicture}[main/.style = {draw, circle, very thick}] 
\draw [very thick] (0,0) rectangle (4,1); \draw [very thick] (5,0) rectangle (9,1); \draw [very thick] (10,0) rectangle (14,1);
\draw [very thick] (0,1.5) rectangle (4,2.5); \draw [very thick] (5,1.5) rectangle (9,2.5); \draw [very thick] (10,1.5) rectangle (14,2.5);
\node[main](0) at (0.5,2) {{\color{white}\large $0$}};  \node[main](1) at (1.5,2) {{\large $1$}};
\node[main](2) at (5.5,2) {{\color{white}\large $0$}}; \node[main](3) at (6.5,2) {{\large $2$}};
\node[main](4) at (7.5,2) {{\large $3$}}; \node[main](5) at (10.5,2) {{\color{white}\large $0$}};\node[main](6) at (11.5,2) {{\color{white}\large $0$}}; 
\node[main](7) at (13.5,2) {{\color{white}\large $0$}};\node[main](8) at (13.5,0.5) {{\large $4$}};
\node[main](9) at (11.5,0.5) {{\large $5$}}; \node[main](10) at (10.5,0.5) {{\color{white}\large $0$}};
\node[main](11) at (7.5,0.5) {{\color{white}\large $0$}}; \node[main](12) at (6.5,0.5) {{\color{white}\large $0$}};
\node[main](13) at (5.5,0.5) {{\color{white}\large $0$}};\node[main](14) at (1.5,0.5) {{\color{white}\large $0$}};
\end{tikzpicture}
}
\end{center}

\subsection{Characteristic quasi-polynomial of restriction on $\{x_i-x_j=0\}$}\label{sec-TypeC-rest-xi=xj}
Let $\Cc^{(3)}=\mathcal{C}_m^{\{x_i-x_j=0\}}$ for $1\leq i<j \leq m$, and we prove the third equality of Theorem \ref{thm-main-typeC} as follows.
\begin{Them}\label{thm-char-quasi-TypeC-xi=xj}
We have
\begin{align*}
\left|M(\Cc_q^{(3)})\right|=(q-2m+1)^{j-i-1}(q-2m+2)^{m-j+i}
\end{align*}
for any $q\in\mathbb{Z}$ with $q\gg 0$.
\end{Them}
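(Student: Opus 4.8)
The plan is to adapt the box-and-circle counting of Subsection~\ref{sec-TypeC-counting} to incorporate the extra constraint $x_i=x_j$. First I would write down the complement explicitly: $M(\Cc_q^{(3)})$ consists of those $(x_1,\dots,x_m)\in\mathbb{Z}_q^m$ satisfying $x_i=x_j$ together with all of the defining conditions of $M(\Cc_q)$ except $x_i\neq x_j$. Writing $v:=x_i=x_j$, the effect is that the two labels $i$ and $j$ are forced onto a single (``merged'') circle, so that one now places $m-1$ entities into the boxes, namely the merged label $\{i,j\}$ and the $m-2$ singletons indexed by $[m]\setminus\{i,j\}$. Since $v$ itself is not fixed to a particular value, this merged label ``floats'': it may be placed in any box, behaving like an ordinary label that happens to carry two indices.

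The conceptual core is that merging $i$ and $j$ collapses the count to an $(m-1)$-entity version of Subsection~\ref{sec-TypeC-counting}: each of the $m-1$ entities occupies one circle and forces its antipode to remain empty, just as a single label does, so that generically an entity has $q-2(m-1)=T+2$ admissible boxes. The only discrepancy arises from the conditions $x_s\neq x_t+1$ ($s<t$) read across the merged circle, and here I would classify the remaining labels by their position relative to the pair $(i,j)$. For an \emph{outside} label $s<i$ or $s>j$, the two inequalities inherited from the pair coincide (both reading $x_s\neq v+1$ when $s<i$, resp. $x_s\neq v-1$ when $s>j$), forbidding a single circle adjacent to $v$; for an \emph{inside} label $i<s<j$ the inequalities $x_i\neq x_s+1$ and $x_s\neq x_j+1$ are distinct, forbidding \emph{both} circles adjacent to $v$. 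Hence each inside label loses exactly one further box. Consequently the merged label and the $(i-1)+(m-j)$ outside labels each admit $T+2$ boxes, while the $j-i-1$ inside labels each admit $T+1$, and multiplying yields
\[
(T+2)^{(i-1)+(m-j)+1}(T+1)^{j-i-1}=(T+1)^{j-i-1}(T+2)^{m-j+i}.
\]

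I would carry out the bookkeeping separately for $q$ odd and $q$ even, following Subsections~\ref{sec-TypeC-odd} and \ref{sec-TypeC-even}: in the odd case the forbidden circle $(q+1)/2$ removes one box in the usual way, while in the even case the extra circle for $q/2$ together with the absence of a $2x_s=1$ constraint plays the analogous role, and one verifies the two layouts give identical per-label counts (so that, unlike the restriction on $\{2x_i=0\}$, no parity-dependent factor of $2$ appears, because $v$ already ranges freely). The step I expect to be the main obstacle is the precise verification that, after the merged circle is placed, each remaining label has exactly $T+2$ (outside) or $T+1$ (inside) admissible boxes; in particular one must confirm that the antipodal constraints $x_s\neq -x_t$ and $x_s\neq -x_t+1$ attached to the merged circle introduce no blocked boxes beyond those already counted, and that the odd and even set-ups genuinely produce the same polynomial. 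Exhibiting a small worked example, as elsewhere in Section~\ref{sec-TypeC}, would then make the bijection between admissible box fillings and elements of $M(\Cc_q^{(3)})$ transparent.
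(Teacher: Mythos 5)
Your proposal is correct and takes essentially the same approach as the paper: the paper also merges $i$ and $j$ into a single circle labeled by the pair $(i,j)$ in the box-and-circle counting, and distinguishes the labels $s<i$ and $s>j$ (which may share the pair's box, since only one circle adjacent to the merged one is forbidden) from the labels $i<s<j$ (which cannot, since both adjacent circles are forbidden), with the same odd/even bookkeeping. Your per-entity counts of $T+2$ for the merged label and outside labels and $T+1$ for inside labels reproduce exactly the paper's sequential placement argument.
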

The complement $M(\Cc^{(3)}_q)$ is the set of $(x_1,\dots,x_m)\in\mathbb{Z}_q^m$ that satisfies the following conditions:
\begin{align*}
&x_i=x_j,\\
&2x_s\neq 0,\ 2x_s\neq 1\ (s\in [m]),\\
&x_s\neq x_t\ (s,t\in [m],\ s\neq t,\ \{s,t\}\neq \{i,j\}),\\
&x_s\neq x_t+1\ (s,t\in [m],\ s<t,\ \{s,t\}\neq \{i,j\}),\\
&x_s\neq -x_t,\ x_s\neq -x_t+1\ (s,t\in [m],\ s\neq t).
\end{align*}
We fix the condition $x_i=x_j$ and count the number of elements in $(x_1,\dots,x_m)\in M(\Cc^{(3)}_q)$. 
The condition $x_i=x_j$ means that a circle has the label $i$ and the label $j$ at the same time.
We may consider the number $i$ as the pair $(i,j)$ in this case.

\subsubsection{The case where $q$ is odd}\label{sec-TypeC-rest-x_i=x_j-odd}
Let $q$ be odd.
Prepare $q-2m+3$ boxes side by side, $\displaystyle \frac{q+1}{2}-(m-1)$ on the upper side and $\displaystyle \frac{q+1}{2}-(m-1)$ on the lower side.
Place each of the numbers $1,\dots,j-1,j+1,\dots,m$ in one of the $q-2m+3$ boxes and create boxes and circles corresponding to the element $(x_1,\dots,x_m)\in M(\Cc^{(3)}_q)$.
From the condition $2x_s\neq 1$ for $s\in[m]$, the number $s$ cannot be placed in the lower right box.
\begin{itemize}
\item We first place the number $i$.
There are $q-2m+2$ boxes that can contain the number $i$, except the lower right box.
\item Next, we place the numbers $1,\dots,i-1$. Let $s<i$.
If the number $s$ is placed in the same box as the number $i$, then the circle with the label $s$ cannot be placed immediately after the circle with the label $i$ in the clockwise direction, since $x_s\neq x_i+1$ for $s<i$.
However, the circle with the label $s$ can be placed immediately before the circle with the label $i$ in the clockwise direction.
Therefore, the number $s$ can be placed in the same box as the number $i$.
There are $q-2m+2$ boxes that can contain the numbers $1,\dots,i-1$, except the lower right box.
\item Then, we place the numbers $i+1,\dots,j-1$. Let $i<s<j$.
The circle with the label $s$ cannot be placed immediately before and immediately after the circle with the label $i$ in the clockwise direction, since $x_s\neq x_i+1$ and $x_j\neq x_s+1$ for $i<s<j$.
Therefore, the number $s$ cannot be placed in the same box as the number $i$.
There are $q-2m+1$ boxes that can contain the numbers $i+1,\dots,j-1$, except the lower right box and the box that contains the number $i$.
\item Finally, we place the numbers $j+1,\dots,m$. Let $j<s$.
If the number $s$ is placed in the same box as the number $i$, then the circle with the label $s$ is placed immediately after the circle with the label $i$ in the clockwise direction.
The number $s$ can be placed in the same box as the number $i$.
There are $q-2m+2$ boxes that can contain the numbers $j+1,\dots,m$, except the lower right box.
\end{itemize}
Therefore, we have $|M(\Cc^{(3)}_q)|=(q-2m+1)^{j-i-1}(q-2m+2)^{m-j+i}$ in this case.
For example, the following boxes and circles correspond to the element $(x_1,x_2,x_3,x_4,x_5)=(5,3,1,3,6)\in M(\Cc^{(3)}_q)$ in the case of $m=5,q=13,i=2,j=4$:
\begin{center}
\scalebox{0.8}{
\begin{tikzpicture}[main/.style = {draw, circle, very thick}] 
\draw [very thick] (0,0) rectangle (4,1); \draw [very thick] (5,0) rectangle (9,1); \draw [very thick] (10,0) rectangle (14,1);
\draw [very thick] (0,1.5) rectangle (4,2.5); \draw [very thick] (5,1.5) rectangle (9,2.5); \draw [very thick] (10,1.5) rectangle (14,2.5);
\node[main](0) at (0.5,2) {{\color{white}\large $0$}};  \node[main](3) at (12.5,2) {{\large $5$}};
\node[main](4) at (5.5,2) {{\color{white}\large $0$}}; \node[main](5) at (6.5,2) {{\large $2$}};
\node[main](6) at (1.5,2) {{\large $3$}}; \node[main](7) at (10.5,2) {{\color{white}\large $0$}};
\node[main](1) at (11.5,2) {{\large $1$}};  \node[main](2) at (11.5,0.5) {{\color{white}\large $0$}};
\node[main](12) at (12.5,0.5) {{\color{white}\large $0$}};
\node[main](11) at (5.5,0.5) {{\color{white}\large $0$}}; \node[main](10) at (6.5,0.5) {{\color{white}\large $0$}};
\node[main](9) at (10.5,0.5) {{\color{white}\large $0$}}; \node[main](8) at (1.5,0.5) {{\color{white}\large $0$}};
\end{tikzpicture}
}
\end{center}

\subsubsection{The case where $q$ is even}\label{sec-TypeC-rest-x_i=x_j-even}
Let $q$ be even.
Prepare $q-2m+2$ boxes side by side, $\displaystyle \frac{q}{2}-(m-1)$ on the upper side and $\displaystyle \frac{q}{2}-(m-1)$ on the lower side, and a circle corresponding to the element $\displaystyle \frac{q}{2}\in\mathbb{Z}_q$ on the right side of the boxes.
Place each of the numbers $1,\dots,j-1,j+1,\dots,m$ in one of the $q-2m+2$ boxes.
Unlike the case where $q$ is odd, there does not exist $x_s\in\mathbb{Z}_q$ such that $2x_s=1$, so each of the numbers $1,\dots,j-1,j+1,\dots,m$ can be placed in the lower right box.
The circle on the right side of the boxes has no labels and the number of boxes is one less than when $q$ is odd. 
In addition, the other prohibitive conditions are exactly the same as when $q$ is odd.
Therefore, we also have $|M(\Cc^{(3)}_q)|=(q-2m+2)^{m-j+i}(q-2m+1)^{j-i-1}$ if $q$ is even.

\subsection{Characteristic quasi-polynomial of restriction on $\{x_i-x_j=1\}$}\label{sec-TypeC-rest-xi=xj+1}
Let $\Cc^{(4)}=\mathcal{C}_m^{\{x_i-x_j=1\}}$ for $1\leq i<j \leq m$, and we prove the fourth equality of Theorem \ref{thm-main-typeC} as follows.
\begin{Them}\label{thm-char-quasi-TypeC-xi=xj+1}
We have
\begin{align*}
\left|M(\Cc_q^{(4)})\right|=(q-2m)^{m-j+i}(q-2m+1)^{j-i-1}
\end{align*}
for any $q\in\mathbb{Z}$ with $q\gg 0$.
\end{Them}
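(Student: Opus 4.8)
The plan is to count $|M(\Cc_q^{(4)})|$ directly with the box-and-circle encoding of Subsection~\ref{sec-TypeC-counting}, now constrained by the fixed relation $x_i=x_j+1$, treating the cases $q$ odd and $q$ even separately as in the previous subsections. First I would spell out the defining conditions of $M(\Cc_q^{(4)})$: impose $x_i=x_j+1$, keep $2x_s\neq 0$ and $2x_s\neq 1$ for all $s$, keep $x_s\neq x_t$ for $s\neq t$, keep the antipodal conditions $x_s\neq -x_t$ and $x_s\neq -x_t+1$ for $s\neq t$, and retain every inequality $x_s\neq x_t+1$ with $s<t$ \emph{except} the one indexed by $\{s,t\}=\{i,j\}$, which is precisely the hyperplane we restrict to.

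The combinatorial heart is the reading of $x_i=x_j+1$: in the clockwise labelling the circle $i$ sits immediately after the circle $j$. Since $i<j$ and labels increase clockwise inside each box, the two cannot be consecutive \emph{labelled} circles of a single box; the only possibility is that $j$ occupies the slot that would otherwise carry the leading unlabelled circle of the box whose first labelled circle is $i$. I would therefore glue $j$ and $i$ into a single two-circle block $(j,i)$ and count configurations of this block together with the remaining $m-2$ numbers. Simultaneously, the antipodal conditions force the three consecutive circles opposite to the block, namely those at positions $-x_i$, $-x_j$ and $-x_j+1$, to be unlabelled; this is the mechanism that lowers the available bases relative to the $x_i=x_j$ case of Theorem~\ref{thm-char-quasi-TypeC-xi=xj}.

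For odd $q$ I would prepare the boxes as before, place the block $(j,i)$, and then count the independent choices of box for each of the three groups $\{1,\dots,i-1\}$, $\{i+1,\dots,j-1\}$ and $\{j+1,\dots,m\}$, reading off the forbidden boxes from distinctness, from the surviving inequalities $x_s\neq x_t+1$, and from the antipodal conditions around the block. The bookkeeping should yield $q-2m$ admissible boxes for the numbers outside the window $(i,j)$ together with the block itself, and $q-2m+1$ admissible boxes for the inside numbers $i+1,\dots,j-1$, producing $(q-2m)^{m-j+i}(q-2m+1)^{j-i-1}$; a small worked example, as in the surrounding subsections, would make the placement transparent. For even $q$ I would insert the extra circle for $q/2$ and drop the now-vacuous constraint $2x_s=1$, then check that the same forbidden-box count recurs, so that the formula is parity-independent.

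The step I expect to be the main obstacle is the local analysis around the glued block. Because $i<j$ breaks the ascending-clockwise rule, one must argue precisely that circle $j$ must take the leading slot of circle $i$'s box, and then determine, group by group, exactly which boxes become unavailable from the interplay of the surviving inequalities $x_s\neq x_t+1$ with the antipodal conditions $x_s\neq -x_j+1$ and $x_s\neq -x_i$. An error of one in these availabilities would shift the bases, so this is exactly where the correct passage to $(q-2m)$ and $(q-2m+1)$ must be pinned down, and where the odd and even pictures must be reconciled into the single stated formula.
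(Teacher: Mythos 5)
Your setup (the defining conditions, with the exception correctly placed on $x_s\neq x_t+1$ for $(s,t)=(i,j)$) and your general plan --- box-and-circle counting as in Subsection~\ref{sec-TypeC-counting}, split by parity --- are the same as the paper's. But the structural claim at the heart of your argument, the very step you yourself flag as the main obstacle, is wrong. You assert that since $i<j$ and labels increase clockwise inside each box, the circles $j$ and $i$ ``cannot be consecutive labelled circles of a single box,'' and that therefore $j$ must occupy the leading unlabelled slot of the box whose first labelled circle is $i$. The ascending-clockwise rule inside a box is nothing other than the family of conditions $x_s\neq x_t+1$ ($s<t$), and the instance $(s,t)=(i,j)$ of that family is exactly the condition removed by restricting to $\{x_i-x_j=1\}$. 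Hence ``$j$ immediately followed by $i$'' is an allowed descent --- the unique allowed descent --- and it may occur \emph{anywhere} inside a box: any labels $s<j$ may precede $j$ in ascending order, and any labels $t>i$ may follow $i$. (Your picture is also internally inconsistent: if $j$ sits in the leading slot of the box whose first labelled circle is $i$, then $j$ and $i$ are consecutive labelled circles of that box after all.)

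This error is fatal to the count, not cosmetic. In the paper's proof (odd case), the glued pair is placed in any of the $q-2m$ boxes other than the lower right one; then a number $s<i$ has exactly one slot in $i$'s box (immediately before $j$), hence $q-2m$ choices; a number $s$ with $i<s<j$ has \emph{two} slots in $i$'s box (immediately before $j$, or in the ascending run after $i$) plus one slot in each of the other $q-2m-1$ usable boxes, hence $q-2m+1$ choices; and a number $s>j$ has only the slot after $i$, hence $q-2m$ choices. The base $q-2m+1$ for the middle group $i+1,\dots,j-1$ arises precisely from the ``before $j$'' slot that your structure eliminates. Under your claim, a number with $i<s<j$ and a number with $s>j$ would have exactly the same admissible placements (one slot per usable box, always in the run after $i$), so the resulting product could not assign different bases to these two groups; carried out honestly it gives $(q-2m)^{i-1}(q-2m+1)^{m-i}$ rather than $(q-2m)^{m-j+i}(q-2m+1)^{j-i-1}$. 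So the assertion that ``the bookkeeping should yield'' the stated bases cannot be salvaged as written; the local analysis of the glued block must be replaced by the one above, after which the rest of your plan (including the even case with the extra circle at $q/2$) does coincide with the paper's proof.
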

The complement $M(\Cc^{(4)}_q)$ is the set of $(x_1,\dots,x_m)\in\mathbb{Z}_q^m$ that satisfies the following conditions:
\begin{align*}
&x_j=x_i-1,\\
&2x_s\neq 0,\ 2x_s\neq 1\ (s\in [m]),\\
&x_s\neq x_t\ (s,t\in [m],\ s\neq t),\\
&x_s\neq x_t+1\ (s,t\in [m],\ s<t),\\
&x_s\neq -x_t,\ x_s\neq -x_t+1\ (s,t\in [m],\ s\neq t,\ \{s,t\}\neq \{i,j\}).
\end{align*}
We fix the condition $x_j=x_i-1$ and count the number of elements in $(x_1,\dots,x_m)\in M(\Cc^{(4)}_q)$. 
The condition $x_j=x_i-1$ means that the circle with the label $j$ is placed immediately before the circle with the label $i$ in the clockwise direction.
Therefore, once the box containing the number $i$ is determined, there is no need to specify the box containing the number $j$.

\subsubsection{The case where $q$ is odd}\label{sec-TypeC-rest-x_i=x_j+1-odd}
Let $q$ be odd.
Prepare $q-2m+1$ boxes side by side, $\displaystyle \frac{q+1}{2}-m$ on the upper side and $\displaystyle \frac{q+1}{2}-m$ on the lower side.
Place each of the numbers $1,\dots,j-1,j+1,\dots,m$ in one of the $q-2m+1$ boxes and create boxes and circles corresponding to the element $(x_1,\dots,x_m)\in M(\Cc^{(4)}_q)$.
From the condition $2x_s\neq 1$ for $s\in[m]$, the number $s$ cannot be placed in the lower right box.
\begin{itemize}
\item We first place the number $i$.
There are $q-2m$ boxes that can contain the number $i$, except the lower right box.
\item Next, we place the numbers $1,\dots,i-1$. Let $s<i$.
If the number $s$ is placed in the same box as the number $i$, then the circle with the label $s$ is placed before the circle labeled with $j$ in the clockwise direction.
In this case, we have $x_j=x_s+1$ and $s<j$, but no contradiction occurs.
However, the circle with the label $s$ cannot be placed after the circle with the label $i$, since $x_s\neq x_i+1$ with $s<i$.
The number $s$ can be placed in the same box as the number $i$ in one way.
There are $q-2m$ boxes that can contain the numbers $1,\dots,i-1$, except the lower right box.
\item Then, we place the numbers $i+1,\dots,j-1$. Let $i<s<j$.
When placing the number $s$ in the same box that contains the number $i$, we have two choices before $j$ or after $i$ in the clockwise direction.
If the circle with the label $s$ is before $i$, then we have $x_j=x_s+1$ and $s<j$ and there is no contradiction.
If the circle with the label $s$ is after $i$, then we have $x_s=x_i+1$ and $i<s$ and there is also no contradiction.
Therefore, there are $q-2m+1$ choices, since we have $q-2m$ boxes except the lower right box, and there are two choices for the box that contains the number $i$.
\item Finally, we place the numbers $j+1,\dots,m$. Let $j<s$.
In this case, if the number $s$ is placed in the same box as the number $i$, then the circle with the label $s$ is placed after the circle labeled with $i$ in the clockwise direction.
There are $q-2m$ boxes that can contain the numbers $j+1,\dots,m$, except the lower right box.
\end{itemize}
Therefore, we have $|M(\Cc^{(4)}_q)|=(q-2m)^{m-j+i}(q-2m+1)^{j-i-1}$ in this case.
For example, the following boxes and circles correspond to the element $(x_1,x_2,x_3,x_4,x_5)=(1,4,5,3,7)\in M(\Cc^{(4)}_q)$ in the case of $m=5,q=15,i=2,j=4$:
\begin{center}
\scalebox{0.8}{
\begin{tikzpicture}[main/.style = {draw, circle, very thick}] 
\draw [very thick] (0,0) rectangle (4,1); \draw [very thick] (5,0) rectangle (9,1); \draw [very thick] (10,0) rectangle (14,1);
\draw [very thick] (0,1.5) rectangle (4,2.5); \draw [very thick] (5,1.5) rectangle (9,2.5); \draw [very thick] (10,1.5) rectangle (14,2.5);
\node[main](0) at (0.5,2) {{\color{white}\large $0$}};  \node[main](3) at (12.5,2) {{\large $5$}};
\node[main](4) at (5.5,2) {{\color{white}\large $0$}}; \node[main](5) at (6.5,2) {{\large $4$}};
\node[main](5) at (8.5,2) {{\large $3$}};\node[main](4) at (8.5,0.5) {{\color{white}\large $0$}};
\node[main](5) at (7.5,2) {{\large $2$}};\node[main](4) at (7.5,0.5) {{\color{white}\large $0$}};
\node[main](6) at (1.5,2) {{\large $1$}}; \node[main](7) at (10.5,2) {{\color{white}\large $0$}};
\node[main](12) at (12.5,0.5) {{\color{white}\large $0$}};
\node[main](11) at (5.5,0.5) {{\color{white}\large $0$}}; \node[main](10) at (6.5,0.5) {{\color{white}\large $0$}};
\node[main](9) at (10.5,0.5) {{\color{white}\large $0$}}; \node[main](8) at (1.5,0.5) {{\color{white}\large $0$}};
\end{tikzpicture}
}
\end{center}

\subsubsection{The case where $q$ is even}\label{sec-TypeC-rest-x_i=x_j+1-even}
Let $q$ be even.
Exactly as in Section \ref{sec-TypeC-rest-x_i=x_j-even}, the number of elements of $M(\Cc^{(4)}_q)$ can be calculated as in the same discussion when $q$ is odd.
Therefore, we also have $|M(\Cc^{(4)}_q)|=(q-2m)^{m-j+i}(q-2m+1)^{j-i-1}$ in this case.

\subsection{Characteristic quasi-polynomial of restriction on $\{x_i+x_j=0\}$}\label{sec-TypeC-rest-xi=-xj}
Let $\Cc^{(5)}=\mathcal{C}_m^{\{x_i+x_j=0\}}$ for $1\leq i<j \leq m$, and we prove the fifth equality of Theorem \ref{thm-main-typeC} as follows.
\begin{Them}\label{thm-char-quasi-TypeC-xi=-xj}
We have
\begin{align*}
\left|M(\Cc_q^{(5)})\right|=
\begin{cases}
(q-2m)^{m-j}(q-2m+1)^{j-i}(q-2m+2)^{i-1}&\text{if $q$ is odd},\\
(q-2m)^{m-j}(q-2m+1)^{j-i-1}(q-2m+2)^{i}&\text{if $q$ is even}
\end{cases}
\end{align*}
for any $q\in\mathbb{Z}$ with $q\gg 0$.
\end{Them}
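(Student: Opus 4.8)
The plan is to adapt the boxes-and-circles counting method of Section~\ref{sec-TypeC-counting} to the fixed relation $x_i+x_j=0$, just as in the preceding subsections. The key observation is that in the circle diagrams the antipodal map $a\mapsto -a$ on $\ZZ_q$ is realized as the vertical reflection interchanging the upper and lower boxes; under this identification the prohibition $x_s\neq -x_t$ asserts that the circle directly opposite a labeled circle carries no label. Consequently, the relation $x_i=-x_j$ means exactly that the circle labeled $j$ occupies the position directly opposite the circle labeled $i$. Thus, choosing the box for $i$ automatically forces the box for $j$, and the two labels are placed together as a single linked pair.

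First I would arrange the boxes as in the base case of Section~\ref{sec-TypeC-counting} (with the additional circle for $\tfrac{q}{2}$ adjoined on the right when $q$ is even), and then place labels in the order: the linked pair $\{i,j\}$, followed by the labels $s<i$, then $i<s<j$, and finally $s>j$. For each label I would read off the number of admissible boxes by determining which of the prohibitions $x_s\neq x_t$, $x_s\neq x_t+1$, $x_s\neq -x_t$, $x_s\neq -x_t+1$ survive once the pair is fixed. I expect to find that each $s<i$ admits $q-2m+2$ boxes and each $i<s<j$ admits $q-2m+1$ boxes, while each $s>j$ admits only the base count $q-2m$ boxes, the reduction in the last case reflecting that the prohibitions against both $i$ and its antipode $j$ remain fully active. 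These counts, obtained exactly as in Sections~\ref{sec-TypeC-rest-xi=xj} and \ref{sec-TypeC-rest-xi=xj+1}, already assemble into the factors $(q-2m+2)^{i-1}$, $(q-2m+1)^{j-i-1}$, and $(q-2m)^{m-j}$.

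The decisive computation---and the point at which the odd and even formulas diverge---is the count for the linked pair itself. When $q$ is odd, the label $i$ must avoid the three positions $0$, $\tfrac{q+1}{2}$, and $\tfrac{q-1}{2}$ (the last because its antipode $\tfrac{q+1}{2}$ is the forced-empty circle arising from $2x_s\neq 1$), leaving the pair $q-2m+1$ admissible boxes; combined with the between-labels this produces the factor $(q-2m+1)^{j-i}$. When $q$ is even, only $0$ and the self-antipodal position $\tfrac{q}{2}$ are excluded, so the pair has one more choice, $q-2m+2$, which combines with the labels $s<i$ to give $(q-2m+2)^{i}$; this single extra choice is precisely what replaces one factor $q-2m+1$ by $q-2m+2$ between the two formulas. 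I anticipate that the main obstacle will be making this pair-count rigorous: one must verify that the antipodal constraint interacts cleanly with the shift prohibitions $x_s\neq x_t+1$ and $x_s\neq -x_t+1$ in the immediate neighborhood of the two linked circles, so that no further boxes are gained or lost, and that when $q$ is even the extra circle at $\tfrac{q}{2}$ contributes exactly one additional admissible configuration and nothing more. Once the pair-count is settled, multiplying the per-group factors yields the two claimed expressions.
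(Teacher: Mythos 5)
Your plan is the paper's own method: the linked antipodal pair, the placement order, and all four per-group box counts ($q-2m+2$ for $s<i$, $q-2m+1$ for $i<s<j$, $q-2m$ for $s>j$, and the pair counts $q-2m+1$ / $q-2m+2$ in the odd/even cases) coincide exactly with the paper's, and they assemble into the stated formulas. However, the step you yourself flag as the main obstacle is a genuine gap, and your sketch of it does not close it: forbidding the three \emph{positions} $0$, $\tfrac{q+1}{2}$, $\tfrac{q-1}{2}$ does not by itself forbid any \emph{boxes}. Position $0$ is always occupied by the structural unlabeled circle at the left edge of the upper left box, so it removes no box at all; the actual content of the odd-$q$ pair count is that exactly two boxes (the lower right \emph{and the upper right}) are unavailable to $i$, and the exclusion of the upper right box requires an argument, since a priori circle $i$ could sit somewhere in the interior of that box, away from the forbidden position $\tfrac{q-1}{2}$.

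The paper fills this as follows. First, no labeled circle $k$ can be immediately clockwise after circle $i$: that would give $x_k=x_i+1$, hence $x_j=-x_i=-x_k+1$, violating $x_j\neq -x_k+1$ (symmetrically, nothing can immediately follow $j$). Second, the lower right box contains no labeled circle besides possibly $j$: its clockwise-first position is the forbidden $\tfrac{q+1}{2}$, and a label immediately after $j$ would violate $x_i\neq -x_k+1$. Now suppose $i$ were placed in the upper right box (so $j$ lies in the lower right box). The circles to the right of circle $i$ inside its box could only be antipodes of labeled circles in the lower right box; by the second point the only such label is $j$, whose antipode is circle $i$ itself. Hence circle $i$ would sit at the right edge of the upper right box, i.e.\ $x_i=\tfrac{q-1}{2}$, forcing $x_j=\tfrac{q+1}{2}$ and contradicting $2x_j\neq 1$. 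This is what legitimately converts your position-avoidance into the box count $q-2m+1$ out of $q-2m+3$ when $q$ is odd; when $q$ is even the same reasoning excludes nothing, because the values $0$ and $\tfrac{q}{2}$ sit on the structural circle and on the extra circle outside the boxes, giving all $q-2m+2$ boxes. (A similar remark applies to your per-group counts: they are not literally "exactly as in" the $x_i-x_j=0$ and $x_i-x_j=1$ subsections, since here one must use the rule that no label can immediately follow $i$ or $j$; but those verifications are routine.) With this argument inserted, your proof is complete and is the same as the paper's.
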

The complement $M(\Cc^{(5)}_q)$ is the set of $(x_1,\dots,x_m)\in\mathbb{Z}_q^m$ that satisfies the following conditions:
\begin{align*}
&x_j=-x_i,\\
&2x_s\neq 0,\ 2x_s\neq 1\ (s\in [m]),\\
&x_s\neq x_t\ (s,t\in [m],\ s\neq t),\\
&x_s\neq x_t+1\ (s,t\in [m],\ s<t),\\
&x_s\neq -x_t,\ x_s\neq -x_t+1\ (s,t\in [m],\ s\neq t,\ \{s,t\}\neq \{i,j\}).
\end{align*}
We fix the condition $x_j=-x_i$ and count the number of elements in $(x_1,\dots,x_m)\in M(\Cc^{(5)}_q)$. 
The condition $x_j=-x_i$ means that the circle with the label $j$ is placed just on the opposite side of the circle labeled with $i$. 
Therefore, once the box containing the number $i$ is determined, there is no need to specify the box containing the number $j$.

\subsubsection{The case where $q$ is odd}\label{sec-TypeC-rest-x_i=-x_j-odd}
Let $q$ be odd. 
Prepare $q-2m+3$ boxes side by side, $\displaystyle \frac{q+1}{2}-(m-1)$ on the upper side and $\displaystyle \frac{q+1}{2}-(m-1)$ on the lower side.
Place each of the numbers $1,\dots,j-1,j+1,\dots,m$ in one of the $q-2m+3$ boxes.
For $s\in[m]$, the number $s$ cannot be placed in the lower right box since $2x_s\neq 1$.
If the circle with the label $k$ is placed in the same box as the circle with the label $i$ (resp. $j$) immediately after the circle with the label $i$ (resp. $j$) in the clockwise direction, then we have $-x_k+1=x_j$ (resp. $-x_k+1=x_i$), which is a contradiction. In other words, if a labeled circle is put in the same box as the circle with the label $i$ (resp. $j$), it can only be before the circle with the label $i$ (resp. $j$).
\begin{itemize}
\item We first place the number $i$.
The number $i$ cannot be placed in the upper right box.
Indeed, if the number $i$ is in the upper right box, then the circle with the label $i$ is at the right edge of the upper right box.
In addition, the circle with the label $j$ is at the right edge of the lower right box, which contradicts the condition $2x_j\neq 1$.
Therefore, there are $q-2m+1$ boxes that can contain the number $i$, except the upper right and lower right boxes.
\item Next, we place the numbers $1,\dots,i-1$. Let $s<i$.
If the number $s$ is placed in the same box as the number $i$ (resp. $j$), then the circle with the label $s$ is placed before the circle with the label $i$ (resp. $j$) in the clockwise direction.
There are $q-2m+2$ boxes that can contain the numbers $1,\dots,i-1$, except the lower right box.
\item Then, we place the numbers $i+1,\dots,j-1$. Let $i<s<j$.
If the number $s$ is placed in the same box as the number $i$, then the circle with the label $s$ must be placed after the circle with the label $i$ in the clockwise direction.
This is a contradiction.
On the other hand, the number $s$ can be placed in the same box as the number $j$.
Therefore, there are $q-2m+1$ boxes that can contain the numbers $i+1,\dots,j-1$, except the lower right box and the box that contains the number $i$.
\item Finally, we place the numbers $j+1,\dots,m$. Let $j<s$.
The number $s$ cannot be placed in the two boxes that contain the numbers $i$ and $j$.
There are $q-2m$ boxes that can contain the numbers $j+1,\dots,m$, except the lower right box and the boxes that contain the numbers $i$ and $j$.
\end{itemize}
Therefore, we have $|M(\Cc^{(5)}_q)|=(q-2m)^{m-j}(q-2m+1)^{j-i}(q-2m+2)^{i-1}$ in this case.

\subsubsection{The case where $q$ is even}\label{sec-TypeC-rest-x_i=-x_j-even}
Let $q$ be even.
Prepare $q-2m+2$ boxes side by side, $\displaystyle \frac{q}{2}-(m-1)$ on the upper side and $\displaystyle \frac{q}{2}-(m-1)$ on the lower side, and a circle corresponding to the element $\displaystyle \frac{q}{2}\in\mathbb{Z}_q$ on the right side of the boxes.
Place each of the numbers $1,\dots,j-1,j+1,\dots,m$ in one of the $q-2m+2$ boxes.
Unlike the case where $q$ is odd, each of the numbers $1,\dots,j-1,j+1,\dots,m$ can be placed in the lower right box.
The circle on the right side of the boxes has no labels since $2x_s\neq 0$ for $s\in [m]$.
\begin{itemize}
\item The number $i$ can be placed in all boxes.
Therefore, there are $q-2m+2$ boxes that can contain the number $i$.
\item For the numbers $1,\dots,i-1,i+1,\dots,j-1,j+1,\dots,m$, exactly as in the case where $q$ is odd, we can determine boxes that cannot contain each number.
There are $q-2m+2$ boxes that can contain the numbers $1,\dots,i-1$, since the numbers $1,\dots,i-1$ can be placed in all boxes.
Next, there are $q-2m+1$ boxes that can contain the numbers $i+1,\dots,j-1$, except the box that contains the number $i$.
Finally, there are $q-2m$ boxes that can contain the numbers $j+1,\dots,m$, except the boxes that contain the numbers $i$ and $j$.
\end{itemize}
Therefore, we have $|M(\Cc^{(5)}_q)|=(q-2m)^{m-j}(q-2m+1)^{j-i-1}(q-2m+2)^{i}$ in this case.
For example, the following boxes and circles correspond to the element $(x_1,x_2,x_3,x_4,x_5)=(8,3,1,11,9)\in M(\Cc^{(5)}_q)$ in the case of $m=5,q=14,i=2,j=4$:
\begin{center}
\scalebox{0.8}{
\begin{tikzpicture}[main/.style = {draw, circle, very thick}] 
\draw [very thick] (0,0) rectangle (4,1); \draw [very thick] (5,0) rectangle (9,1); \draw [very thick] (10,0) rectangle (14,1);
\draw [very thick] (0,1.5) rectangle (4,2.5); \draw [very thick] (5,1.5) rectangle (9,2.5); \draw [very thick] (10,1.5) rectangle (14,2.5);
\node[main](0) at (0.5,2) {{\color{white}\large $0$}};  \node[main](3) at (12.5,2) {\color{white}\large $0$};
\node[main](4) at (5.5,2) {{\color{white}\large $0$}}; \node[main](5) at (6.5,2) {{\large $2$}};
\node[main](6) at (1.5,2) {{\large $3$}}; \node[main](7) at (10.5,2) {{\color{white}\large $0$}};
\node[main](1) at (11.5,2) {{\color{white}\large $0$}};  \node[main](2) at (11.5,0.5) {{\large $5$}};
\node[main](12) at (12.5,0.5) {{\large $1$}};\node[main](0) at (15,1.25) {{\color{white}\large $0$}};
\node[main](11) at (5.5,0.5) {{\color{white}\large $0$}}; \node[main](10) at (6.5,0.5) {{\large $4$}};
\node[main](9) at (10.5,0.5) {{\color{white}\large $0$}}; \node[main](8) at (1.5,0.5) {{\color{white}\large $0$}};
\end{tikzpicture}
}
\end{center}

\subsection{Characteristic quasi-polynomial of restriction on $\{x_i+x_j=1\}$}\label{sec-TypeC-rest-xi=-xj+1}
Let $\Cc^{(6)}=\mathcal{C}_m^{\{x_i+x_j=1\}}$ for $1\leq i<j \leq m$, and we prove the fifth equality of Theorem \ref{thm-main-typeC} as follows.
\begin{Them}\label{thm-char-quasi-TypeC-xi=-xj+1}
We have
\begin{align*}
\left|M(\Cc_q^{(6)})\right|=
\begin{cases}
(q-2m)^{i-1}(q-2m+1)^{j-i}(q-2m+2)^{m-j}&\text{if $q$ is odd},\\
(q-2m)^{i}(q-2m+1)^{j-i-1}(q-2m+2)^{m-j}&\text{if $q$ is even}
\end{cases} 
\end{align*}
for any $q\in\mathbb{Z}$ with $q\gg 0$.
\end{Them}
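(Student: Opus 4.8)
The plan is to adapt the box-and-circle method of Subsection~\ref{sec-TypeC-rest-xi=-xj}, developed there for the restriction on $\{x_i+x_j=0\}$, to the hyperplane $\{x_i+x_j=1\}$, again splitting into the cases $q$ odd and $q$ even. The key starting remark is that the defining relation reads $x_j=-x_i+1$, so fixing the number $i$ already determines the number $j$: the circle labeled $j$ must be the clockwise successor of the circle opposite to the one labeled $i$, and symmetrically the circle labeled $i$ is the clockwise successor of the circle opposite to the one labeled $j$. Consequently the slot immediately preceding (clockwise) the circle labeled $i$ is the circle opposite to $j$, and the slot immediately preceding the circle labeled $j$ is the circle opposite to $i$; both are forced to be unlabeled by the surviving conditions $x_s\neq -x_i$ and $x_s\neq -x_j$ for $s\neq i,j$.

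From this I extract the decisive combinatorial fact: because those two slots are occupied by unlabeled opposite circles, any further label placed in the box of $i$ (resp.\ of $j$) must lie clockwise after it, and the ascending/descending ordering inside a box permits this only for labels larger than $i$ (resp.\ than $j$). This is precisely the mirror image of the $\{x_i+x_j=0\}$ situation, in which labels could only precede $i$ and $j$, and it explains why the roles of $q-2m$ and $q-2m+2$ are interchanged relative to Theorem~\ref{thm-char-quasi-TypeC-xi=-xj}. For $q$ odd I would prepare $q-2m+3$ boxes and count admissible boxes group by group. A label $k>j$ may follow either $i$ or $j$, so it only avoids the lower right box, giving $q-2m+2$ choices; a label $i<k<j$ may follow $i$ but not $j$, so it avoids the lower right box and the box of $j$, giving $q-2m+1$ choices; the number $i$ avoids the lower right box together with the single further box excluded by $x_i\neq 1$ (equivalently $2x_j\neq 0$), again giving $q-2m+1$ choices; and a label $k<i$ can follow neither $i$ nor $j$, so it avoids the lower right box and the boxes of both $i$ and $j$, giving $q-2m$ choices. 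Multiplying yields $(q-2m)^{i-1}(q-2m+1)^{j-i}(q-2m+2)^{m-j}$.

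For $q$ even I would instead use $q-2m+2$ boxes together with the unlabeled circle attached to $q/2$, exactly as in Subsection~\ref{sec-TypeC-rest-xi=-xj}. Now $2x_s\neq 1$ is vacuous, so the lower right box is admissible, while the constraints $2x_i\neq 0$ and $2x_j\neq 0$, read through $x_j=-x_i+1$ as $x_i\neq 1$ and $x_i\neq q/2+1$, delete two boxes from the number $i$. The same group-by-group analysis then gives $q-2m+2$ choices for each $k>j$, $q-2m+1$ for each $i<k<j$, and $q-2m$ for each label in $\{1,\dots,i-1\}$ and for $i$ itself, producing $(q-2m)^{i}(q-2m+1)^{j-i-1}(q-2m+2)^{m-j}$. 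As an independent check on the odd case, the involution $\Phi$ given by $\Phi(\bm{x})_s=\frac{q+1}{2}-x_{m+1-s}$, which combines a coordinatewise reflection with reversal of the indices, is an automorphism of $\Cc_q$: it permutes the difference hyperplanes, and it interchanges $\{2x_s=0\}\leftrightarrow\{2x_s=1\}$ and $\{x_s+x_t=0\}\leftrightarrow\{x_s+x_t=1\}$; hence $\Phi$ maps $M((\Cc_m^{\{x_i+x_j=1\}})_q)$ bijectively onto $M((\Cc_m^{\{x_{m+1-j}+x_{m+1-i}=0\}})_q)$, and inserting $i'=m+1-j$, $j'=m+1-i$ into Theorem~\ref{thm-char-quasi-TypeC-xi=-xj} recovers the odd-case formula.

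I expect the main obstacle to be the precise bookkeeping of the boxes forbidden to the number $i$: a boundary condition on $x_j$ has to be converted, via $x_j=-x_i+1$ and the opposite-circle identification, into a statement about the box of $i$, which is exactly the step that ruled out the upper right box in Subsection~\ref{sec-TypeC-rest-xi=-xj}. This is more delicate when $q$ is even, since the extra circle at $q/2$ and the disappearance of the condition $2x_s=1$ reshuffle which boxes are free, and one must confirm that exactly two boxes — neither one nor three — are removed from $i$. Finally, the degenerate ranges $i=1$, $j=i+1$, and $j=m$ should be checked so that the empty products are interpreted correctly.
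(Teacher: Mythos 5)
Your structural starting point is right: the pair $(i,j)$ forms a $2\times 2$ block (circle $j$ is the clockwise successor of the circle opposite to $i$, and the circles opposite to $i$ and to $j$ are forced to be unlabeled). But the counting you build on it contains a genuine error, starting with the number of boxes. Because this pair consumes \emph{four} circle positions, rather than the two it consumes in the $\{x_i+x_j=0\}$ case of Subsection~\ref{sec-TypeC-rest-xi=-xj} (there $j$ \emph{is} the opposite of $i$), the requirement that the total number of circles equal $q$ forces \emph{two fewer} boxes than in that subsection: $q-2m+1$ boxes when $q$ is odd, and $q-2m$ boxes plus the circle at $q/2$ when $q$ is even, which is what the paper uses. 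With your counts $q-2m+3$ and $q-2m+2$, the configurations carry $q+2$ circles, so no bijection with points of $\ZZ_q^m$ can exist; the setup is inconsistent before any counting starts.

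The exclusion rules are wrong as well, not just the box count. Your key claim — that a label placed in the box of $i$ (resp.\ $j$) must lie clockwise after it, so that each $k<i$ avoids both boxes and $i$ itself avoids the upper-left box — is false: a label $k<i$ may sit in the same box as $i$, clockwise before the unlabeled circle opposite to $j$, and nothing forbids this; likewise $i$ can occupy the upper-left box (at $x_i=2$, with the circle opposite to $j$ at position $1$), and in fact $i$ is excluded from \emph{no} box. In the correct setup the factors arise in the opposite way: labels avoid only the lower-right box (when $q$ is odd), and the counts $q-2m+1$ for $i<k<j$ and $q-2m+2$ for $k>j$ come from \emph{extra multiplicities} — such a label can be placed in the box of $i$ (and, for $k>j$, also of $j$) in two ways, before or after — while the factor $q-2m+1$ for $i$ itself comes from the paper's three-way case analysis ($i$ in the lower-right box, in the upper-right box, or elsewhere, contributing $1+1+(q-2m-1)$). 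Your products match the theorem only because you mirrored the exclusion pattern of Theorem~\ref{thm-char-quasi-TypeC-xi=-xj}, not because the described correspondence exists. The one part that does work is your ``independent check'': for odd $q$ the map $\Phi(\bm{x})_s=\frac{q+1}{2}-x_{m+1-s}$ genuinely is an automorphism of $\Cc_m[q]$ carrying $M((\Cc_m^{\{x_i+x_j=1\}})_q)$ onto $M((\Cc_m^{\{x_{m+1-j}+x_{m+1-i}=0\}})_q)$, so together with Theorem~\ref{thm-char-quasi-TypeC-xi=-xj} it \emph{proves} the odd case — arguably more cleanly than the paper. But it has no analogue for even $q$ (there is no $\frac{q+1}{2}$ in $\ZZ_q$, and the even formulas are visibly not symmetric under $(i,j)\mapsto(m+1-j,m+1-i)$), so the even case of the theorem is left without a valid proof.
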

The complement $M(\Cc^{(6)}_q)$ is the set of $(x_1,\dots,x_m)\in\mathbb{Z}_q^m$ that satisfies the following conditions:
\begin{align*}
&x_j=-x_i+1,\\
&2x_s\neq 0,\ 2x_s\neq 1\ (s\in [m],\ s\neq i),\\
&x_s\neq x_t\ (s,t\in [m],\ s\neq t),\\
&x_s\neq x_t+1\ (s,t\in [m],\ s<t),\\
&x_s\neq -x_t,\ x_s\neq -x_t+1\ (s,t\in [m],\ s\neq t,\ \{s,t\}\neq \{i,j\}).
\end{align*}
We fix the condition $x_j=-x_i+1$ and count the number of elements in $(x_1,\dots,x_m)\in M(\Cc^{(6)}_q)$. 
The condition $x_j=-x_i+1$ means that the circle with label $j$ is immediately after the opposite circle of the circle with label $i$.
Therefore, once the box containing the number $i$ is determined, there is no need to specify the box containing the number $j$.

\subsubsection{The case where $q$ is odd}\label{sec-TypeC-rest-x_i=-x_j+1-odd}
Let $q$ be odd.
Prepare $q-2m+1$ boxes side by side, $\displaystyle \frac{q+1}{2}-m$ on the upper side and $\displaystyle \frac{q+1}{2}-m$ on the lower side.
Place each of the numbers $1,\dots,j-1,j+1,\dots,m$ in one of the $q-2m+1$ boxes.

We first place the number $i$.
The number $i$ can be placed in the lower right box.
In fact, if $i$ is placed in the lower right box, then the circle immediately before the circle with the label $i$ is the unlabeled circle in the clockwise direction, and we have $\displaystyle 2x_i\neq 1\ \Leftrightarrow x_i\neq \frac{q+1}{2}$.
Similarly, the number $i$ can be placed in the upper right box.
\begin{center}
\scalebox{0.8}{
\begin{tikzpicture}[main/.style = {draw, circle, very thick}] 
\draw [very thick] (0,0) rectangle (4,1); \draw [very thick] (0,1.5) rectangle (4,2.5);
\node[main](1) at (2.5,2) {{\color{white}\large $0$}}; \node[main](2) at (3.5,2) {{\large $j$}}; \node[main](2) at (2.5,0.5) {{\large $i$}};\node[main](1) at (3.5,0.5) {{\color{white}\large $0$}};
\end{tikzpicture}
}\quad or\quad
\scalebox{0.8}{
\begin{tikzpicture}[main/.style = {draw, circle, very thick}] 
\draw [very thick] (0,0) rectangle (4,1); \draw [very thick] (0,1.5) rectangle (4,2.5);
\node[main](1) at (2.5,2) {{\color{white}\large $0$}}; \node[main](2) at (3.5,2) {{\large $i$}}; \node[main](2) at (2.5,0.5) {{\large $j$}};\node[main](1) at (3.5,0.5) {{\color{white}\large $0$}};
\end{tikzpicture}
}
\end{center}
In the following, the discussion will be divided into cases, depending on where the number $i$ is placed.

(i) We consider the case where $i$ is placed in the lower right box.
Let $s<i$.
The number $s$ cannot be placed in the lower right box, as the circle with the label $s$ is placed before the circle with the label $i$ in the clockwise direction, a contradiction to the condition $2x_s\neq 1$.
There are $q-2m$ boxes that can contain the numbers $1,\dots,i-1$, except the lower right box.
\begin{center}
\scalebox{0.8}{
\begin{tikzpicture}[main/.style = {draw, circle, very thick}] 
\draw [very thick] (0,0) rectangle (4,1); \draw [very thick] (0,1.5) rectangle (4,2.5);
\node[main](1) at (1.5,2) {{\color{white}\large $0$}}; \node[main](2) at (2.5,2) {{\large $j$}}; \node[main](3) at (3.5,2) {{\color{white}\large $0$}}; \node[main](4) at (1.5,0.5) {{\large $i$}}; \node[main](5) at (2.5,0.5) {{\color{white}\large $0$}}; \node[main](6) at (3.5,0.5) {{\large $s$}};
\end{tikzpicture}
}
\end{center}
Next, let $i<s<j$.
The number $s$ can be placed in the lower right box, as the circle with the label $s$ is placed after the circle with the label $i$ in the clockwise direction.
Therefore, there are $q-2m+1$ boxes that can contain the numbers $i+1,\dots,j-1$, since these numbers can be placed in all boxes.

Finally, let $j<s$.
The number $s$ can be placed in the lower right box.
In addition, there are two ways to place the circle with the label $s$ in the upper right box, one is to place $s$ before $j$ and the other is to place $s$ after $j$.
There are $q-2m+2$ choices for the numbers $j+1,\dots,m$, since we have $q-2m+1$ boxes and there are two choices for the upper right box.

The number of elements of $M(\Cc^{(6)}_q)$ in this case is $(q-2m)^{i-1}(q-2m+1)^{j-i-1}(q-2m+2)^{m-j}$.
For example, the following boxes and circles correspond to the element $(x_1,x_2,x_3,x_4,x_5,x_6)=(3,11,12,7,8,13)\in M(\Cc^{(6)}_q)$ in the case of $m=6,q=17,i=2,j=4$:
\begin{center}
\scalebox{0.8}{
\begin{tikzpicture}[main/.style = {draw, circle, very thick}] 
\draw [very thick] (0,0) rectangle (2,1); \draw [very thick] (3,0) rectangle (5,1); \draw [very thick] (6,0) rectangle (13,1);
\draw [very thick] (0,1.5) rectangle (2,2.5); \draw [very thick] (3,1.5) rectangle (5,2.5); \draw [very thick] (6,1.5) rectangle (13,2.5);
\node[main](0) at (0.5,2) {{\color{white}\large $0$}}; \node[main](1) at (3.5,2) {{\color{white}\large $0$}}; \node[main](2) at (6.5,2) {{\color{white}\large $0$}}; \node[main](3) at (7.5,2) {{\large $1$}}; \node[main](4) at (8.5,2) {{\color{white}\large $0$}}; \node[main](5) at (9.5,2) {{\color{white}\large $0$}}; \node[main](6) at (10.5,2) {{\color{white}\large $0$}}; \node[main](7) at (11.5,2) {{\large $4$}}; \node[main](8) at (12.5,2) {{\large $5$}};
\node[main](9) at (12.5,0.5) {{\color{white}\large $0$}}; \node[main](10) at (11.5,0.5) {{\color{white}\large $0$}}; \node[main](11) at (10.5,0.5) {{\large $2$}}; \node[main](12) at (9.5,0.5) {{\large $3$}}; \node[main](13) at (8.5,0.5) {{\large $6$}}; \node[main](14) at (7.5,0.5) {{\color{white}\large $0$}}; \node[main](15) at (6.5,0.5) {{\color{white}\large $0$}}; \node[main](16) at (3.5,0.5) {{\color{white}\large $0$}};
\end{tikzpicture}
}
\end{center}

(i\hspace{-0.5mm}i) We consider the case where $i$ is placed in the upper right box.
Let $s<i$.
The number $s$ cannot be placed in the lower right box, as the circle with the label $s$ is placed before the circle with the label $j$ in the clockwise direction.
There are $q-2m$ boxes that can contain the numbers $1,\dots,i-1$, except the lower right box.

Next, let $i<s<j$.
The number $s$ cannot also be placed in the lower right box, and there are two ways (before $i$ and after $i$) to place the circle with the label $s$ in the upper right box.
There are $q-2m+1$ choices for the numbers $i+1,\dots,j-1$, since we have $q-2m$ boxes except the lower right box and there are two choices for the upper right box.

Finally, let $j<s$.
The number $s$ can be placed in the lower right box, and there are two ways (before $i$ and after $i$) to place the circle with the label $s$ in the upper right box.
Therefore, there are $q-2m+2$ choices for the numbers $j+1,\dots,m$, since we have $q-2m+1$ boxes and there are two choices for the upper right box.

The number of elements of $M(\Cc^{(6)}_q)$ in this case is also $(q-2m)^{i-1}(q-2m+1)^{j-i-1}(q-2m+2)^{m-j}$.

(i\hspace{-0.5mm}i\hspace{-0.5mm}i) We consider the case where $i$ is placed in neither the upper right box nor the lower right box. 
There are $q-2m-1$ boxes that can contain the number $i$, except the upper right and lower right boxes.
Then, each of the numbers $1,\dots,i-1$ cannot be placed in the lower right box.
There are $q-2m$ boxes that can contain the numbers $1,\dots,i-1$, except the lower right box.

Next, let $i<s<j$.
The number $s$ cannot be placed in the lower right box.
In addition, there are two ways (before $i$ and after $i$) to place the circle with the label $s$ in the same box as the number $i$, but the circle with the label $s$ can only be placed before $j$ in the same box as the number $j$.
In fact, if $s$ is placed after $j$, then we have $x_s=x_j+1$ with $s<j$, a contradiction.
Therefore, there are $q-2m+1$ choices for the numbers $i+1,\dots,j-1$, since we have $q-2m$ boxes except the lower right box and there are two choices for the same box as $i$.

Finally, let $j<s$.
The number $s$ cannot be placed in the lower right box.
There are two ways to place the circle with the label $s$ in the same boxes as the numbers $i$ and $j$.
Therefore, there are $q-2m+2$ choices for the numbers $j+1,\dots,m$, since we have $q-2m$ boxes except the lower right box and there are two choices for the same boxes as $i$ and $j$.

The number of elements of $M(\Cc^{(6)}_q)$ in this case is $(q-2m-1)(q-2m)^{i-1}(q-2m+1)^{j-i-1}(q-2m+2)^{m-j}$.
Combining (i), (i\hspace{-0.5mm}i), and (i\hspace{-0.5mm}i\hspace{-0.5mm}i), we have 
\begin{align*}
|M(\Cc^{(6)}_q)|
&=2(q-2m)^{i-1}(q-2m+1)^{j-i-1}(q-2m+2)^{m-j}\\
&\qquad+(q-2m-1)(q-2m)^{i-1}(q-2m+1)^{j-i-1}(q-2m+2)^{m-j}\\
&=(q-2m)^{i-1}(q-2m+1)^{j-i}(q-2m+2)^{m-j}
\end{align*}
if $q$ is odd.

\subsubsection{The case where $q$ is even}\label{sec-TypeC-rest-x_i=-x_j+1-even}
Let $q$ be even.
Prepare $q-2m$ boxes side by side, $\displaystyle \frac{q}{2}-m$ on the upper side and $\displaystyle \frac{q}{2}-m$ on the lower side, and a circle corresponding to the element $\displaystyle \frac{q}{2}\in\mathbb{Z}_q$ on the right side of the boxes.
Place each of the numbers $1,\dots,j-1,j+1,\dots,m$ in one of the $q-2m$ boxes.
Unlike the case where $q$ is odd, each of the numbers $1,\dots,j-1,j+1,\dots,m$ can be placed in the lower right box.
The circle on the right side of the boxes has no labels since $2x_s\neq 0$ for $s\in [m]$.
The counting in this case is very similar to the case (i\hspace{-0.5mm}i\hspace{-0.5mm}i) in Subsection~\ref{sec-TypeC-rest-x_i=-x_j+1-odd}.

\begin{itemize}
\item The number $i$ can be placed in all boxes.
There are $q-2m$ boxes that can contain the number $i$.

\item Each of the numbers $1,\dots,i-1$ can also be placed in all boxes, and there are $q-2m$ boxes that can contain the numbers $1,\dots,i-1$.

\item There are two ways (before $i$ and after $i$) to place circles with labels $i+1,\dots,j-1$ in the same box as the number $i$, but $i+1,\dots,j-1$ can only be placed before $j$ in the same box as the number $j$.
Therefore, there are $q-2m+1$ choices for the numbers $i+1,\dots,j-1$, since we have $q-2m$ boxes and there are two choices for the same box as $i$.

\item There are two ways to place circles with labels $j+1,\dots,m$ in the same boxes as the numbers $i$ and $j$.
Therefore, there are $q-2m+2$ choices for the numbers $j+1,\dots,m$, since we have $q-2m$ boxes and there are two choices for the same boxes as $i$ and $j$.
\end{itemize}
Therefore, we have $|M(\Cc^{(6)}_q)|=(q-2m)^{i}(q-2m+1)^{j-i-1}(q-2m+2)^{m-j}$ if $q$ is even.

\section{Shi arrangement of type D}\label{sec-TypeD} 
We compute the characteristic quasi-polynomials of the Shi arrangement of type D by providing a bijective relation with the Shi arrangement of type B. 
This is a different way to compute the Shi arrangement of type B and type C. 

We collect some notation on hyperplanes used in this section. 
For $1 \leq i < j \leq m, q \in \ZZ_{>0}, c \in \ZZ$, we define the following:
    \begin{itemize}
        \item Let $H_{i,c} := \{\bm{x} \in \ZZ^m \mid x_i = c\}$.
        \item Let $H_{i,j,c}^- := \{\bm{x} \in \ZZ^m \mid x_i - x_j = c\}$.
        \item Let $H_{i,j,c}^+ := \{\bm{x} \in \ZZ^m \mid x_i + x_j= c\}$.
        \item For a hyperplane $H \subset \ZZ^m$ defined by a linear form $\alpha_H$, let $H[q] := \{\bm{x} \in \ZZ_q^m \mid \alpha_H(\bm{x}) = 0\}$. 
        \item For a hyperplane arrangement $\Ac$, let $\Ac[q] := \{H[q] \mid H \in \Ac\}$ and $M(\Ac_q):=\ZZ_q^m\setminus\bigcup_{H \in \Ac} H[q]$.
    \end{itemize}

\subsection{A bijection}\label{sec:bij}
First, we prove the following. 
\begin{Them}\label{thm-chara-quasi-TypeD}
    The characteristic quasi-polynomial of the Shi arrangement of type D is 
\begin{align*}
\left|M(\Dc_q)\right|=(q-2m+2)^m
\end{align*}
for any $q\in\mathbb{Z}$ with $q\gg 0$.
\end{Them}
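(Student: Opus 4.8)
The plan is to deduce the formula from the known type B result \eqref{eq:typeB_Shi} by exhibiting an explicit bijection $\Phi : M(\Dc_q) \to M(\Bc_{q+2})$; since $\left|M(\Bc_{q+2})\right| = ((q+2)-2m)^m = (q-2m+2)^m$, the theorem follows at once. The numerology already points to this target: type D differs from type B only by the coordinate hyperplanes $\{x_i=0\}$ and $\{x_i=1\}$, and the corresponding Coxeter numbers differ by $2$, which is exactly compensated by passing from $\ZZ_q$ to $\ZZ_{q+2}$.

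First I would record the defining inequalities. Using representatives in $\{0,1,\dots,q-1\}$ (resp.\ $\{0,\dots,q+1\}$), a point $\bm x$ lies in $M(\Dc_q)$ iff $x_s \neq x_t$, $x_s \neq -x_t$, and $x_s + x_t \neq 1$ for all $s \neq t$, together with $x_s \neq x_t + 1$ for $s < t$; membership in $M(\Bc_{q+2})$ imposes the same relations modulo $q+2$ and, in addition, $y_s \notin \{0,1\}$. I would then define $\Phi$ coordinatewise through a single bijection $f : \ZZ_q \to \ZZ_{q+2} \setminus \{0,1\}$, namely
\[
f(x) = \begin{cases} x+1 & \text{if } 1 \le x \le q-1, \\ q+1\ (\equiv -1) & \text{if } x = 0. \end{cases}
\]
This is the shift-by-one map, except that the image value $1$ (forbidden in type B) is rerouted to its negative $-1$. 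By construction $f$ is a bijection onto $\{2,3,\dots,q+1\} = \ZZ_{q+2}\setminus\{0,1\}$, so $\Phi$ is automatically a bijection $\ZZ_q^m \to (\ZZ_{q+2}\setminus\{0,1\})^m$ and the conditions $y_s \notin \{0,1\}$ hold for free.

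The heart of the argument is to show that $\Phi$ restricts to a bijection on the complements. For this it suffices to prove that an ordered pair $(a,b) = (x_s,x_t)$ with $s<t$ is \emph{$D$-forbidden}, i.e.\ $a \in \{b,\, b+1,\, -b,\, 1-b\} \pmod q$, if and only if its image $(f(a),f(b))$ is \emph{$B$-forbidden}, i.e.\ $f(a) \in \{f(b),\, f(b)+1,\, -f(b),\, 1-f(b)\} \pmod{q+2}$. I would verify this by splitting into the cases $a,b \ge 1$, then $a=0,\,b\ge 1$, then $a\ge 1,\,b=0$, and finally $a=b=0$, translating each membership relation across the two moduli; a point of $M(\Dc_q)$ then maps into $M(\Bc_{q+2})$ and conversely, giving $\left|M(\Dc_q)\right| = \left|M(\Bc_{q+2})\right| = (q-2m+2)^m$.

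The main obstacle I anticipate is that the four inequalities do \emph{not} correspond individually: the matching ``(D2)$\leftrightarrow$(B3)'' genuinely fails for the configuration $(x_s,x_t)=(1,0)$, where the point is $D$-forbidden by $x_s \neq x_t+1$ but its image satisfies the corresponding type B inequality. The resolution, and the reason the bijection nonetheless works, is that such a point is simultaneously excluded by a sum condition on both sides (namely $x_s+x_t=1$ and $f(x_s)+f(x_t)\equiv 1$), so the total forbidden sets still coincide. Consequently the verification must be organized around the full conjunction of conditions (equivalently, around the notion of forbidden pair above) rather than condition by condition, and the value $x=0$ together with its image $-1$ is the only place where genuine care is required.
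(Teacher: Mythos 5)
Your proposal is correct and takes essentially the same route as the paper: the paper's proof of Theorem~\ref{thm-chara-quasi-TypeD} uses exactly your map (shift every coordinate by $1$ and reroute any zero coordinate to $q+1\equiv -1$ in $\ZZ_{q+2}$) as a bijection $M(\Dc_q)\to M(\Bc_{q+2})$, verified by the same kind of case analysis on whether a coordinate equals $0$ (resp.\ $q+1$). Your additional observation that the four defining conditions do not correspond one-by-one but the total forbidden pair sets do (e.g.\ at $(x_s,x_t)=(1,0)$, where a difference condition on the D side is absorbed by a sum condition on the B side) is accurate, and is handled implicitly by the paper's case-by-case well-definedness checks for $\phi$ and $\psi$.
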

\begin{proof}
We construct a bijection between $M(\Dc_q)$ and $M(\Bc_{q+2})$. 
Once we establish it, we obtain the desired conclusion since we already know $\left|M(\Bc_q)\right|=(q-2m)^m$ by \eqref{eq:typeB_Shi}.

\medskip

For each $\bm{x} \in M(\Dc_q)$ (resp. $\bm{y} \in M(\Bc_{q+2})$), since $\bm{x} \not\in H_{s,s',0}^-[q]$ (resp. $\bm{y} \not\in H_{s,s',0}^-[q+2]$) for any $s,s' \in [m]$ with $s \neq s'$, we have $x_s \neq x_{s'}$ (resp. $y_s \neq y_{s'}$). 
In particular, there is at most one $s \in [m]$ such that $x_s=0$ (resp. $y_s=q+1$). 
By taking this into account, we define the maps $\phi : M(\Dc_q) \to M(\Bc_{q+2})$ and $\psi : M(\Bc_{q+2}) \to M(\Bc_q)$ as follows:
\begin{itemize}
    \item For $\bm{x} \in M(\Dc_q)$, let
    \begin{align*}
        \phi(\bm{x})=
        \begin{cases}
            \bm{x} + \bm{1} + q\bm{e}_u &\text{if $x_u = 0$}, \\
            \bm{x} + \bm{1} &\text{otherwise}, 
        \end{cases}
    \end{align*}
    where $\bm{1} = (1, \ldots, 1)$, $\bm{e}_u$ denotes the $u$-th unit vector, and all resulting vectors are regarded as those in $\ZZ_{q+2}^m$. 
\end{itemize}
\begin{itemize}
    \item For $\bm{y} \in M(\Bc_{q+2})$, let
    \begin{align*}
        \psi(\bm{y})=
        \begin{cases}
            \bm{y} - \bm{1} - q\bm{e}_u &\text{if $y_u = q+1$}, \\
            \bm{y} - \bm{1} &\text{otherwise}, 
        \end{cases}
    \end{align*}
where all resulting points are regarded as vectors in $\ZZ_q^m$. 
\end{itemize}

We check that these maps are well-defined. Once we confirm it, since $\psi \circ \phi$ (resp. $\phi \circ \psi$) is the identity on $M(\Dc_q)$ (resp. $M(\Bc_{q+2})$), we can verify that these are bijective. 

\medskip

\noindent
For the well-definedness for $\phi$, let $\bm{x} \in M(\Dc_q)$.
It is clear that $\phi(\bm{x}) \in \ZZ_{q+2}^m$ by definition. 
\begin{itemize}
\item We see that $\phi(\bm{x}) \notin \bigcup_{s=1}^m (H_{s,0}[q+2] \cup H_{s,1}[q+2])$. 
    \begin{itemize}
        \item In the case where there is no index $u \in [m]$ such that $x_u = 0$, since $\bm{x} \in (\ZZ_q \setminus \{0\})^m$, we obtain $\phi(\bm{x}) = \bm{x} + \bm{1} \in (\ZZ_{q+2} \setminus \{0,1\})^m$. Thus, $\phi(\bm{x}) \notin \bigcup_{s=1}^m (H_{s,0}[q+2] \cup H_{s,1}[q+2])$ holds.
        \item In the case where there is $u \in [m]$ such that $x_u = 0$, the $u$-th entry of $\phi(\bm{x})$ is $q+1$, while each of the other entries is not $0$. Thus, $\phi(\bm{x}) \notin \bigcup_{s=1}^m (H_{s,0}[q+2] \cup H_{s,1}[q+2])$. 
    \end{itemize}
\item We see that $\phi(\bm{x}) \notin \bigcup_{1 \leq s<t \leq m}(H_{s,t,0}^+[q+2] \cup H_{s,t,1}^+[q+2])$.
    \begin{itemize}
        \item Assume $x_s \neq 0$ and $x_t \neq 0$. Since $x_s + x_t \in \ZZ_q \setminus \{0,1\}$, we obtain $(x_s + 1) + (x_t + 1) \in \ZZ_{q+2} \setminus \{0,1\}$. 
        \item Assume $x_s=0$ or $x_t=0$, say, $x_s=0$. Since $x_t \in \ZZ_q \setminus \{0,1\}$, we obtain $(q+1) + (x_t+1) \in \ZZ_{q+2}\setminus\{0,1\}$. 
    \end{itemize}
\item We see that $\phi(\bm{x}) \notin \bigcup_{1 \leq s<t \leq m}(H_{s,t,0}^-[q+2] \cup H_{s,t,1}^-[q+2])$.
    \begin{itemize}
        \item Assume $x_s \neq 0$ and $x_t \neq 0$. 
        Since $x_s - x_t \in \ZZ_q \setminus \{0,1\}$, we have $(x_s - 1) - (x_t - 1) \in \ZZ_{q+2} \setminus \{0,1\}$. 
        \item Assume that $x_s = 0$. (The case $x_t=0$ is similar.) 
        Then $-x_t \in \ZZ_q \setminus \{0,1\}$, i.e., $x_t \in \{1,\ldots,q-2\}$ as elements of $\ZZ$. 
        Thus, we obtain that $(q+1) - (x_t + 1) = q - x_t \in \{2,\ldots,q-1\}$ as $\ZZ$, i.e., $q-x_t \in \ZZ_{q+2} \setminus \{0,1\}$.  
    \end{itemize}
\end{itemize}

\medskip

\noindent
For the well-definedness for $\psi$, let $\bm{y} \in M(\Bc_{q+2})$.
Since $\bm{y} \in (\ZZ_{q+2} \setminus \{0,1\})^m$, we see that $\psi(\bm{y}) \in \ZZ_q^m$.   
\begin{itemize}
\item We see that $\psi(\bm{y}) \notin \bigcup_{1 \leq s<t \leq m}(H_{s,t,0}^+[q] \cup H_{s,t,1}^+[q])$.
    \begin{itemize}
        \item Assume $y_s \neq q+1$ and $y_t \neq q+1$. Since $y_s,y_t,y_s+y_t \in \ZZ_{q+2} \setminus \{0,1\}$, 
        we have $y_s + y_t \in \{4,5,\ldots,2q\} \setminus \{q+2,q+3\}$ as elements of $\ZZ$. 
        Thus, $y_s + y_t - 2 \in \{2,3,\ldots,2q-2\} \setminus \{q,q+1\}$, i.e., $(y_s-1)+(y_t-1)=y_s + y_t - 2 \in \ZZ_q \setminus \{0,1\}$. 
        \item Assume $y_s=q+1$ or $y_t=q+1$, say, $y_s=q + 1$. Then $y_t \in \ZZ_{q+2} \setminus \{0,1,2,q+1\}$ by $y_s+y_t \in \ZZ_{q+2} \setminus \{0,1\}, y_s - y_t \in \ZZ_{q+2} \setminus \{0\}$, and $y_t \in \ZZ_{q+2} \setminus \{0,1\}$. 
        Thus, $y_s + y_t \in \{q+4,q+5,\ldots,2q+1\}$ as $\ZZ$. Hence, we obtain $(y_s-q-1)+(y_t - 1)=y_t-1 \in \{2,\ldots,q-1\}$, 
        i.e., $y_t-1 \in \ZZ_q \setminus \{0,1\}$. 
    \end{itemize}
\item We see that $\psi(\bm{y}) \notin \bigcup_{1 \leq s<t \leq m}(H_{s,t,0}^-[q] \cup H_{s,t,1}^-[q])$.
    \begin{itemize}
        \item Assume $y_s \neq q+1$ and $y_t \neq q+1$. Since $y_s,y_t,y_s - y_t \in \ZZ_{q+2} \setminus \{0,1\}$, 
        we have $y_s-y_t \in \{-q+2,-q+3,\ldots,q-2\} \setminus \{0,1\}$ as $\ZZ$. 
        Thus, we obtain $(y_s -1) - (y_t - 1) =y_s-y_t \in \ZZ_q \setminus \{0,1\}$. 
        \item Assume that $y_s = q+1$. 
        Then $y_t \in \ZZ_{q+2} \setminus \{0,1,2,q+1\}$ by $y_s - y_t \in \ZZ_{q+2} \setminus \{0\}, y_s + y_t \in \ZZ_{q+2} \setminus \{0,1\}$ and $y_t \in \ZZ_{q+2} \setminus \{0,1\}$. Thus, $q+1 - y_t \in \{2,3,\ldots,q-1\}$ as $\ZZ$. 
        Hence, we obtain $-(y_t - 1) \in \ZZ_q \setminus\{0,1\}$.
    \end{itemize}
\end{itemize}

These discussions imply the well-definedness of $\phi$ and $\psi$, as desired. 
\end{proof}

In what follows, we use this bijection by identifying certain restrictions of $M(\Dc_q)$ and $M(\Bc_{q+2})$ (or their analogues). 
We prove the equalities in Theorem~\ref{thm-main-typeD} in a different order, arranged according to the ease of proof as we see it.

\subsection{Characteristic quasi-polynomial of restriction on $\{x_i+x_j=0\}$}\label{sec-quasi-poly-restxi=-xj}
Let $\Dc^{(3)}=\Dc_m^{\{x_i+x_j=0\}}$ for $1\leq i<j \leq m$, and we prove the third equality of Theorem \ref{thm-main-typeD} as follows.
\begin{Them}\label{chara-quasi-del-by-xi=-xj}
We have
\begin{align*}
\left|M(\Dc^{(3)}_q)\right|=
\begin{cases}
    (q-2m+2)^{m-j}(q-2m+3)^{j-i}((q-2m+4)^{i-1}-(q-2m+3)^{i-2}) \;\;&\text{if $q$ is odd}, \\
    (q-2m+2)^{m-j+1}(q-2m+3)^{j-i-1}(q-2m+4)^{i-1} &\text{if $q$ is even} 
\end{cases}
\end{align*}
for any $q\in\ZZ$ with $q\gg 0$.
\end{Them}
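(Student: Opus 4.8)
The plan is to deduce the formula directly from the ambient bijection $\phi$ of Theorem~\ref{thm-chara-quasi-TypeD} together with the known type B restriction values of Theorem~\ref{typeB}. Concretely, I would show that $\phi$ restricts to a bijection between $M(\Dc^{(3)}_q)=M(\Dc_m^{\{x_i+x_j=0\}})$ and the type B restriction $M((\Bc_m^{\{x_i+x_j=0\}})_{q+2})$, and then read off the latter from Theorem~\ref{typeB} at modulus $q+2$.

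First I would record the conditions defining $M(\Dc^{(3)}_q)$: a point $\bm{x}$ lies in it exactly when $x_i+x_j=0$ holds and, in addition, all the type D Shi conditions hold except the defining one $x_i+x_j\neq 0$. In particular $x_i\neq x_j$ is forced, so from $x_i=-x_j$ we see that neither $x_i$ nor $x_j$ can equal $0$; hence at most one of the remaining coordinates is $0$, exactly as in the setting of $\phi$. The crucial observation is then that, although $x_i+x_j\equiv 0\pmod q$, the integer representatives of $x_i$ and $x_j$ lie in $\{1,\dots,q-1\}$ and therefore sum to exactly $q$. Since $x_i,x_j\neq 0$, no zero-coordinate adjustment is applied to the $i$-th or $j$-th entry, so $\phi(\bm{x})=\bm{x}+\bm{1}$ on those coordinates gives $y_i+y_j=q+2\equiv 0\pmod{q+2}$. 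Thus $\phi$ carries the defining hyperplane $\{x_i+x_j=0\}$ of type D over $\ZZ_q$ precisely onto the defining hyperplane $\{y_i+y_j=0\}$ of type B over $\ZZ_{q+2}$, rather than onto $\{y_i+y_j=2\}$ as a naive reduction in $\ZZ_q$ might suggest.

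For the remaining conditions I would simply re-run the well-definedness computations already carried out in the proof of Theorem~\ref{thm-chara-quasi-TypeD}; they are insensitive to dropping the single condition $x_i+x_j\neq 0$, and the inverse $\psi$ restricts in the same way. This establishes the bijection $M(\Dc_m^{\{x_i+x_j=0\}})\cong M((\Bc_m^{\{x_i+x_j=0\}})_{q+2})$, and hence the equality of cardinalities. It then remains to substitute: because $q$ and $q+2$ have the same parity, applying the type B formula of Theorem~\ref{typeB} for $\{x_i+x_j=0\}$ at modulus $q+2$ — where the quantity $T=q-2m$ in that theorem is replaced by $(q+2)-2m=T+2$ — yields $(T+2)^{m-j}(T+3)^{j-i}\bigl((T+4)^{i-1}-(T+3)^{i-2}\bigr)$ when $q$ is odd and $(T+2)^{m-j+1}(T+3)^{j-i-1}(T+4)^{i-1}$ when $q$ is even, which are exactly the claimed expressions.

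The main obstacle, and really the only delicate point, is the correct lifting of the condition $x_i+x_j=0$ through $\phi$. One must argue at the level of integer representatives, where the sum of $x_i$ and $x_j$ is $q$ and not $0$, and one must verify that neither $x_i$ nor $x_j$ triggers the zero-coordinate adjustment in the definition of $\phi$; this is what guarantees that the image lands on $\{y_i+y_j=0\}$. Once this is secured, the rest is a transcription of the ambient bijection and a parity-preserving substitution into the known type B result.
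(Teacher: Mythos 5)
Your proposal is correct and follows essentially the same route as the paper: restrict the ambient bijection $\phi$ (and its inverse $\psi$) from Theorem~\ref{thm-chara-quasi-TypeD} to $M(\Dc_m^{\{x_i+x_j=0\}})$, with the key points being exactly the ones you isolate — that $x_i\neq 0$ and $x_j\neq 0$ (so no zero-coordinate adjustment touches those entries) and that at the level of integer representatives $x_i+x_j=q$ becomes $(x_i+1)+(x_j+1)=q+2$, landing on $\{y_i+y_j=0\}$ over $\ZZ_{q+2}$ — and then substitute $T+2$ for $T$ in the parity-preserving application of Theorem~\ref{typeB}. This matches the paper's proof of Theorem~\ref{chara-quasi-del-by-xi=-xj} in both structure and detail.
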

\begin{proof}
Let $\Bc^{(3)} = \Bc_m^{\{x_i+x_j=0\}}$.
For our purpose, we show the existence of a bijection between $M(\Dc^{(3)}_q)$ and $M(\Bc^{(3)}_{q+2})$. 
To this end, we use $\phi$ given in the proof of Theorem~\ref{thm-chara-quasi-TypeD}. 
Then it suffices to verify that the map $\phi$ restricted to $M(\Dc^{(3)}_q)$ is well-defined as well as $\psi$ restricted to $M(\Bc^{(3)}_{q+2})$. 
More precisely, our goal is to show that if $\bm{x} \in M(\Dc^{(3)}_q)$ (resp. $\bm{y} \in M(\Bc^{(3)}_{q+2})$), 
then $\phi(\bm{x}) \in M(\Bc^{(3)}_{q+2})$ (resp. $\psi(\bm{y}) \in M(\Dc^{(3)}_q)$). 

First, we check the well-definedness of $\phi$. 
We observe that neither $x_i = 0$ nor $x_j = 0$ holds because this would contradict the conditions $x_i+x_j=0$ and $x_i - x_j \neq 0$. 

Let $\bm{x} \in M(\Dc_q^{(3)})$. It is enough to show that $\phi(\bm{x}) \in H_{i,j,0}^+[q+2]$. 
Since $\bm{x} \in H_{i,j,0}^+[q]$, we have $x_i + x_j = q$ as $\ZZ$. Thus, we obtain $(x_i + 1) + (x_j + 1) = q+2$ as $\ZZ$, i.e., $\phi(\bm{x}) \in H_{i,j,0}^+[q+2]$. 
Hence, the map $\phi$ restricted to $M(\Dc_q^{(3)})$ is well-defined.

Let $\bm{y} \in M(\Bc_{q+2}^{(3)})$. It is enough to show that $\psi(\bm{y}) \in H_{i,j,0}^+[q]$. 
Since $\bm{y} \in H_{i,j,0}^+[q+2]$, we have $y_i + y_j = q+2$ as $\ZZ$. Thus, we obtain $(y_i - 1) + (y_j - 1) = q$ as $\ZZ$, i.e., $\psi(\bm{y}) \in H_{i,j,0}^+[q]$. 
Hence, the map $\psi$ restricted to $M(\Bc_{q+2}^{(3)})$ is well-defined.

Therefore, we obtain that $|M(\Dc_q^{(3)})| = |M(\Bc_{q+2}^{(3)})|$. 
This shows Theorem~\ref{chara-quasi-del-by-xi=-xj} by Theorem~\ref{typeB}. 
\end{proof}

\subsection{Characteristic quasi-polynomial of restriction on $\{x_i-x_j=1\}$}\label{sec-quasi-poly-restxi=xj+1}
Let $\Dc^{(2)}=\Dc_m^{\{x_i-x_j=1\}}$ for $1\leq i<j \leq m$, and we prove the second equality of Theorem \ref{thm-main-typeD} as follows.
\begin{Them}\label{chara-quasi-del-by-xi=xj+1}
We have
\begin{align*}
\left|M(\Dc^{(2)}_q)\right|=(q-2m+2)^{m+i-j}(q-2m+3)^{j-i-1}
\end{align*}
for any $q\in\ZZ$ with $q\gg 0$.
\end{Them}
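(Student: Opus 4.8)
The plan is to mirror the proof of Theorem~\ref{chara-quasi-del-by-xi=-xj}: I set $\Bc^{(2)} = \Bc_m^{\{x_i-x_j=1\}}$ and establish a bijection between $M(\Dc^{(2)}_q)$ and $M(\Bc^{(2)}_{q+2})$ by restricting the maps $\phi$ and $\psi$ from the proof of Theorem~\ref{thm-chara-quasi-TypeD}. Once this is in place, Theorem~\ref{typeB} applied with $q$ replaced by $q+2$ gives $|M(\Bc^{(2)}_{q+2})| = (q-2m+2)^{m+i-j}(q-2m+3)^{j-i-1}$, which is exactly the claimed value of $|M(\Dc^{(2)}_q)|$, so it only remains to produce the bijection.

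The reduction is as follows. The complements $M(\Dc^{(2)}_q)$ and $M(\Bc^{(2)}_{q+2})$ are cut out by the same families of hyperplane conditions treated in the proof of Theorem~\ref{thm-chara-quasi-TypeD}, except that the condition attached to the pair $(i,j)$ and the hyperplane $\{x_i-x_j=1\}$ is now imposed as an equality rather than as an exclusion. Since every well-definedness computation in that proof uses only conditions coming from hyperplanes \emph{other} than the fixed one, all of those computations carry over verbatim to the restricted setting. Hence it suffices to verify the one new requirement: that $\phi$ maps points of $H^-_{i,j,1}[q]$ to points of $H^-_{i,j,1}[q+2]$, and that $\psi$ maps points of $H^-_{i,j,1}[q+2]$ to points of $H^-_{i,j,1}[q]$.

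The hard part will be the boundary behaviour, i.e.\ the cases where $\phi$ (resp.\ $\psi$) applies the correction term $q\bm{e}_u$ at $u=i$ or $u=j$. First I would rule out the \emph{spurious} correction: for $\bm{x} \in M(\Dc^{(2)}_q)$ one cannot have $x_j=0$, since $x_i-x_j=1$ would then force $x_i=1$ and hence $x_i+x_j=1$, contradicting $\bm{x} \notin H^+_{i,j,1}[q]$; dually, for $\bm{y} \in M(\Bc^{(2)}_{q+2})$ one cannot have $y_j=q+1$, since $y_i-y_j=1$ would force $y_i=0$, contradicting $\bm{y} \notin H_{i,0}[q+2]$. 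Thus only the correction at $u=i$ can occur. A short arithmetic check then confirms that the hyperplane is preserved in every remaining case: when neither of $x_i,x_j$ is the zero coordinate one has $x_i-x_j=1$ as integers, so $\phi(\bm{x})_i-\phi(\bm{x})_j=(x_i+1)-(x_j+1)=1$; and when $x_i=0$ (so $x_j=q-1$) one gets $\phi(\bm{x})_i-\phi(\bm{x})_j=(q+1)-q=1$ in $\ZZ_{q+2}$. The symmetric computation for $\psi$ (with $y_i=q+1$, $y_j=q$, giving $\psi(\bm{y})_i-\psi(\bm{y})_j = 0-(q-1)\equiv 1 \pmod q$) finishes the verification.

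Combining these, $\phi$ and $\psi$ restrict to mutually inverse maps between $M(\Dc^{(2)}_q)$ and $M(\Bc^{(2)}_{q+2})$, whence $|M(\Dc^{(2)}_q)| = |M(\Bc^{(2)}_{q+2})|$ and the formula follows from Theorem~\ref{typeB}. The only genuinely delicate point, and the one I would write out most carefully, is the exclusion of the spurious wrap-around ($x_j=0$ on the type D side, $y_j=q+1$ on the type B side); everything else is either inherited from Theorem~\ref{thm-chara-quasi-TypeD} or is a one-line arithmetic identity.
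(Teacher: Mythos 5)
Your proposal is correct and follows essentially the same route as the paper: set $\Bc^{(2)}=\Bc_m^{\{x_i-x_j=1\}}$, restrict the bijection $\phi,\psi$ from the proof of Theorem~\ref{thm-chara-quasi-TypeD}, and conclude via Theorem~\ref{typeB} with $q$ replaced by $q+2$. In fact your write-up is more detailed than the paper's own proof (which merely asserts well-definedness ``similarly to'' the $\{x_i+x_j=0\}$ case), and your handling of the genuine wrap-around case $x_i=0$, $x_j=q-1$ --- which, unlike in the $\{x_i+x_j=0\}$ case, cannot be excluded --- is exactly the point that needs care.
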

\begin{proof}
Let $\Bc^{(2)} = \Bc_m^{\{x_i-x_j=1\}}$.
We define the same maps $\phi : M(\Dc^{(2)}_q) \to M(\Bc^{(2)}_{q+2})$ and $\psi  : M(\Bc^{(2)}_{q+2}) \to M(\Dc^{(2)}_q)$ by restricting them. 
%
Then, similarly to Subsection~\ref{sec-quasi-poly-restxi=-xj}, we can verify that the maps $\phi$ and $\psi$ are well-defined. Thus, they are bijective. Hence, we have $|M(\Dc_q^{(2)})| = |M(\Bc_{q+2}^{(2)})|$. This implies Theorem~\ref{chara-quasi-del-by-xi=xj+1} by Theorem~\ref{typeB}. 
\end{proof}

\subsection{Characteristic quasi-polynomial of restriction on $\{x_i-x_j=0\}$}\label{sec-quasi-poly-restxi=xj}
Let $\Dc^{(1)}=\Dc_m^{\{x_i-x_j=0\}}$ for $1\leq i<j \leq m$, and we prove the first equality of Theorem \ref{thm-main-typeD} as follows.
\begin{Them}\label{chara-quasi-del-by-xi=xj}
    We have
    \begin{align*}
        \left|M(\Dc^{(1)}_q)\right|=
        \begin{cases}
            (q-2m+3)^{j-i-1}(q-2m+4)^{m-j}((q-2m+4)^i-(q-2m+3)^{i-1})\\
            -(q-2m+3)^{m-i-1}(q-2m+4)^{i-1}\;\;\;&\text{if $q$ is odd},\\
            (q-2m+3)^{j-i-1}(q-2m+4)^{i-1}((q-2m+4)^{m-j+1}-(q-2m+3)^{m-j}) \\
            -(q-2m+3)^{m-i-1}(q-2m+4)^{i-1} &\text{if $q$ is even}
        \end{cases}
    \end{align*}
for any $q\in\ZZ$ with $q\gg 0$.
\end{Them}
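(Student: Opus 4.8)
The plan is to reuse the bijection $\phi$ (and its inverse $\psi$) from the proof of Theorem~\ref{thm-chara-quasi-TypeD}, exactly as in Subsection~\ref{sec-quasi-poly-restxi=-xj}, but to account for the fact that this time the restriction of $\psi$ fails on a small, explicitly describable piece. Write $\Bc^{(1)} = \Bc_m^{\{x_i-x_j=0\}}$. First I would check that $\phi$ restricts to a well-defined map $M(\Dc^{(1)}_q) \to M(\Bc^{(1)}_{q+2})$: for $\bm{x} \in M(\Dc^{(1)}_q)$ the condition $x_i+x_j \neq 0$ together with $x_i=x_j$ forces $2x_i \neq 0$, in particular $x_i = x_j \neq 0$, so neither $i$ nor $j$ is the distinguished index shifted by $q\bm{e}_u$; hence $(\phi(\bm{x}))_i = x_i+1 = x_j+1 = (\phi(\bm{x}))_j$, so $\phi(\bm{x}) \in H_{i,j,0}^-[q+2]$, while the remaining non-membership checks are identical to those in Theorem~\ref{thm-chara-quasi-TypeD}. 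Moreover $(\phi(\bm{x}))_i = x_i+1 \in \{2,\dots,q\}$, so $\phi(\bm{x})_i \neq q+1$.

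The key new observation is that the image of $\phi$ is precisely the subset $\{\bm{y} \in M(\Bc^{(1)}_{q+2}) : y_i = y_j \neq q+1\}$: for such $\bm{y}$ there is at most one index $u$ (necessarily $\neq i,j$) with $y_u=q+1$, so $\psi$ is well-defined on it, and $\psi(\bm{y}) \in M(\Dc^{(1)}_q)$ by a check parallel to Subsection~\ref{sec-quasi-poly-restxi=-xj} (the only point needing attention is $x_i+x_j \neq 0$, which holds because $y_i+y_j \neq 0$ in $\ZZ_{q+2}$ already excludes the one offending value $y_i=y_j=\tfrac{q}{2}+1$ in the even case). By contrast, a point with $y_i=y_j=q+1$ has no preimage, since applying $\psi$ would yield $x_i=x_j\equiv 0$, violating $x_i+x_j \neq 0$. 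Thus $\phi,\psi$ are mutually inverse bijections between $M(\Dc^{(1)}_q)$ and $M(\Bc^{(1)}_{q+2}) \setminus \{\bm{y} : y_i=y_j=q+1\}$, whence
\[
\left|M(\Dc^{(1)}_q)\right| = \left|M(\Bc^{(1)}_{q+2})\right| - N, \qquad N := \left|\{\bm{y} \in M(\Bc^{(1)}_{q+2}) : y_i=y_j=q+1\}\right|.
\]
The first term is supplied directly by Theorem~\ref{typeB} with $q$ replaced by $q+2$, and a short computation shows it equals the leading (unsubtracted) expression in the statement for each parity of $q$.

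It remains to evaluate the correction term $N$, which I expect to be the main obstacle. Substituting $y_i=y_j=q+1=-1$ into the defining conditions of $M(\Bc^{(1)}_{q+2})$ leaves a system purely in the coordinates $(y_s)_{s\neq i,j}$: each such $y_s$ must avoid $\{0,1,2,q+1\}$, must additionally avoid $q$ when $s>i$, and each pair $(y_s,y_t)$ must satisfy the usual type-D inequalities $y_s \neq y_t,\,y_t+1,\,-y_t,\,-y_t+1$. This is again amenable to the box-and-circle bookkeeping of Section~\ref{sec-TypeC} (equivalently the type B count of \cite{HN2024}): the pair $\{i,j\}$ collapses to a single label pinned at the extremal value $q+1$, and placing the remaining $m-2$ labels shows that each label $s<i$ admits $q-2m+4$ positions while each label $s>i$ with $s\neq j$ admits only $q-2m+3$ positions, the extra forbidden value $q$ removing one option. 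This gives $N = (q-2m+3)^{m-i-1}(q-2m+4)^{i-1}$, independently of the parity of $q$; the even case requires the same care with the central circle as in Subsection~\ref{sec-TypeC-rest-x_i=x_j-even} but produces the identical answer.

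Substituting $N$ and the type B value into the displayed identity then yields the two claimed formulas. The delicate part throughout is pinning down that the \emph{only} failure of bijectivity is the flat $\{y_i=y_j=q+1\}$ and that $N$ has exactly this asymmetric shape; once these are established, the remaining manipulations are routine adaptations of the arguments already carried out for the clean cases in Subsections~\ref{sec-quasi-poly-restxi=-xj} and \ref{sec-quasi-poly-restxi=xj+1}.
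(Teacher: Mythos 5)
Your proposal is correct and follows essentially the same route as the paper: the paper also transfers the problem to type B at level $q+2$ via the bijection $\phi$, isolates exactly the points with $y_i=y_j=q+1$ as the missing part, and counts them by the box-and-circle method (its Lemma~\ref{equality-xi=xj} is precisely your correction term $N$, including the lengthy even-$q$ case analysis you anticipate, where the per-label counts are not uniform and only the summed total gives the clean product). The only difference is bookkeeping: the paper realizes the exclusion of $\{y_j=q+1\}$ by restricting the augmented arrangement $\Bc_m \cup \{x_j=-1\}$ and invoking the deletion--restriction formula \eqref{eq:del-res}, whereas you identify the image of $\phi$ directly; the two decompositions are identical.
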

\begin{proof}
Let $\Bc^{(1)} = (\Bc_m \cup \{x_j = -1\})^{\{x_i-x_j = 0\}}$. 
We define the same maps $\phi : M(\Dc^{(1)}_q) \to M(\Bc^{(1)}_{q+2})$ and $\psi  : M(\Bc^{(1)}_{q+2}) \to M(\Dc^{(1)}_q)$ analogously to the previous sections. 
For $\bm{x} \in M(\Dc^{(1)}_q)$, we have $x_j \neq 0$. Indeed, if $x_j = 0$, then $x_i - x_j = 0$ implies $x_i = 0$, which contradicts the condition $x_i + x_j \neq 0$.
Moreover, $\bm{x} \in \ZZ_q^m$ implies $\bm{x} + \bm{1} \in (\ZZ_{q+2} \setminus \{0,q+1\})^m$. 
Therefore, we have $\phi(\bm{x}) \notin H_{j,-1}[q+2]$.
For the remaining conditions (e.g. $\phi(\bm{x}) \not\in \bigcup_{s=1}^m (H_{s,0}[q+2] \cup H_{s,1}[q+2])$), we can verify them straightforwardly as in the previous sections. Thus, they are bijective. Therefore, $\left|M(\Dc^{(1)}_q)\right| = \left|M(\Bc^{(1)}_{q+2})\right|$. 

On the other hand, by using the deletion-restriction formula \eqref{eq:del-res}, we obtain 
\[|M(\Bc^{(1)}_q)| = |M((\Bc_m^{\{x_i-x_j=0\}})_q)| - |M(((\Bc_m \cup \{x_j = -1\})^{\{x_i-x_j=0, x_j=-1\}})_q)|.\] 
It follows from this and Theorem~\ref{typeB} that we can determine $|M(\Bc^{(1)}_q)|$ by computing $|M(((\Bc_m \cup \{x_j = -1\})^{\{x_i-x_j=0, x_j=-1\}})_q)|$. 
By Lemma~\ref{equality-xi=xj} below, we can conclude Theorem~\ref{chara-quasi-del-by-xi=xj}. 
\end{proof}

\begin{Lem}\label{equality-xi=xj}
     Let $\mathcal{\widetilde{B}}^{(1)} = (\Bc_m \cup \{x_j = -1\})^{\{x_i-x_j=0, x_j=-1\}}$. Then we have 
     \begin{align*}
         \left|M(\mathcal{\widetilde{B}}^{(1)}_q) \right|= (q-2m+1)^{m-i-1}(q-2m+2)^{i-1}
     \end{align*}
for any $q\in\ZZ$ with $q\gg 0$.
\end{Lem}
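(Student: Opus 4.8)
The plan is to compute $|M(\mathcal{\widetilde{B}}^{(1)}_q)|$ directly by the box-and-circle counting method of \cite{HN2024}, adapted to the present restriction in the same spirit as the type C computations of Section~\ref{sec-TypeC}. First I would turn the restriction into a counting problem on the free coordinates. The defining subspace is $\{x_i=x_j=-1\}$, where $-1=q-1$ in $\ZZ_q$, so substituting $x_i=x_j=q-1$ reduces $M(\mathcal{\widetilde{B}}^{(1)}_q)$ to the set of tuples $(x_s)_{s\in[m]\setminus\{i,j\}}$ satisfying the type B Shi conditions among themselves together with the forbidden-value conditions coming from the hyperplanes pairing a free index $s$ with $i$ or with $j$. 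Since both $\{x_i-x_j=0\}$ and $\{x_j=-1\}$ contain the subspace, they impose no condition, and for $q\gg 0$ one checks that the remaining conditions involving $i$ or $j$ amount to $x_s\notin F_s$, where
\[
F_s=\{-1,0,1,2\}\ \ (|F_s|=4)\ \text{ if } s<i,\qquad F_s=\{-2,-1,0,1,2\}\ \ (|F_s|=5)\ \text{ if } s>i.
\]

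The key structural observation, which I would isolate at the outset, is that the index $j$ contributes nothing beyond $i$. The only value distinguishing the two cases is $-2=q-2$, and it is forbidden precisely when $s>i$, arising from the Shi condition $x_i-x_s\neq 1$; note that the value $x_j+1=0$ that $j$ would exclude for $i<s<j$ is already excluded by the type B condition $x_s\neq 0$. Hence the count depends only on the position of $s$ relative to $i$, and the problem is equivalent to counting the complement of the type B Shi arrangement on the $m-1$ variables indexed by $[m]\setminus\{j\}$, restricted to $\{x_i=-1\}$; in particular a single pinned label suffices.

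Next I would run the box-and-circle method on this configuration exactly as in Sections~\ref{sec-TypeC-counting} and \ref{sec-TypeC-rest-2x_i=0}, pinning the merged label at the circle corresponding to $-1=q-1$ and distributing the remaining $m-2$ labels into the boxes. The forbidden set $F_s$ translates into a fixed list of boxes that cannot receive label $s$ (the box containing $0,1$, the box containing $-1$, its opposite circle, and the adjacent circles governed by the $\pm$ and Shi conditions). A case analysis then shows that each label $s<i$ has $q-2m+2$ admissible boxes, while each label $s>i$ has $q-2m+1$ admissible boxes, the single extra exclusion being the one forced by $-2\in F_s$. Since the placements become independent once the within-box order is fixed by the Shi inequalities, multiplying over the $i-1$ labels with $s<i$ and the $m-i-1$ labels with $s>i$ yields $(q-2m+2)^{i-1}(q-2m+1)^{m-i-1}$.

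Finally I would carry out the odd and even cases separately, since the underlying box picture differs: for even $q$ there is an extra self-opposite circle at $q/2$, which in type B may a priori carry a label (unlike type C, where $2x_s\neq 0$ forbids it). The main obstacle is exactly this bookkeeping — verifying that the per-label admissible-box counts are as claimed and, crucially, that they coincide for both parities, so that the box missing from one picture is compensated by an additional admissible box in the other, just as in the type C computations of Sections~\ref{sec-TypeC-rest-xi=xj} and \ref{sec-TypeC-rest-xi=xj+1}. This parity-independence is consistent with the parity-uniformity of $|M(\Bc_m)|=(q-2m)^m$ in \eqref{eq:typeB_Shi}; once it is confirmed, both cases collapse to the single formula $(q-2m+1)^{m-i-1}(q-2m+2)^{i-1}$, completing the proof.
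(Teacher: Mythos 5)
Your reduction to forbidden value sets is correct, and the observation that $j$ is redundant (so that only the position of a free index $s$ relative to $i$ matters) is accurate: it matches the structure of the paper's own count, where the labels $1,\dots,i-1$ receive $q-2m+2$ admissible boxes and the labels $i+1,\dots,m$ other than $j$ receive $q-2m+1$. Your odd case is essentially the paper's proof: pin the merged pair at the left end of the lower-left box, exclude the upper-left box for every label and the lower-left box for the labels $s>i$, and multiply.

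The even case, however, is a genuine gap, and the mechanism you propose for closing it is wrong. You assert that the per-label admissible-box counts ``coincide for both parities,'' the box missing from one picture being compensated by an additional admissible box in the other. That is false as a pointwise statement. For even $q$ the circle at $q/2$ is self-opposite and, in type B (unlike type C), it may carry a label; whether it does changes the entire box structure. In the configurations with no label at $q/2$ there are only $q-2m+2$ boxes and the count is $T^{m-i-1}(T+1)^{i-1}$ with $T=q-2m$, which falls strictly short of the target $(T+1)^{m-i-1}(T+2)^{i-1}$. The deficit is recovered only by summing over the configurations in which some label $k$ occupies $q/2$; these contribute $T^{m-i-1}(T+1)^{i-k-1}(T+2)^{k-1}$ for $k<i$, $T^{m-k-1}(T+1)^{k-i-1}(T+2)^{i-1}$ for $i<k<j$, and $T^{m-k}(T+1)^{k-i-2}(T+2)^{i-1}$ for $k>j$ (the last two differ only because $j$ is excluded from the free labels, so even in your reduced indexing a sum over the position of $k$ remains), and the aggregate telescopes to the stated formula. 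This case analysis and telescoping summation constitutes the bulk of the paper's proof of Lemma~\ref{equality-xi=xj} and is exactly the step you defer with ``once it is confirmed'': within each fixed configuration type the placements are indeed independent, but the per-label counts genuinely differ between parities, and only the sum over where (if anywhere) the $q/2$ label sits is parity-independent. Without carrying out that summation, the proof is incomplete.
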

The remaining parts of this subsection are devoted to proving Lemma~\ref{equality-xi=xj}. 

\bigskip

The complement $M(\mathcal{\widetilde{B}}^{(1)}_q)$ is the set of $(x_1,\dots,x_m)\in\ZZ_q^m$ satisfying the following conditions:
\begin{align*}
&x_i=x_j=-1,\\ 
&x_s\neq 0,\ x_s\neq 1\ (s\in [m]),\\
&x_s\neq x_t\ (s,t\in [m],\ s\neq t,\ \{s,t\}\neq \{i,j\}),\\
&x_s\neq x_t+1\ (s,t\in [m],\ s<t,\ (s,t)\neq (i,j)),\\
&x_s\neq -x_t,\ x_s\neq -x_t+1\ (s,t\in [m],\ s\neq t).
\end{align*}

To prove Lemma~\ref{equality-xi=xj}, we fix the condition $x_i=x_j=-1$ and count the number of elements in $(x_1,\dots,x_m)\in M(\mathcal{\widetilde{B}}^{(1)}_q)$ using a modified version of the counting method described in Subsection~\ref{sec-TypeC-counting}.
The condition $x_i=x_j=-1$ means that the circle is labeled with both $i$ and $j$ at the left end of the lower left box.
Thus we consider the numbers $i$ and $j$ as the pair $(i,j)$.

\subsubsection{A proof of Lemma~\ref{equality-xi=xj} when $q$ is odd}\label{sec-quasi-poly-restxi=xj-odd}
Let $q$ be odd.
We create boxes and circles, and take the corresponding element $(x_1,\dots,x_m)\in M(\mathcal{\widetilde{B}}^{(1)}_q)$ as follows:
\begin{itemize}
\item Prepare $q-2m+3$ boxes side by side, $\displaystyle \frac{q+1}{2}-(m-1)$ on the upper side and $\displaystyle \frac{q+1}{2}-(m-1)$ on the lower side.
Place each of the numbers $1,\dots i-1, i+1, \dots,j-1,j+1,\dots,m$ in one of $q-2m+3$ boxes.
The upper left box does not contain a number since $x_s \neq 1$ for any $s\in [m]$.
\end{itemize}
\begin{itemize}
\item The box containing the pair $(i,j)$, which is the lower left box, cannot contain the numbers $i+1,\dots,j-1, j+1,\dots,m$.
Indeed, 
if the clockwise next circle is labeled with $s$ ($i+1 \leq s \leq m$), then this implies $x_j \neq -1$, a contradiction. 
\end{itemize}

We count the number of elements $(x_1,\dots,x_m)\in M(\mathcal{\widetilde{B}}^{(1)}_q)$.
There are $q-2m+2$ boxes that can contain the numbers $1,\dots,i-1$, except the upper left box.
For the remaining numbers, the numbers $i+1,\dots,j-1, j+1,\dots,m$ can be placed anywhere except the upper left and lower left boxes, so $q-2m+1$ boxes can be chosen.
Therefore, the desired number in this case is 
\begin{align*}
        \left|M(\mathcal{\widetilde{B}}^{(1)}_q)\right| = (q-2m+1)^{m-i-1}(q-2m+2)^{i-1}.
\end{align*}
For example, the following boxes and circles correspond to the element $(x_1,x_2,x_3,x_4,x_5)=(7,12,3,12,4)\in M(\mathcal{\widetilde{B}}^{(1)}_q)$ in the case of $m=5,q=13,i=2,j=4$:
\begin{center}
\scalebox{0.8}{
\begin{tikzpicture}[main/.style = {draw, circle, very thick}] 
\draw [very thick] (0,0) rectangle (4,1); \draw [very thick] (5,0) rectangle (9,1); \draw [very thick] (10,0) rectangle (14,1);
\draw [very thick] (0,1.5) rectangle (4,2.5); \draw [very thick] (5,1.5) rectangle (9,2.5); \draw [very thick] (10,1.5) rectangle (14,2.5);
\node[main](12) at (11.5,0.5) {{\large $1$}};
\node[main](8) at (1.5,0.5) {{\large $2$}};
\node[main](6) at (6.5,2) {{\large $3$}}; 
\node[main](2) at (7.5,2) {{\large $5$}};
\node[main](0) at (0.5,2) {{\color{white}\large $0$}};  
\node[main](0) at (1.5,2) {{\color{white}\large $0$}};  
\node[main](3) at (11.5,2) {\color{white}\large $0$};
\node[main](4) at (5.5,2) {{\color{white}\large $0$}}; 
\node[main](7) at (10.5,2) {{\color{white}\large $0$}};
\node[main](1) at (7.5,0.5) {{\color{white}\large $0$}};  
\node[main](11) at (5.5,0.5) {{\color{white}\large $0$}}; 
\node[main](10) at (6.5,0.5) {{\color{white}\large $0$}};
\node[main](9) at (10.5,0.5) {{\color{white}\large $0$}}; 
\end{tikzpicture}
}
\end{center}

\subsubsection{A proof of Lemma~\ref{equality-xi=xj} when $q$ is even}\label{sec-quasi-poly-restxi=xj-even}
Let $q$ be even.
We count the number of elements $(x_1,\dots,x_m)\in M(\mathcal{\widetilde{B}}^{(1)}_q)$ by dividing the cases by whether there exists an index $k$ such that $\displaystyle x_k=\frac{q}{2}$ or not.

(i)\ Suppose that there is no index $k\in[m]$ such that $\displaystyle x_k=\frac{q}{2}$.
Prepare $q-2m+2$ boxes, $\displaystyle \frac{q}{2}-(m-1)$ on the upper side and $\displaystyle \frac{q}{2}-(m-1)$ on the lower side, and one unlabeled circle corresponding to the element $\displaystyle \frac{q}{2}\in\ZZ_q$ on the right side of the boxes.
Put each of the numbers $1,\dots,i-1,i+1,\dots,j-1,j+1,\dots,m$ and the pair $(i,j)$ in the lower left box. 

As in the case when $q$ is odd, each of the numbers $i+1,\dots,j-1, j+1,\ldots,m$ cannot be placed in the same box as the pair $(i,j)$.

Each of $1,\dots,i-1$ can be placed into one of $q-2m+1$ boxes, except the upper left box.
Then each of the numbers $i+1,\dots,j-1, j+1,\dots,m$ can be placed into one of $q-2m$ boxes except the upper left box and the box containing $(i,j)$, which is the lower left one. 
Therefore, in this case, the desired number is
\begin{align*}
(q-2m)^{m-i-1}(q-2m+1)^{i-1}
\end{align*}

(i\hspace{-0.5mm}i)\ Suppose that there exists an index $k\in[m]$ such that $\displaystyle x_k=\frac{q}{2}$.
Create boxes and circles according to the following rules, and take the corresponding element $(x_1,\dots,x_m)\in M(\mathcal{\widetilde{B}}^{(1)}_q)$.
\begin{itemize}
\item Prepare $q-2m+4$ boxes, $\displaystyle \frac{q}{2}-(m-2)$ on the upper side and $\displaystyle \frac{q}{2}-(m-2)$ on the lower side, and one circle labeled with $k$ on the right side of the boxes.
Place each of the numbers that is not $i$, $j$, or $k$ into one of $q-2m+3$ boxes, except the upper left box.
\item As in the case when $q$ is odd, each of the numbers $i+1,\dots,j-1, j+1,\dots,m$ cannot be placed in the same box as the pair $(i,j)$.
\item The lower right box does not contain any numbers, since $x_k\neq -x_s+1$ for any $s\in [m]$ with $k\neq s$.
The upper right box does not contain any numbers larger than $k$, since $x_k\neq x_t+1$ for any $t \in [m]$ with $k<t$.
\end{itemize}

(i\hspace{-0.5mm}i-1)\ Let $1\leq k\leq i-1$.
Each of the numbers $1,\dots,k-1$ can be placed into one of $q-2m+2$ boxes, except the upper left and the lower right boxes.
Each of the numbers $k+1,\dots,i-1$ can be placed into one of $q-2m+1$ boxes, except the upper left, the upper right, and the lower right boxes.
For the remaining numbers, each of the numbers $i+1,\dots,j-1, j+1,\dots,m$ can be placed into one of $q-2m$ boxes, except the upper left, the upper right, and the lower right boxes and the box containing $(i,j)$.
Therefore, in this case, the desired number is
\[
(q-2m)^{m-i-1}(q-2m+1)^{i-k-1}(q-2m+2)^{k-1}
\]

(i\hspace{-0.5mm}i-2)\ Let $i+1\leq k\leq j-1$.
Each of the numbers $1,\dots,i-1$ can be placed into one of $q-2m+2$ boxes, except the upper left and the lower right boxes.
For the remaining numbers, each of the numbers $i+1,\dots,k-1$ can be placed into one of $q-2m+1$ boxes, except the upper left and the lower right boxes and the box containing $(i,j)$.
Each of the numbers $k+1,\dots,j-1, j+1,\dots,m$ can be placed into one of $q-2m$ boxes, except the upper left, the upper right, and the lower right boxes and the box containing $(i,j)$.
Therefore, in this case, the desired number is
\begin{align*}
(q-2m)^{m-k-1}(q-2m+1)^{k-i-1}(q-2m+2)^{i-1}.
\end{align*}

(i\hspace{-0.5mm}i-3)\ Let $j+1\leq k\leq m$.
Each of the numbers $1,\dots,i-1$ can be placed into one of $q-2m+2$ boxes, except the upper left and the lower right boxes.
Each of the numbers $i+1,\dots,j-1, j+1,\dots,k-1$ can be placed into one of $q-2m+1$ boxes, except the upper left and the lower right boxes and the box containing $(i,j)$.
Finally, each of the numbers $k+1,\dots,m$ can be placed into one of $q-2m$ boxes, except the upper left, the upper right, the lower right boxes and the box containing $(i,j)$.
Therefore, in this case, the desired number is
\begin{align*}
(q-2m)^{m-k}(q-2m+1)^{k-i-2}(q-2m+2)^{i-1}.
\end{align*}

From the discussion above, we have that
\begin{align*}
\left|M(\mathcal{\widetilde{B}}^{(1)}_q)\right|
=&\,T^{m-i-1}(T+1)^{i-1} + \sum_{k=1}^{i-1}T^{m-i-1}(T+1)^{i-k-1}(T+2)^{k-1}\\
&+\sum_{k=i+1}^{j-1}T^{m-k-1}(T+1)^{k-i-1}(T+2)^{i-1} + \sum_{k=j+1}^{m}T^{m-k}(T+1)^{k-i-2}(T+2)^{i-1}\\
=&\,T^{m-i-1}(T+1)^{i-1} + T^{m-i-1}\left((T+2)^{i-1}-(T+1)^{i-1}\right)\\
&+T^{m-j}(T+2)^{i-1}\left((T+1)^{j-i-1}-T^{j-i-1}\right)+(T+1)^{j-i-1}(T+2)^{i-1}\left((T+1)^{m-j}-T^{m-j}\right)\\
=&\,(T+1)^{m-i-1}(T+2)^{i-1},
\end{align*}
where $T:=q-2m$.

\subsection{Characteristic quasi-polynomial of restriction on $\{x_i+x_j=1\}$}\label{sec-quasi-poly-restxi=-xj+1}
Let $\Dc^{(4)}=\Dc_m^{\{x_i+x_j=1\}}$ for $1\leq i<j \leq m$, and we prove the fourth equality of Theorem \ref{thm-main-typeD} as follows.
\begin{Them}\label{chara-quasi-del-by-xi=-xj+1}
We have
\begin{align*}
\left|M(\Dc^{(4)}_q)\right|=
\begin{cases}
    (q-2m+2)^{i-1}(q-2m+3)^{j-i}(q-2m+4)^{m-j} - \\
    (q-2m+2)^{i-1}(q-2m+3)^{m-i-1} &\text{if $q$ is odd}, \\
    (q-2m+2)^{i-1}(q-2m+3)^{j-i-1}((q-2m+4)^{m-j+1}-(q-2m+3)^{m-j}) \\
    - (q-2m+2)^{i-1}(q-2m+3)^{m-i-1}&\text{if $q$ is even}
\end{cases}
\end{align*}
for any $q\in\ZZ$ with $q\gg 0$.
\end{Them}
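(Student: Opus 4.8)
The plan is to mimic the strategy of Subsection~\ref{sec-quasi-poly-restxi=xj} (the case $\{x_i-x_j=0\}$): transport $M(\Dc^{(4)}_q)$ to a restriction of type~B via the map $\phi$ of Theorem~\ref{thm-chara-quasi-TypeD}, and then extract the two summands of the claimed formula by a deletion--restriction argument together with an auxiliary box-counting lemma. Concretely, I would set $\Bc^{(4)}:=(\Bc_m\cup\{x_j=-1\})^{\{x_i+x_j=1\}}$ and consider $\phi$ and $\psi$ restricted to these fibers, first proving the bijection $|M(\Dc^{(4)}_q)|=|M(\Bc^{(4)}_{q+2})|$ and then evaluating the right-hand side.

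The decisive point in the bijection is an asymmetry created by the ordering $i<j$. For $\bm{x}\in M(\Dc^{(4)}_q)$ one always has $x_j\neq 0$: indeed $x_j=0$ forces $x_i=1$ by $x_i+x_j=1$, and then $x_i=x_j+1$ with $i<j$ violates the condition $x_s\neq x_t+1$; by contrast $x_i=0$ (hence $x_j=1$) is permitted. Since $x_j\neq 0$, the index $j$ is never the special zero index in the definition of $\phi$, so the $j$-th coordinate of $\phi(\bm{x})$ equals $x_j+1\in\{2,\dots,q\}$ and in particular $\phi(\bm{x})\notin H_{j,-1}[q+2]$. A short computation shows $\phi(\bm{x})\in H_{i,j,1}^+[q+2]$ in every case (whether $x_i=0$, or some $x_u=0$ with $u\notin\{i,j\}$, or no coordinate is zero), and the remaining membership conditions are checked exactly as in the proof of Theorem~\ref{thm-chara-quasi-TypeD}. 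For the reverse map I would observe that the adjoined hyperplane $\{x_j=-1\}$, i.e.\ $\{y_j=q+1\}$, removes precisely the type~B points whose $\psi$-image would fail $x_i\neq x_j+1$ (namely those with $y_j=q+1$, giving $x_j=0$, $x_i=1$), whereas the point $y_i=q+1$ is retained and corresponds to $x_i=0$. Thus $\psi$ restricts to a map $M(\Bc^{(4)}_{q+2})\to M(\Dc^{(4)}_q)$, and $\phi,\psi$ are mutually inverse.

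It then remains to evaluate $|M(\Bc^{(4)}_{q+2})|$. Using the deletion--restriction formula \eqref{eq:del-res} for the hyperplane $\{x_j=-1\}$ inside the flat $\{x_i+x_j=1\}$, I would write
\[|M(\Bc^{(4)}_q)|=|M((\Bc_m^{\{x_i+x_j=1\}})_q)|-|M(\widetilde{\Bc}^{(4)}_q)|,\qquad \widetilde{\Bc}^{(4)}:=(\Bc_m\cup\{x_j=-1\})^{\{x_i+x_j=1,\,x_j=-1\}}.\]
The first term is supplied by Theorem~\ref{typeB} evaluated at $q+2$, which reproduces the leading summand of the claim (using that $q$ and $q+2$ have the same parity). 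For the correction term, on the codimension-two flat one has $x_i=2$ and $x_j=-1$ fixed, and I would prove, by the boxes-and-circles method used for Lemma~\ref{equality-xi=xj}, the evaluation
\[|M(\widetilde{\Bc}^{(4)}_q)|=(q-2m)^{i-1}(q-2m+1)^{m-i-1}\]
for both parities of $q$; evaluated at $q+2$ this yields exactly $(q-2m+2)^{i-1}(q-2m+3)^{m-i-1}$, which is the subtracted term in each parity case. Substituting the two evaluations at $q+2$ and distinguishing the parity of $q$ in Theorem~\ref{typeB} then gives both cases of the statement.

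The main obstacle is twofold. First, pinning down the correct auxiliary hyperplane: one must recognize that the mismatch between $M(\Dc^{(4)}_q)$ and the naive type~B restriction $M((\Bc_m^{\{x_i+x_j=1\}})_{q+2})$ is caused solely by the points with $y_j=q+1$, and that adjoining $\{x_j=-1\}$ (rather than $\{x_i=-1\}$) is what repairs the bijection; because of the $i<j$ asymmetry this is easy to get backwards, and it is exactly the feature distinguishing this case from the symmetric case $\{x_i+x_j=0\}$ treated in Subsection~\ref{sec-quasi-poly-restxi=-xj}, where no correction appears. Second, the box count for $\widetilde{\Bc}^{(4)}$: the fixed values $x_i=2$ and $x_j=-1$ sit adjacent to the forbidden labels $0$ and $1$, so the admissible placements for the small indices $1,\dots,i-1$ differ from those for $i+1,\dots,m$, producing the split exponents $(q-2m)^{i-1}(q-2m+1)^{m-i-1}$; carefully tracking which boxes each label may occupy, and verifying that the count is the same for $q$ odd and even, is the technical heart of the argument.
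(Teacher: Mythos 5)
Your proposal is correct and follows essentially the same route as the paper: the same auxiliary arrangement $\Bc^{(4)}=(\Bc_m\cup\{x_j=-1\})^{\{x_i+x_j=1\}}$, the same restricted bijection $\phi,\psi$ (with the same observation that $x_j\neq 0$ while $x_i=0$ is permitted), the same deletion--restriction split, and the same auxiliary count $|M(\widetilde{\Bc}^{(4)}_q)|=(q-2m)^{i-1}(q-2m+1)^{m-i-1}$ proved by the boxes-and-circles method, which is exactly the paper's Lemma~\ref{equality-xi=-xj+1}.
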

\begin{proof}
Let $\Bc^{(4)} = (\Bc_m \cup \{x_j = -1\})^{\{x_i+x_j=1\}}$.
We define the maps $\phi : M(\Dc^{(4)}_q) \to M(\Bc^{(4)}_{q+2})$ and $\psi : M(\Bc^{(4)}_{q+2}) \to M(\Dc^{(4)}_q)$ in the same way. 
For $\bm{x} \in M(\Dc^{(4)}_q)$, we have $x_j \neq 0$. Indeed, if $x_j = 0$, then $x_i + x_j = 1$ implies $x_i = 1$, which contradicts the condition $x_i - x_j \neq 1$. Moreover, $\bm{x} \in \ZZ_q^m$ implies $\bm{x} + \bm{1} \in (\ZZ_{q+2} \setminus \{0,q+1\})^m$. Therefore, we have $\phi(\bm{x}) \notin H_{j,-1}[q+2]$. 
We can also  straightforwardly verify the remaining conditions for the well-definedness of $\phi$. 
Hence, this gives $\left|M(\Dc^{(4)}_q)\right| = \left|M(\Bc^{(4)}_{q+2})\right|$. 

On the other hand, by the deletion-restriction formula, we obtain 
\[|M(\Bc^{(4)}_q)| = |M((\Bc_m^{\{x_i+x_j=1\}})_q)| - |M(((\Bc_m \cup \{x_j = -1\})^{\{x_i+x_j=1, x_j=-1\}})_q)|.\] 

Therefore, we complete the proof of Theorem~\ref{chara-quasi-del-by-xi=-xj+1} by proving Lemma~\ref{equality-xi=-xj+1} below. 
\end{proof}
\begin{Lem}\label{equality-xi=-xj+1}
    Let $\mathcal{\widetilde{B}}^{(4)} = (\Bc_m \cup \{x_j = -1\})^{\{x_i+x_j=1, x_j=-1\}}$.
    Then we have 
    \begin{align*}
        \left|M(\mathcal{\widetilde{B}}^{(4)}_q)\right| = (q-2m)^{i-1}(q-2m+1)^{m-i-1}
    \end{align*}
for any $q \in \ZZ$ with $q \gg 0$. 
\end{Lem}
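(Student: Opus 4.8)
The plan is to prove Lemma~\ref{equality-xi=-xj+1} by the box-and-circle counting developed in Subsection~\ref{sec-TypeC-counting} and carried out for the sibling lemma in the proof of Lemma~\ref{equality-xi=xj}. First I would record the defining conditions of $M(\mathcal{\widetilde{B}}^{(4)}_q)$. Because the restriction is taken along $\{x_i+x_j=1\}$ and $\{x_j=-1\}$, combining the two relations forces $x_i=2$ and $x_j=-1$, so that $M(\mathcal{\widetilde{B}}^{(4)}_q)$ is the set of $(x_1,\dots,x_m)\in\ZZ_q^m$ satisfying
\begin{align*}
&x_i=2,\ x_j=-1,\\
&x_s\neq 0,\ x_s\neq 1\ (s\in [m]),\\
&x_s\neq x_t\ (s,t\in [m],\ s\neq t),\\
&x_s\neq x_t+1\ (s,t\in [m],\ s<t),\\
&x_s\neq -x_t,\ x_s\neq -x_t+1\ (s,t\in [m],\ s\neq t,\ \{s,t\}\neq \{i,j\}),
\end{align*}
where the pair $\{i,j\}$ is omitted from the last family precisely because $x_i+x_j=1$ is the hyperplane along which we restrict (indeed $x_i=-x_j+1$ would otherwise be violated). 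In the combinatorial model this means that the circle labeled $i$ is pinned to the value $2$ (next to the forbidden values $0,1$ at the start of the upper row) and the circle labeled $j$ to the value $-1$ (the left end of the lower-left box), and it remains to distribute the $m-2$ numbers in $[m]\setminus\{i,j\}$ among the boxes.

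For $q$ odd I would place the two pinned circles and then count, for each remaining number $s$, its admissible boxes. The governing asymmetry is the adjacency condition at the pinned value $x_i=2$: a number $s<i$ must avoid $x_s=x_i+1$, which rules out one extra box for it, whereas a number $s>i$ with $s\neq j$ has no such obstruction on that side. Tracking this together with the forbidden values $0,1$ and the circles opposite the pinned $i$ and $j$, I expect each of the $i-1$ numbers below $i$ to admit $q-2m$ boxes and each of the $m-i-1$ numbers above $i$ (other than $j$) to admit $q-2m+1$; as in Lemma~\ref{equality-xi=xj}, the position of $j$ among the large indices produces no further split. Multiplying gives $(q-2m)^{i-1}(q-2m+1)^{m-i-1}$.

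For $q$ even I would adjoin the extra unlabeled circle of value $q/2$ and split the count according to whether some coordinate equals $q/2$, mirroring Subsection~\ref{sec-quasi-poly-restxi=xj-even}. In the sub-case where no $x_k=q/2$, the extra circle removes one further box from every number; in the sub-cases where $x_k=q/2$ for a (necessarily unique) index $k$, I would seat $k$ on the middle circle and treat the three ranges $k<i$, $i<k<j$, and $j<k$ separately, since the boxes forbidden to the other numbers depend on where $k$ sits. Summing the no-$q/2$ contribution with the three sums over $k$ and collapsing them by the same telescoping of geometric sums with consecutive bases differing by one (used at the end of Subsection~\ref{sec-quasi-poly-restxi=xj-even}) should again yield $(q-2m)^{i-1}(q-2m+1)^{m-i-1}$, confirming that the answer is parity-independent.

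The main obstacle is the bookkeeping in the even case: for each placement of the special index $k$ relative to $i$ and $j$ one must correctly identify which of the distinguished boxes (upper-left, upper-right, lower-right, and the boxes holding the pinned circles) are excluded for each remaining number, and then verify that the resulting alternating sums telescope exactly to the closed form. A secondary point, to be checked at the outset, is that the pinned circle $j$ at value $-1$ together with the removal of the single constraint $x_i+x_j=1$ does not lower the admissible-box counts of the other numbers below $q-2m$ and $q-2m+1$; this benignity of $j$ is what makes its position invisible in the final product, as reflected by the absence of a $j$-dependent exponent.
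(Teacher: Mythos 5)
Your proposal follows essentially the same route as the paper's proof: pin $x_i=2$ and $x_j=-1$ in the box-and-circle model, count $q-2m$ admissible boxes for each label below $i$ and $q-2m+1$ for each label above $i$ other than $j$ when $q$ is odd, and for even $q$ split according to whether some $x_k=q/2$ exists and, if so, according to the position of $k$ relative to $i$ and $j$, finally collapsing the sums by telescoping. The intermediate counts you state (including the $(q-2m-1)^{i-1}(q-2m)^{m-i-1}$-type contribution when no coordinate equals $q/2$) agree with the paper's, so this is the same argument in outline.
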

The remaining parts of this subsection are devoted to proving Lemma~\ref{equality-xi=-xj+1}. 

\bigskip

The complement $M(\mathcal{\widetilde{B}}_q^{(4)})$ is the set of $(x_1,\dots,x_m)\in\ZZ_q^m$ satisfying the following conditions:
\begin{align*}
&x_i=2, x_j = -1\\
&x_s\neq 0,\ x_s\neq 1\ (s\in [m]),\\
&x_s\neq x_t\ (s,t\in [m],\ s\neq t),\\
&x_s\neq x_t+1\ (s,t\in [m],\ s<t),\\
&x_s\neq -x_t,\ x_s\neq -x_t+1\ (s,t\in [m],\ s\neq t,\ \{s,t\}\neq \{i,j\}).
\end{align*}
We fix the condition $x_i=2, x_j = -1$ and count the number of elements in $(x_1,\dots,x_m)\in M(\mathcal{\widetilde{B}}^{(4)}_q)$ using a modified version of the counting method. 

\subsubsection{A proof of Lemma~\ref{equality-xi=-xj+1} when $q$ is odd}
Let $q$ be odd.
The condition $x_i=2$ means that the circle labeled with $i$ comes after two unlabeled circles in the upper left box, 
and the condition $x_j=-1$ means that the circle labeled with $j$ is placed at the left end of the lower left box. 
\begin{itemize}
\item Prepare $q-2m+1$ boxes side by side, $\displaystyle \frac{q+1}{2}-m$ on the upper side and $\displaystyle \frac{q+1}{2}-m$ on the lower side. Place each of the numbers $1,\dots, i-1, i+1, \dots, j-1, j+1,\ldots,m$ in one of $q-2m+1$ boxes. 

%
%
\item There are $q-2m$ boxes that can contain the numbers $1,\ldots,i-1$, which are the boxes except the upper left one. 
\item Since $i$ is placed in the upper left box, the number $k$ with $i<k<j$ can be placed there. 
Hence, there are $q-2m+1$ boxes that can contain the numbers $i+1,\ldots,j-1$. 
\item The number $k$ with $k>j$ can be placed in the lower left box since there is an unlabeled circle just before the circle labeled with $j$. 
In addition, we can place $k$ at the right side of the circle labeled with $i$ in the upper left box. 
Note that the left side of the circle labeled with $i$ in the upper left box is forbidden for $k$ to be placed. 
Indeed, if $k$ is placed there, then we have $x_i \neq 2$, a contradiction. 
Hence, there are $q-2m+1$ boxes that can contain the numbers $j+1,\ldots,m$. 
\end{itemize}

There are $q-2m$ boxes that can contain the numbers $1,\dots,i-1$, except the upper left box.
For the remaining numbers, the numbers $i+1,\dots,j-1, j+1,\dots,m$ can be placed anywhere, so $q-2m+1$ boxes can be chosen.
Therefore, the desired number in this case is 
\begin{align*}
    \left|M(\mathcal{\widetilde{B}}^{(4)}_q)\right| = (q-2m)^{i-1}(q-2m+1)^{m-i-1}. 
\end{align*}
For example, the following boxes and circles correspond to the element $(x_1,x_2,x_3,x_4,x_5)=(4,2,8,14,9)\in M(\widetilde{\Bc}^{(4)}_q)$ in the case of $m=5,q=15,i=2,j=4$:
\begin{center}
\scalebox{0.8}{
\begin{tikzpicture}[main/.style = {draw, circle, very thick}] 
\draw [very thick] (0,0) rectangle (4,1); \draw [very thick] (5,0) rectangle (9,1); \draw [very thick] (10,0) rectangle (14,1);
\draw [very thick] (0,1.5) rectangle (4,2.5); \draw [very thick] (5,1.5) rectangle (9,2.5); \draw [very thick] (10,1.5) rectangle (14,2.5);
\node[main](12) at (12.5,0.5) {{\large $3$}};
\node[main](5) at (6.5,2) {{\large $1$}};
\node[main](6) at (1.5,2) {{\color{white}\large $0$}}; 
\node[main](6) at (2.5,2) {{\large $2$}}; 
\node[main](10) at (1.5,0.5) {{\large $4$}};
\node[main](2) at (11.5,0.5) {{\large $5$}};
\node[main](0) at (0.5,2) {{\color{white}\large $0$}};  
\node[main](3) at (12.5,2) {\color{white}\large $0$};
\node[main](4) at (5.5,2) {{\color{white}\large $0$}}; 
\node[main](7) at (10.5,2) {{\color{white}\large $0$}};
\node[main](1) at (11.5,2) {{\color{white}\large $0$}};  
\node[main](11) at (5.5,0.5) {{\color{white}\large $0$}}; 
\node[main](9) at (10.5,0.5) {{\color{white}\large $0$}}; 
\node[main](9) at (6.5,0.5) {{\color{white}\large $0$}}; 
\node[main](9) at (2.5,0.5) {{\color{white}\large $0$}}; 
\end{tikzpicture}
}
\end{center}

\subsubsection{A proof of Lemma~\ref{equality-xi=-xj+1} when $q$ is even}\label{sec-count-method-x_i+x_j=1-even}
Let $q$ be even.

(i)\ Suppose that there is no index $k\in[m]$ such that $\displaystyle x_k=\frac{q}{2}$.
Prepare $q-2m$ boxes, $\displaystyle \frac{q}{2}-m$ on the upper side and $\displaystyle \frac{q}{2}-m$ on the lower side, 
and one circle corresponding to the element $\displaystyle \frac{q}{2}\in\ZZ_q$ on the right side of the boxes. 
In this case, we see that the possible ways to place those numbers are almost the same as the case when $q$ is odd, but we may just replace $q$ with $q-1$ because of the number of boxes. 
Therefore, in this case, the desired number is 
\[
(q-2m-1)^{i-1}(q-2m)^{m-i-1}.
\]

\bigskip

(i\hspace{-0.5mm}i)\ Suppose that there exists an index $k\in[m]$ such that $\displaystyle x_k=\frac{q}{2}$.
Create boxes and circles according to the following rules and take the corresponding element $(x_1,\dots,x_m)\in M(\mathcal{\widetilde{B}}_q^{(4)})$.
\begin{itemize}
\item Prepare $q-2m+2$ boxes, $\displaystyle \frac{q}{2}-(m-1)$ on the upper side and $\displaystyle \frac{q}{2}-(m-1)$ on the lower side, 
and one circle labeled with $k$ on the right side of the boxes. 
\item The lower right box does not contain any numbers, except as previously noted. 
Indeed, if there is a labeled circle in the lower right box in such cases, then the clockwise next circle from the opposite circle of the rightmost circle, say labeled with $s$, is the circle labeled with $k$. 
In other words, $x_k=-x_s+1$ holds, a contradiction. 
\item The upper right box does not contain any numbers larger than $k$.
Indeed, if the upper right box contains a number larger than $k$, then the rightmost circle in the upper right box is a labeled circle, say labeled with $s$ ($k<s$).
The circle clockwise preceding the circle labeled with $k$ has the label greater than $k$.
In other words, $x_k=x_s+1$ holds, a contradiction.
\end{itemize}
We divide the discussions into which range $k$ belongs. 
\begin{itemize}
\item[(i\hspace{-0.5mm}i-1)] Let $1 \leq k \leq i-1$. 
\begin{itemize}
\item There are $q-2m$ boxes that can contain the numbers $1,\ldots,k-1$, which are the boxes except the upper left and lower right ones. 
\item There are $q-2m-1$ boxes that can contain the numbers $k+1,\ldots,i-1$, which are the boxes except the upper left, the upper right and the lower right ones. 
\item Similarly to the case when $q$ is odd, 
there are $q-2m$ boxes that can contain $i+1,\ldots,j-1, j+1,\ldots,m$. 
\end{itemize}
Hence the desired number is 
\[
(q-2m-1)^{i-k-1}(q-2m)^{m+k-i-2}
\] 
in the case of $1 \leq k \leq i-1$. 

\item[(i\hspace{-0.5mm}i-2)] Let $i+1 \leq k \leq j-1$. 
\begin{itemize}
\item There are $q-2m$ boxes that can contain the numbers $1,\ldots,i-1$, which are the boxes except the upper left and lower right ones. 
\item Similarly, there are $q-2m+1$ boxes that can contain the numbers $i+1,\ldots,k-1$. 
\item There are $q-2m$ boxes that can contain the numbers $k+1,\ldots,j-1, j+1,\ldots,m$. 
\end{itemize}
Hence the desired number is 
\[
(q-2m)^{m+i-k-2}(q-2m+1)^{k-i-1}
\]
in the case of $i+1 \leq k \leq j-1$. 

\item[(i\hspace{-0.5mm}i-3)] Let $j+1 \leq k \leq m$. 
\begin{itemize}
\item There are $q-2m$ boxes that can contain the numbers $1,\ldots,i-1$. 
\item There are $q-2m+1$ boxes that can contain the numbers $i+1,\ldots,j-1, j+1,\ldots,k-1$. 
\item There are $q-2m$ boxes that can contain the numbers $k+1,\ldots,m$. 
\end{itemize}
Hence the desired number is 
\[
(q-2m)^{m+i-k-1}(q-2m+1)^{k-i-2}
\] 
in the case of $j+1 \leq k \leq m$. 
\end{itemize}

From the discussion above, we have that
\begin{align*}
\left|M(\mathcal{\widetilde{B}}^{(4)}_q)\right|
=&\,(T-1)^{i-1}T^{m-i-1} + \sum_{k=1}^{i-1}(T-1)^{i-k-1}T^{m+k-i-2}\\
&+\sum_{k=i+1}^{j-1}T^{m+i-k-2}(T+1)^{k-i-1} + \sum_{k=j+1}^{m}T^{m+i-k-1}(T+1)^{k-i-2}\\
=&\,(T-1)^{i-1}T^{m-i-1} + T^{m-i-1}\left(T^{i-1}-(T-1)^{i-1}\right)\\
&+T^{m+i-j-1}\left((T+1)^{j-i-1}-T^{j-i-1}\right)+T^{i-1}(T+1)^{j-i-1}\left((T+1)^{m-j}-T^{m-j}\right)\\
=&\,T^{i-1}(T+1)^{m-i-1},
\end{align*}
where $T:=q-2m$.


\section{Proofs of corollaries}\label{sec:corollaries}
In this section, we prove Corollaries~\ref{period-collapse-1}, \ref{period-collapse-2} and \ref{Cor-latticeBC} by using Theorems \ref{thm-main-typeC} and \ref{thm-main-typeD}. 

For the proofs of Corollaries~\ref{period-collapse-1} and \ref{period-collapse-2}, we may check the period collapse of the restrictions instead of the deletions. 
\begin{proof}[Proof of Corollary~\ref{period-collapse-1}]
(1) If $H=\{2x_i=0\}$ or $H=\{2x_i=1\}$, then we can easily see that $|M(\mathcal{C}_m^H)_q|$ never becomes a polynomial by Theorem~\ref{thm-main-typeC}. 
Moreover, if $H=\{x_i-x_j=0\}$ ($1 \leq i < j \leq m$) or $H=\{x_i-x_j=1\}$ ($1 \leq i < j \leq m)$, then $|M(\mathcal{C}_m^H)_q|$ becomes a polynomial as Theorem~\ref{thm-main-typeC} shows. 
%

Let $H=\{x_i+x_j=0\}$ for $1 \leq i < j \leq m$. Suppose that $|M(\mathcal{C}_m^H)_q|$ becomes a polynomial. Then the equality
\begin{align*}
    (q-2m)^{m-j}(q-2m+1)^{j-i}(q-2m+2)^{i-1} &= (q-2m)^{m-j}(q-2m+1)^{j-i-1}(q-2m+2)^{i} \\
    \Longleftrightarrow q-2m+1 &= q-2m+2
\end{align*}
should be satisfied, a contradiction.

Let $H=\{x_i+x_j=1\}$ for $1 \leq i < j \leq m$. Suppose that $|M(\mathcal{C}_m^H)_q|$ becomes a polynomial. Then the equality 
\begin{align*}
    (q-2m)^{i-1}(q-2m+1)^{j-i}(q-2m+2)^{m-j} &= (q-2m)^{i}(q-2m+1)^{j-i-1}(q-2m+2)^{m-j} \\
    \Longleftrightarrow q-2m+1 &= q-2m 
\end{align*}
should be satisfied, a contradiction.

\noindent
(2) Fix $H \in \Dc_m$. From the proofs of Theorems~\ref{chara-quasi-del-by-xi=-xj}, \ref{chara-quasi-del-by-xi=xj+1}, \ref{chara-quasi-del-by-xi=xj} and \ref{chara-quasi-del-by-xi=-xj+1}, we see that the characteristic quasi-polynomial of $\Dc_m \setminus \{H\}$ becomes a polynomial if and only if that of $\Bc_m \setminus \{H\}$ does. Therefore, by \cite[Corollary 1.4]{HN2024}, the characteristic quasi-polynomial of $\Dc_m \setminus \{H\}$ becomes a polynomial if and only if $H$ is one of the following:
\begin{itemize}
    \item $H=\{x_i-x_{m+1-i}=0\}$ for $1 \leq i \leq m$;
    \item $H=\{x_i-x_j=1\}$ for $1 \leq i<j \leq m$; 
    \item $H=\{x_1+x_j=0\}$ for $2 \leq j \leq m$; 
    \item $H=\{x_i+x_m=1\}$ for $1 \leq i \leq m-1$.  
\end{itemize}
\end{proof}

\begin{proof}[Proof of Corollary~\ref{period-collapse-2}]
(1) Let $\Cc=\mathcal{C}_m$. If $H,H^{\prime} \in \Cc$ are parallel each other, 
since $(\Cc\setminus \{H\})^{H^{\prime}}=\Cc^{H^{\prime}}$, we see that \[
|M(\Cc_q)|=|M(\Cc_q\setminus \{H \})| - |M(\Cc_q^H )|=|M(\Cc_q\setminus \{H,H^{\prime}\})| - |M(\Cc_q^H)| - |M(\Cc_q^{{H^{\prime}}})|. 
\]
Hence, period collapse in $|M(\Cc_q\setminus \{H,H^{\prime}\})|$ is equivalent to period collapse in $|M(\Cc_q^{H})| + |M(\Cc_q^{H^{\prime}})|$ when period collapse occurs in $|M(\Cc_q)|$. 

In the case of the pair $H=\{2x_i=0\}$ and $H^{\prime}=\{2x_i=1\}$ for $1 \leq i \leq m$, we see the following:
\begin{align*}
    &(q-2m)^{m-i}(q-2m+1)^{i-1} + (q-2m)^{i-1}(q-2m+1)^{m-i} = 2(q-2m)^{m-i}(q-2m+1)^{i-1} \\
    \Longleftrightarrow \ &(q-2m)^{i-1}(q-2m+1)^{m-i} = (q-2m)^{m-i}(q-2m+1)^{i-1} \\
    \Longleftrightarrow \ &m-i = i-1 \Longleftrightarrow 2i = m+1
\end{align*}

In the case of the pair $H=\{x_i-x_j=0\}$ and $H^{\prime}=\{x_i-x_j=1\}$ for $1 \leq i<j \leq m$, it is clear that $|M(\Cc_q^{H})| + |M(\Cc_q^{H^{\prime}})|$ becomes a polynomial.

In the case of the pair $H=\{x_i+x_j=0\}$ and $H^{\prime}=\{x_i+x_j=1\}$ for $1 \leq i<j \leq m$, we see the following:
\begin{align*}
    &(q-2m)^{m-j}(q-2m+1)^{j-i}(q-2m+2)^{i-1} + (q-2m)^{i-1}(q-2m+1)^{j-i}(q-2m+2)^{m-j} \\
    &= (q-2m)^{m-j}(q-2m+1)^{j-i-1}(q-2m+2)^{i} + (q-2m)^{i}(q-2m+1)^{j-i-1}(q-2m+2)^{m-j} \\
    \Longleftrightarrow \ &(q-2m)^{m-j}(q-2m+1)^{j-i-1}(q-2m+2)^{i-1} = (q-2m)^{i-1}(q-2m+1)^{j-i-1}(q-2m+2)^{m-j} \\
    \Longleftrightarrow \ &i-1 = m-j \Longleftrightarrow i+j = m+1, 
\end{align*}
as desired.

\noindent
(2) Fix $H,H^{\prime} \in \Dc_m$ which are parallel each other. As in Corollary~~\ref{period-collapse-1}, the characteristic quasi-polynomial of $\Dc_m \setminus \{H,H^{\prime}\}$ becomes a polynomial if and only if that of $\Bc_m \setminus \{H,H^{\prime}\}$ does. Therefore, by \cite[Corollary 1.6]{HN2024}, the characteristic quasi-polynomial of $\Dc_m \setminus \{H,H^{\prime}\}$ becomes a polynomial if and only if one of the following is satisfied: 
\begin{itemize}
    \item $H=\{x_i-x_{m+1-i}=0\}$ and $H^{\prime}=\{x_i-x_{m+1-i}=1\}$ for $1 \leq i \leq m$; 
    \item $H=\{x_i+x_{m+1-i}=0\}$ and $H^{\prime}=\{x_i+x_{m+1-i}=1\}$ for $1 \leq i \leq m$. 
\end{itemize}
\end{proof}

For an affine arrangement $\mathcal{A}$ over a field, the intersection poset $L(\mathcal{A})$ is defined by $L(\mathcal{A}):=\{\bigcap_{H\in\mathcal{B}} H\mid \mathcal{B}\subseteq \mathcal{A},\ \bigcap_{H\in\mathcal{B}} H\neq\emptyset\}$.
Then $L(\mathcal{A})$ forms a poset by reverse inclusion.
We say that posets $(P,\le_P)$ and $(Q,\le_Q)$ are isomorphic if there exists a bijection $\varphi \colon P \to Q$ such that for all $X,Y \in P$, $x \le_P y$ is equivalent to $\varphi(x) \le_Q \varphi(y)$.
\begin{proof}[Proof of Corollary \ref{Cor-latticeBC}]
Assume to the contrary that the intersection posets of $\Bc_m$ and $\Cc_m$ were order isomorphic. Then the intersection poset of the restriction of each hyperplane in $\Bc_m$ would also be isomorphic to that of the corresponding restriction in $\Cc_m$.
The characteristic polynomial is determined solely by the intersection poset.
Although Theorem~\ref{typeB} shows that the polynomial $(T+1)^{j-3}(T+2)^{m-j}(T^2+3T+3)$ appears as the characteristic polynomial of the restriction of ${x_2-x_j=0}$ in $\Bc_m$ for some $2<j \leq m$, every characteristic polynomial of the restriction in $\Cc_m$ decomposes only into linear factors $(T+a)$ with $a\in \{0,1,2\}$, yielding a contradiction.
\end{proof}


\begin{thebibliography}{10}
\bibitem{Athanasiadis1996} C. A. Athanasiadis. Characteristic polynomials of subspace arrangements and finite fields. \textit{Adv. Math.}, {\bf 122}, 193--233, (1996).
\bibitem{Athanasiadis1999} C. A. Athanasiadis. Extended Linial hyperplane arrangements for root systems and a conjecture of Postnikov and Stanley. \textit{J. Algebraic Combin.}, {\bf 10} (3), 207--225, (1999).
\bibitem{HN2024} A. Higashitani and N. Nakashima. Characteristic quasi-polynomials of deletions of Shi arrangements of type B and their period collapse. arXiv:2405.20102.  
\bibitem{HTY2023} A. Higashitani, T. N. Tran and M. Yoshinaga. Period Collapse in Characteristic Quasi-Polynomials of Hyperplane Arrangements. \textit{Int. Math. Res. Not. IMRN}, {\bf 10}, 8934--8963, (2023).
\bibitem{KTT08} H. Kamiya, A. Takemura, and H. Terao. Periodicity of hyperplane arrangements with integral coefficients modulo positive integers. \textit{J. Algebr. Comb.}, {\bf 27} (3), 317--330, (2008).
\bibitem{KTT11} H. Kamiya, A. Takemura, and H. Terao. Periodicity of non-Central integral arrangements modulo positive integers. \textit{Ann. Comb.}, {\bf 15} (3), 449--464, (2011).
\bibitem{MN2024} Y. Mori and N. Nakashima. Characteristic quasi-polynomials for deformations of Coxeter arrangements of types A, B, C, and D. arXiv:2208.00735. 
\bibitem{Yoshinaga2018} M. Yoshinaga. Worpitzky partitions for root systems and characteristic quasi-polynomials. \textit{Tohoku Math. J.}, {\bf 70}, 39--63, (2018).

\end{thebibliography}
\end{document}